\newtheorem{theorem}{Theorem}[section]
\newtheorem{lemma}[theorem]{Lemma}
\theoremstyle{definition}
\newtheorem{definition}[theorem]{Definition}
\theoremstyle{proposition}
\newtheorem{proposition}[theorem]{Proposition}
\theoremstyle{corollary}
\newtheorem{corollary}[theorem]{Corollary}
\numberwithin{equation}{section}
\def\fg{\mathfrak{g}}
\begin{document}

\title[Binomial Transforms]%
{Binomial Arrays and \\ Generalized Vandermonde Identities} 


\author{Robert W. Donley, Jr.}
\address{Department of Mathematics and Computer Science,  Queensborough Community College (CUNY), Bayside, NY 11364, USA}
\curraddr{}
\email{RDonley@qcc.cuny.edu}


\subjclass[2010]{Primary 05A19 15B36 05A10 11B37 81R05}
\keywords{binomial transform, hockey stick rule, Vandermonde identity, discrete convolution, Catalan number, Clebsch-Gordan coefficient}

\begin{abstract}
In previous work on Clebsch-Gordan coefficients, certain remarkable hexagonal arrays of integers are constructed that display behaviors found in Pascal's Triangle.  We explain these behaviors further using the binomial transform and discrete convolution. Here we begin by introducing the notion of a binomial array and develop several ``hockey stick" rules.
Then we give an algorithm that generalizes the classical Vandermonde Identity; this produces infinite families of summation formulas, which we use to expand and prove certain combinatorial identities for  the Catalan numbers.  Finally, we recast the theory in terms of the finite-dimensional representation theory of $SL(2, F)$.
\end{abstract}

\maketitle


\section{Introduction}

Recent work of the author on Clebsch-Gordan coefficients allows for wide application of the methods of combinatorial analysis, driven in particular by generating functions, recurrences, finite group actions, and Pascal's triangle.  The basic combinatorial rules established in \cite{Do} receive a somewhat thorough application in \cite{D2}, save for the orthogonality relations. The goal of this work is to draw further analogies from Pascal's triangle, in particular characterizing the first orthogonality relation as a special case of discrete convolution.  

Coupling this observation with methods of Dwyer (\cite{DM}, \cite{Dw}) and  Frankel (\cite{EF}),  we also explain the invariance property of orthogonality relations under uniform inward or outward shifting of columns. This method yields vast extensions of the summation formulas associated to Pascal's recurrence.

Key to understanding these phenomena is the notion of  a binomial array;  this concept extends both Pascal's triangle and forward difference tables and has properties in accordance with  Riordan arrays \cite{Ba}.  In turn, the binomial array is a natural implementation of the generalized binomial transform.  Instead of producing a new sequence from a given sequence, a consideration of all possible binomial transforms and their inverses produce an array of values.  The binomial transform here may be regarded as iteration of Pascal's recurrence and its inverse as iteration of alternating partial sums.

Examples of these arrays are both old and well-represented in the literature, including
\begin{enumerate}
\item the extended Pascal triangle (Figure 1; compare with Table 1 in \cite{EF}), binomial coefficients, and cumulative (or figurate) numbers,
\item difference tables and the calculus of finite differences (for instance, \cite{GN}, Ch. 5.3),
\item moment and correlation calculations in statistics (\cite{DM}, \cite{Dw}),
\item Catalan numbers (A000108) and Catalan triangles (A009766 - in order of increasing detail, see \cite{Sh}, \cite{Fo}, \cite{KM}, and Figure 5 below; see also Figure 3.2 in \cite{GL}),
\item Bell numbers (A000110) and Bell's triangle (A011971),
\item Motzkin numbers (A001006) as finite differences of Catalan numbers (\cite{St2}, p. 126.),
\item rencontres numbers (A008290) and derangement triangles (\cite{Sp}),
\item Clark's triangles (A090850), and
\item hexagons of Clebsch-Gordan coefficients (\cite{Do}, \cite{D2}; see also \cite{Vi}, III.8.7, p. 188, for finite differences).
\end{enumerate}
Here entries of the OEIS (\cite{Sl}) are referenced as usual with digits following a leading A.

\begin{table}
\caption{Shapiro's Catalan Triangle for $B_{n, k}$}
\label{tab:1}
\begin{tabular}{|p{1cm}|p{1cm}|p{1cm}|p{1cm}|p{1cm}|p{1cm}|p{1cm}|}
\hline\noalign{}
 $n\backslash k$ & 1 & 2 & 3 & 4 & 5 & 6\\
 \noalign{}\hline\noalign{}
1 & 1 &  & & & &   \\
 2 & 2 & 1 & & & &   \\
 3 & 5 & 4 & 1 & & &   \\
 4 & 14 & 14 & 6 & 1 & &   \\
 5 & 42 & 48 & 27 & 8  & 1 &   \\
 6 & 132 & 165 & 110 & 44 & 10 & 1   \\
\noalign{}\hline\noalign{}
\end{tabular}
\end{table}

In addition to providing another setting for Clebsch-Gordan coefficients, these methods allow an extension of summation formulas due to Shapiro.  In particular, if we define \begin{equation*}B_{n, k}=\frac{k}{n}\begin{pmatrix}2n \\ n-k\end{pmatrix},\end{equation*} then an extension of the Catalan numbers is  given by Table 1.  These numbers satisfy the equation
\begin{equation*}B_{n, k}=B_{n-1, k-1}+ 2B_{n-1, k}=B_{n-1, k+1},\end{equation*}
which may be interpreted as the square of Pascal's recurrence.   This triangle occurs as a fundamental example of a Riordan array \cite{Ba}, and it is known (\cite{Sh}; \cite{Ko}, Theorem 13.1) that the length squared of any row is a Catalan number, as is the dot product of any two rows.  Extensions of these two results (Corollaries 8.4 and 8.5) and placement into a wider context, adjunct to the Riordan array model,  are two concerns of  the present work.

Other new features include
\begin{enumerate}
\item definitions of binomial array and  the general binomial transform (section 2), 
\item symmetries of binomial arrays (section 3),
\item nine hockey stick rules - six long and three short (section 4),
\item six Vandermonde identities for convolution (sections 5),
\item examples and two families generalizing the sequence of Catalan numbers (sections 6 through 8), and
\item an identification of the model in terms of the representation theory for $SL(2, F)$  (section 9).
\end{enumerate}

For notation, we remind the reader of the usual conventions for binomial coefficients, which may be invoked without comment.  For any integers $k$ and $n$, 
 \begin{equation}\begin{pmatrix} n \\ k\end{pmatrix}= \begin{cases}{\frac{n!}{k!(n-k)!}} & 0\le k\le n,\\
 \frac{(-1)^k(-n+k-1)!}{k!(-n-1)!} & n<0,\ k\ge 0,\  and\\
 0 & otherwise.\end{cases}\end{equation}
 See Figure 1, with columns indexed by $n$ and rows indexed by $k\ge 0.$ While most of our examples  require only the integers, we will  assume constants lie in a field $F$ of characteristic zero.
 
 All figures in this work were created using Excel.

\section{Binomial Arrays}
Let $V=F[[x]]$ be the algebra of power series in one variable with coefficients in $F$, and let $\mathcal{B}=\{1, x, x^2, \dots\}$ be a basis for the subalgebra of polynomials in  $V.$ Every element in $V$ can be uniquely represented as a formal infinite sum of elements in $\mathcal{B}.$  Using this representation, $V$ is isomorphic to the ring of sequences $\{a_i\}_{i=0}^\infty.$

\begin{definition}
Suppose $\{a_i\}_{i=0}^\infty$ is a sequence. We define the {\bf binomial array}  $B(a_i)$ to be the lower half-plane array defined as follows:
\begin{enumerate}
\item All entries in the top line equal $a_0.$ We index  the top line as row 0 and all rows below in increasing order,
\item The entry in the zeroth column and $i$-th row is $a_i$, and
\item All other entries are defined using Pascal's recurrence;  that is, if the entry in row $k$ and column $n$ is denoted by $a_{k, n}$ then 
\begin{equation*}a_{k, n+1}= a_{k-1, n}+a_{k, n}.\end{equation*}
\end{enumerate}
If $p(x)=a_0+a_1x+\dots+a_mx^m$ is a polynomial of degree $m$, we define $B(p(x))$ to be the binomial array  $B(a_i)$ with $a_i=0$ for all $i>m.$  For general $a_i$ with generating function $p(x),$ we may likewise use $B(p(x))$ to denote $B(a_i)$ when convenient.
\end{definition}
Of course, $a_{i, 0}=a_i.$  Pascal's recurrence allows us to extend the middle column to both the left and right sides of $B(p(x))$.  The recurrence occurs as a capital L pattern; the vertical entries sum to define the toe of the L.  Knowing any two entries on the L allows calculation of the third.

\begin{figure}
\includegraphics[scale=.6]{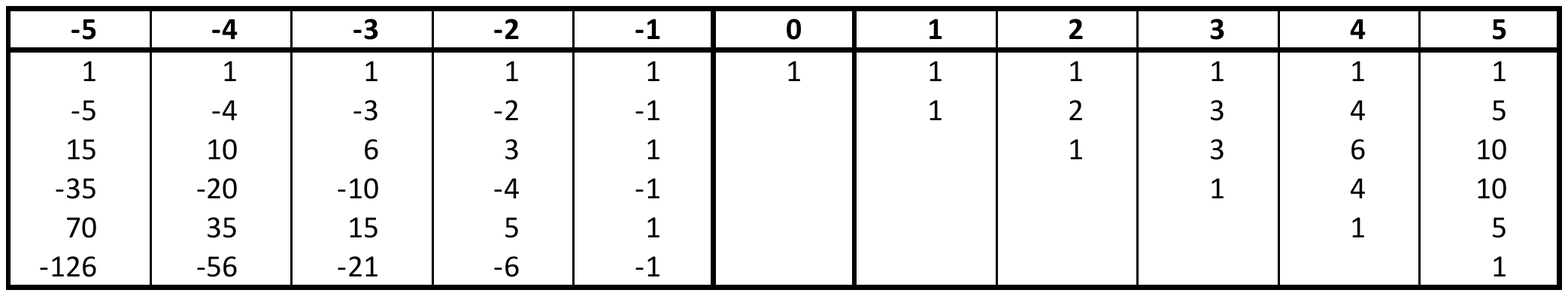}
\caption{Extended Pascal's Triangle}
\label{}
\end{figure}

\begin{definition}
For $n\ge 0,$ we define the  {\bf (extended) binomial transform} $B^na_{k}$ of $a_i$ to be the sum
\begin{equation}B^na_{k}=a_{k, n}=\sum\limits_{i=0}^n \begin{pmatrix} n\\ i\end{pmatrix}a_{k-i}.\end{equation}
\end{definition}
Several definitions of binomial transform appear in the literature; see, for instance, \cite{Boy1} or \cite{Sp}. One version is given as $B^na_n$; our definition extends this binomial transform by collecting all such transforms for all consecutive subsequences of $a_i.$ 
\begin{definition}
For $n>0,$ the {\bf inverse binomial transform} $B^{-n}a_{k}$ of $a_i$ is defined by the sum
\begin{equation}B^{-n}a_{k}=a_{k, -n}=\sum\limits_{i=0}^n (-1)^i  \begin{pmatrix} n+i-1\\ i\end{pmatrix}a_{k-i}.
\end{equation} 
\end{definition}
If $p(x)=\sum\limits_{i=0}^\infty a_ix^i,$ then $B^na_{k}$ is the coefficient of $x^k$ in $(1+x)^np(x),$ and the definition follows from the Binomial Theorem.  Thus if each column of a binomial array represents coordinates for a  power series, Pascal's recurrence is implemented by multiplying $p(x)$ by $(1+x)$ repeatedly, and this process is inverted with division by $(1+x)$.  For instance, if
\begin{equation*}B^1p(x)=(1+x)p(x)=\sum\limits_{i=0}^\infty b_ix^i,\end{equation*}
then
\begin{equation*}b_k=a_k+a_{k-1}.\end{equation*}
On the other hand, inversion is given by multiplication of power series; if
\begin{align*}B^{-1}p(x)&=\frac{p(x)}{1+x}=(1-x+x^2-x^3+\dots)p(x)\\
&=\sum\limits_{i=0}^\infty b_ix^i.\end{align*}
then
\begin{equation*}b_k=a_k-a_{k-1}+\dots+(-1)^ka_0.\end{equation*}
The formula for the inverse binomial transform then follows from the binomial series expansion:  if $n\ge 0,$ then
\begin{equation*}(1+x)^{-n}=\sum\limits_{i=0}^\infty (-1)^i \begin{pmatrix} n+i-1\\ i\end{pmatrix}x^i \end{equation*}
\begin{proposition}[Hockey Stick Rule]   Let $B(a_i)$ be a binomial array.  If column $n$ is given, then column $n-1$ is given by alternating partial sums of  column $n$.  In particular,  
\begin{equation*}a_{k, n-1} = a_{k, n}-a_{k-1, n}+a_{k-2, n} +\dots +(-1)^ka_{0, n}.\end{equation*} 
\end{proposition}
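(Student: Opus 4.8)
The plan is to read the statement as the assertion that passing from column $n$ to column $n-1$ is exactly the inverse binomial transform $B^{-1}$ applied to column $n$. First I would pin down the generating-function dictionary already set up above: if $p(x)=\sum_i a_i x^i$ is the generating function of the zeroth column, then the generating function of column $n$, reading the entries $a_{k,n}$ downward in $k$, is $(1+x)^n p(x)$. This holds because multiplication by $(1+x)$ is precisely what implements Pascal's recurrence $a_{k,n+1}=a_{k-1,n}+a_{k,n}$, as recorded just before the proposition. This identification is the one point that requires a careful check of the indexing conventions; everything after it is formal.

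Granting that dictionary, column $n-1$ has generating function $(1+x)^{n-1}p(x)=\dfrac{(1+x)^n p(x)}{1+x}$, so it is obtained from column $n$ by a single division by $(1+x)$, which is exactly $B^{-1}$. Reading off the coefficient of $x^k$ via the expansion $\dfrac{1}{1+x}=1-x+x^2-\cdots$ then gives $a_{k,n-1}=\sum_{i=0}^{k}(-1)^i a_{k-i,n}$, which is the claimed alternating partial sum. In effect the proposition is a direct specialization of the $B^{-1}$ formula to the case where the input sequence is itself a column of the array.

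To keep the argument self-contained at the level of individual entries, I would alternatively run an induction on $k$. Rewriting Pascal's recurrence with shifted indices yields $a_{k,n-1}=a_{k,n}-a_{k-1,n-1}$. The base case $k=0$ holds because the top row is constant, so $a_{0,n-1}=a_0=a_{0,n}$, matching the single-term formula. For the inductive step I would substitute the assumed expression for $a_{k-1,n-1}$ into $a_{k,n-1}=a_{k,n}-a_{k-1,n-1}$; after reindexing, the alternating signs recombine into $\sum_{i=0}^{k}(-1)^i a_{k-i,n}$, completing the induction.

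I expect no serious obstacle here, since the result is essentially a restatement of the inverse-transform formula derived immediately above. The only place demanding genuine attention is the bookkeeping: verifying that the generating function of column $n$ is indeed $(1+x)^n p(x)$ under the stated downward indexing in $k$, so that ``shift one column to the left'' coincides exactly with ``divide by $(1+x)$.'' Once that correspondence is fixed, both the generating-function computation and the entrywise induction close immediately.
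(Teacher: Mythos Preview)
Your proposal is correct and your primary approach matches the paper's. The paper does not write out a separate proof for this proposition; it is stated as an immediate consequence of the discussion preceding it, where the author identifies column $n$ with $(1+x)^n p(x)$ and computes $B^{-1}$ as multiplication by $1-x+x^2-\cdots$, obtaining exactly the alternating partial sum formula $b_k=a_k-a_{k-1}+\cdots+(-1)^k a_0$. Your generating-function argument is precisely this, and your added entrywise induction via $a_{k,n-1}=a_{k,n}-a_{k-1,n-1}$ is a correct self-contained alternative that the paper does not spell out.
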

In practice, it may be inconvenient to sum to the top line.  The sum may be shortened if a higher entry in column $n-1$ is known.  That is, one may compute an alternating partial sum by continuing a known alternating partial sum of the same entries.
\begin{proposition}[Short Hockey Stick Rule]  Suppose $0 < k_1 < k_2.$
\begin{equation*}a_{k_2, n-1} = (a_{k_2, n}-a_{k_2-1, n} +\dots +(-1)^{k_2-k_1}a_{k_2+1, n})+(-1)^{k_2-k_1+1}a_{k_2, n-1}.\end{equation*} 
\end{proposition}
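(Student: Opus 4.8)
The plan is to deduce the Short Hockey Stick Rule directly from the (full) Hockey Stick Rule of the preceding proposition, since both express entries of column $n-1$ as alternating partial sums of column $n$. The essential observation is that the alternating partial sum defining $a_{k_2, n-1}$ and the one defining $a_{k_1, n-1}$ share the same tail of terms below row $k_1$; subtracting off a suitably signed copy of the shorter sum should leave exactly the truncated sum together with a single correction term proportional to $a_{k_1, n-1}$. This mirrors the prose already in the text, namely that one continues a \emph{known} alternating partial sum of the same entries rather than summing all the way to the top line.

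Concretely, I would first write the Hockey Stick Rule twice,
\begin{equation*}a_{k_2, n-1} = a_{k_2, n}-a_{k_2-1, n}+\dots+(-1)^{k_2}a_{0, n},\end{equation*}
\begin{equation*}a_{k_1, n-1} = a_{k_1, n}-a_{k_1-1, n}+\dots+(-1)^{k_1}a_{0, n}.\end{equation*}
The terms of the first sum from row $k_1$ downward are $(-1)^{k_2-k_1}a_{k_1, n}+(-1)^{k_2-k_1+1}a_{k_1-1, n}+\dots+(-1)^{k_2}a_{0, n}$. Factoring $(-1)^{k_2-k_1}$ out of this tail reproduces precisely the right-hand side of the second display, so the tail equals $(-1)^{k_2-k_1}a_{k_1, n-1}$. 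Replacing the tail by this closed form yields
\begin{equation*}a_{k_2, n-1} = \bigl(a_{k_2, n}-a_{k_2-1, n}+\dots+(-1)^{k_2-k_1-1}a_{k_1+1, n}\bigr)+(-1)^{k_2-k_1}a_{k_1, n-1},\end{equation*}
which is the asserted truncated rule.

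An equally short alternative is to iterate Pascal's recurrence directly: from $a_{j, n-1}=a_{j, n}-a_{j-1, n-1}$ one peels off a single term of column $n$ at each step, and after $k_2-k_1$ steps one stops at row $k_1$ rather than continuing to the top line. I expect no genuine obstacle here beyond careful sign and index bookkeeping; in particular, the delicate points are pinning down that the coefficient of the retained entry $a_{k_1, n-1}$ is $(-1)^{k_2-k_1}$ and that the lowest surviving column-$n$ term is $(-1)^{k_2-k_1-1}a_{k_1+1, n}$. Since the displayed statement carries the running index $k_2$ where the derivation produces $k_1$, I would verify these exponents and subscripts against a small numerical instance from the extended Pascal triangle (Figure 1) before committing to the final indices.
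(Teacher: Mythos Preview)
Your approach is exactly what the paper has in mind: the paper does not give a formal proof of this proposition at all, only the sentence you already quoted (``one may compute an alternating partial sum by continuing a known alternating partial sum of the same entries'') together with the worked numerical example $140=(224-120+45)-9$. Your subtraction of two instances of the full Hockey Stick Rule makes this precise, and your alternative of iterating $a_{j,n-1}=a_{j,n}-a_{j-1,n-1}$ downward $k_2-k_1$ times is equally valid.

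You are also right to flag the indices. The displayed statement in the paper has typographical slips: the last term inside the parentheses should be $(-1)^{k_2-k_1-1}a_{k_1+1,n}$ (not $a_{k_2+1,n}$), and the final correction term should be $(-1)^{k_2-k_1}a_{k_1,n-1}$ (not $a_{k_2,n-1}$ with exponent $k_2-k_1+1$). Your derived formula
\[
a_{k_2,n-1}=\bigl(a_{k_2,n}-a_{k_2-1,n}+\dots+(-1)^{k_2-k_1-1}a_{k_1+1,n}\bigr)+(-1)^{k_2-k_1}a_{k_1,n-1}
\]
is the one confirmed by the paper's own numerical example with $k_2=4$, $k_1=1$, $n=4$.
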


The following subarrays indicate the pattern of each rule:
\begin{equation*}  
\begin{bmatrix*}[r]
 1 & 1  & 1   & 1   & \fbox{1} & 1  \\
 6 & 7 & 8 &   9  & \fbox{10}   & 11  \\
 15 & 21 &  28  & 36 & \fbox{45} & 55 \\
 20&  35   & 56   & 84  & \fbox{120}  & 165  \\
 29 &  49 &  84 & \fbox{140} & \fbox{224} & 344\\
\end{bmatrix*},\quad
\begin{bmatrix*}[r]
 1 & 1  & 1   & 1   & 1 & 1  \\
 6 & 7 & 8 &   \fbox{9}  & 10   & 11  \\
 15 & 21 &  28  & 36 & \fbox{45} & 55 \\
  20 & 35   &  56   & 84   & \fbox{120}  & 165  \\
  29 &  49 &  84 & \fbox{140} & \fbox{224} & 344\\
\end{bmatrix*}.
\end{equation*}
Of course,
\begin{equation*}140=224-120+45-10+1,\qquad 140=(224-120+45)-9.\end{equation*}

It will often be convenient to change the origin of a binomial array.  If  column 0 is  considered as an initial condition at time $t=0$, then the rule  
\begin{equation*}B^{t_1}B^{t_2}a_i=B^{t_1+t_2}a_i\end{equation*}
holds; for the system to evolve beyond time $t$, only the values at time $t$ are needed.  

As a consequence, we may shift the entire array to the left or right when convenient, which amounts to a uniform shift in the second index of $a_{k, n}.$  Uniform vertical shifting may  be performed by multiplying the polynomials for each column by $x^k$; when $k$ is negative,  columns must then allow for  values in the ring of Laurent power series.  In this work, we require that all values above row 0 equal zero.
\vspace{10pt} 
 
We now  extend the vector space operations on powers series and sequences to binomial arrays.
\begin{definition}
Let $\{a_i\}_{i=0}^\infty$ and $\{b_i\}_{i=0}^\infty$ be sequences.  If the sum and scalar multiplication operations are defined as usual for matrices, then 
we have the sum 
\begin{equation*}B(a_i)+B(b_i)= B(a_i+b_i)\end{equation*}
and the scalar multiple 
\begin{equation*}rB(a_i)=B(ra_i).\end{equation*}
\end{definition}
In effect, the binomial transform is linear, so binomial arrays respect linear operations. For instance, 
\begin{proposition}  Any finite linear combination of binomial arrays is a binomial array.
\end{proposition}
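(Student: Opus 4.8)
The plan is to reduce the statement to the two closure rules recorded in the preceding Definition, namely $B(a_i)+B(b_i)=B(a_i+b_i)$ and $rB(a_i)=B(ra_i)$, and then to bootstrap these to arbitrary finite linear combinations by a routine induction on the number of summands. Fixing scalars $r_1,\dots,r_m\in F$ and sequences $\{a_i^{(1)}\},\dots,\{a_i^{(m)}\}$, I would consider the matrix sum $\sum_{j=1}^m r_j B(a_i^{(j)})$. The base case $m=1$ is immediate from the scalar-multiple rule, which gives $r_1 B(a_i^{(1)})=B(r_1 a_i^{(1)})$. For the inductive step I would write
\begin{equation*}\sum_{j=1}^{m+1} r_j\, B\!\left(a_i^{(j)}\right) = \left(\sum_{j=1}^{m} r_j\, B\!\left(a_i^{(j)}\right)\right) + r_{m+1}\, B\!\left(a_i^{(m+1)}\right),\end{equation*}
noting that the parenthesized term is a binomial array by the inductive hypothesis, the second summand is one by the scalar rule, and their sum is one by the addition rule; tracking the seed sequences through these rules identifies the result as $B\!\left(\sum_{j=1}^{m+1} r_j a_i^{(j)}\right)$.

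A cleaner alternative is to argue directly from linearity of the transform. The data defining a binomial array---the constant top row $a_0$, the zeroth column $a_{i,0}=a_i$, and Pascal's recurrence $a_{k,n+1}=a_{k-1,n}+a_{k,n}$---are all linear in the seed sequence, and the closed forms $B^n a_k=\sum_i \binom{n}{i}a_{k-i}$ and $B^{-n}a_k=\sum_i (-1)^i\binom{n+i-1}{i}a_{k-i}$ exhibit each entry $a_{k,n}$ as a fixed linear functional of $\{a_i\}$. Consequently the entrywise linear combination of the arrays $B(a_i^{(j)})$ agrees, position by position, with the array generated by the seed $\sum_j r_j a_i^{(j)}$, which is the desired conclusion.

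I do not expect a genuine obstacle; the only point deserving care is the verification that the matrix operations defined ``as usual'' in the preceding Definition respect all three defining conditions of a binomial array simultaneously, and this is precisely what that Definition already grants. The induction merely propagates the two-term closure, so the essential input throughout is the linearity of the binomial transform rather than any fresh computation.
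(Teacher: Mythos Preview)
Your proposal is correct and aligns with the paper's own treatment, which offers no formal proof at all beyond the single remark ``In effect, the binomial transform is linear, so binomial arrays respect linear operations.'' Your second argument---observing that each entry $a_{k,n}$ is a fixed linear functional of the seed sequence---is precisely that remark made explicit, while your inductive argument is a more detailed (but not genuinely different) unpacking of the same idea.
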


With this proposition in mind, each binomial array may be expressed in terms with the extended Pascal's triangle as the atomic element.  Now let $p(x)=1$; that is, define 
\begin{equation}e_i=\begin{cases} 1 & i=0\\ 0 &i>0\end{cases},\quad \text{so that}\quad e_{k, n}= \begin{cases}\begin{pmatrix} n \\ k\end{pmatrix} & n\ge 0\\
 (-1)^k\begin{pmatrix} -n+k-1 \\ k\end{pmatrix} & n<0.\end{cases}\end{equation}
Then $P(0)=B(p(x))$ is just the binomial array with Pascal's triangle on each side;  on the right, the rows of the usual Pascal's triangle are oriented as columns from the top line, and, on the left, the rows are oriented as diagonals from the top line to column $-1$, now with signs alternating based on row number parity. See Figure 1 above.

\begin{definition}For $j\ge 0$, define the sequence $\{e^j_i\}_{i=0}^\infty$ by 
\begin{equation}e^j_i=\begin{cases} 1 & i=j,\\ 0 & otherwise\end{cases};\end{equation}
that is, $e^j_i$ is the zero sequence with a single 1 in the $j$-th position.
\end{definition}
\begin{definition}  Fix $j\ge 0.$  Define $P(j)=B(e^j_i).$  That is, $P(j)$ is the binomial array corresponding to Pascal's triangle, but with the top row of ones moved to row $k$ and zeros in all rows above.
\end{definition}
It follows that every $B(a_i)$ is a possibly infinite linear combination of binomial arrays of type $P(k).$  Specifically,
\begin{equation*}B(a_i) = \sum\limits_{i=0}^\infty a_i P(i),\end{equation*}
and the sum for a given entry $a_{k, n}$ is always finite.
This fact indicates that we should deduce general properties of binomial arrays from those of Pascal's triangle that are linear in nature - and perhaps some that are not.

If $p(x)$ is a polynomial of degree $m$, then the proper values of $B(p(x))$ are bordered by values with constant absolute value; the constant value is $a_0$ on the top line,  the diagonal from column zero to the lower right has constant value $a_m$, and, past row $m$,  column $-1$ alternates in sign and has constant absolute value $|\sum\limits_{i=0}^m (-1)^i a_i|.$  

\section{Symmetries}

In this section, we consider symmetries of binomial arrays induced from triangular symmetries.  Suppose $p(x)$ is a polynomial of degree $m.$ Then the binomial array $B(p(x))$ is the join of three triangles:  a binomial trapezoid progressing in the lower left direction, a overlapping difference table, and a binomial trapezoid progressing to the right. The right-hand side of the binomial array is clear.  

To see that a binomial trapezoid occurs on the left-hand side, we may interpret the usual Pascal's recurrence, progressing to the right, as a Pascal's recurrence progressing to the lower left direction with sign corrections. With this rule, the corner of the L is the sum of the right value of the L plus the negation of the value at the top of the L.   The initial condition of this trapezoid is the diagonal of entries starting at row 0 of column $-m-1$ and ending at the entry at row $m+1$ in column $-1,$ with sign parity affected by row parity.

The left and right trapezoids are joined by a difference table with sides of length $m+2$ joining the initializing diagonal noted above to the origin.  The columns of the triangle are the given by repeatedly dividing $p^*(x)=x^mp(1/x)$  by $x+1$; each column records the quotient with remainder.  The remainders give the coefficients in the Taylor expansion of $p^*(x)$ at $x=-1.$  For example, the difference table in Figure 2 below expresses the equality
\begin{equation*}p^*(x)= 2x^3+5x^2+x-6= 2(x+1)^3-(x+1)^2-3(x+1)-4.\end{equation*}

\begin{figure}
\includegraphics[scale=.6]{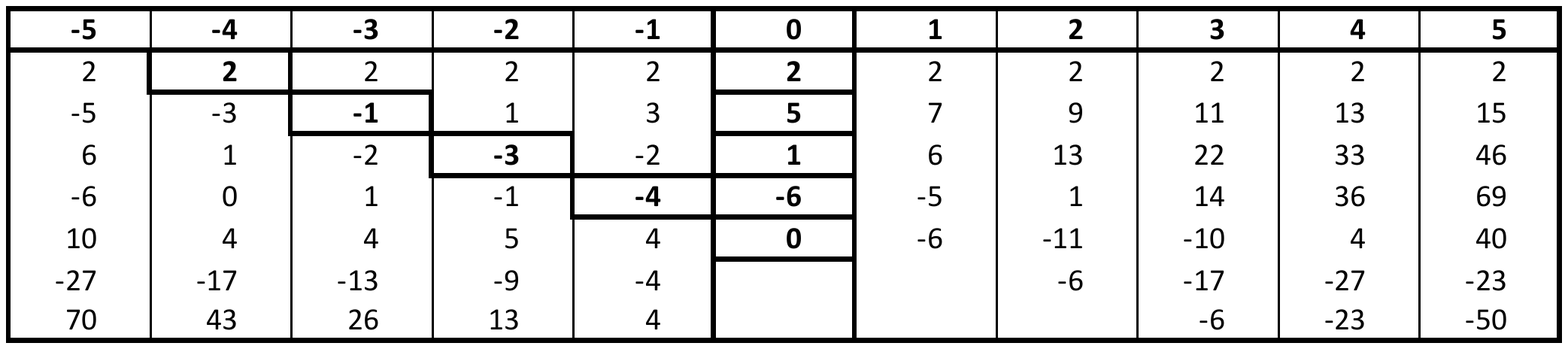}
\caption{$B(p(x))$ with $p(x)=-6x^3+x^2+5x+2$}
\label{}
\end{figure}

We have immediately
\begin{proposition}
The symmetry that interchanges the trapezoids on each side of $B(p(x))$ is induced by reflecting the difference table of $B(p(x))$ along rows, with sign changes along rows based on row parity. 
\end{proposition}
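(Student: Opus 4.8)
The plan is to realize the asserted symmetry as a single parity-signed reflection $\rho$, to verify that $\rho$ carries solutions of Pascal's recurrence to solutions, and then to check that on the difference table $\rho$ exchanges the two ``seeds'' from which the left and right trapezoids grow; the interchange of the trapezoids is then forced, since each trapezoid is the unique extension of its seed under the recurrence. Writing $a_{k,n}$ for the entries of $B(p(x))$ with $\deg p=m$, I would take $\rho\colon a_{k,n}\mapsto(-1)^k a_{k,\,k-n-(m+1)}$. For each fixed row $k$ this reverses the row about its midpoint $n=(k-m-1)/2$ and multiplies it by $(-1)^k$, which is exactly ``reflecting along rows with sign changes based on row parity''; the midpoints trace the line $n=(k-m-1)/2$, which passes through the apex $(m+1,0)$ of the difference table.

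First I would check that $\rho$ preserves Pascal's recurrence. Setting $b_{k,n}=(-1)^k a_{k,\,k-n-(m+1)}$ and abbreviating $N=k-n-(m+1)$, the recurrence $a_{k,N}=a_{k-1,N-1}+a_{k,N-1}$ gives $b_{k-1,n}+b_{k,n}=(-1)^k\big(a_{k,N}-a_{k-1,N-1}\big)=(-1)^k a_{k,N-1}=b_{k,n+1}$, so $b$ again satisfies $b_{k,n+1}=b_{k-1,n}+b_{k,n}$. In particular $\rho$ sends the binomial array $B(p(x))$ to another binomial array, and it sends the difference table of $B(p(x))$ to a difference table.

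Next I would locate the two seeds inside the table and show $\rho$ swaps them. The right trapezoid is the rightward Pascal extension of column $0$, the column of coefficients of $p$; the left trapezoid is the lower-left, sign-corrected extension of the initializing diagonal $n-k=-(m+1)$ described above. Since $\rho$ maps $(k,0)$ to $(k,k-(m+1))$, it carries column $0$ onto the line $n-k=-(m+1)$, and being an involution it carries that diagonal back to column $0$; thus within each row it interchanges the coefficient seed with the initializing-diagonal seed. Because $\rho$ preserves the recurrence and reverses the $n$-direction, it carries the rightward extension of the one seed onto the lower-left extension of the other. Hence $\rho$ interchanges the two trapezoids, and it does so through its action on the difference table, which is what the statement asserts.

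The step I expect to be the real obstacle is the value-level matching in the previous paragraph: verifying that the parity-signed reversal of the coefficient column literally reproduces the initializing-diagonal entries, equivalently that $\rho$ of the table is the table of the reflected polynomial. I would reduce this to the atomic arrays through $B(p)=\sum_j a_j P(j)$ and the upper-index negation identity $\binom{n}{k}=(-1)^k\binom{k-n-1}{k}$, then align the resulting factor $(-1)^k$ with the sign corrections of the lower-left recurrence and with the Taylor-at-$(-1)$ description of the table via $p^*(x)=x^m p(1/x)$. Tracking these signs together with the $(m+1)$-fold index shift across all $m+1$ contributing diagonals is the delicate bookkeeping; the rest of the argument is formal.
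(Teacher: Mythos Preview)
The paper offers no proof of this proposition: it is introduced with ``We have immediately'' and is followed only by the numerical example. Your first two paragraphs already constitute a complete argument and supply more than the paper does. The map $\rho\colon a_{k,n}\mapsto(-1)^k a_{k,\,k-n-(m+1)}$ preserves Pascal's recurrence (your computation is correct), fixes the top row since $b_{0,n}=a_{0,-n-(m+1)}=a_0$, and sends the triangular difference-table region $\{k-(m+1)\le n\le 0\}$ onto itself while exchanging column $0$ with the diagonal $n=k-(m+1)$. Since the right and left trapezoids are the unique Pascal extensions of those two seeds in their respective directions, and $\rho$ reverses the $n$-direction, the interchange of trapezoids follows.

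The doubt you raise in the last paragraph is misplaced. You appear to be asking whether $(-1)^k a_k$ coincides with the original initializing-diagonal value $a_{k,\,k-(m+1)}$, i.e., whether $\rho$ fixes $B(p(x))$. It does not, and the proposition does not claim it does: the symmetry sends $B(p(x))$ to a \emph{different} binomial array (in the paper's example, the one with coefficient column $2,1,-3,4$), whose right trapezoid is the sign-corrected left trapezoid of the original and vice versa. Once you have shown that $b=\rho(a)$ satisfies Pascal's recurrence with constant top row, $b$ is automatically a binomial array, its difference table is by construction the row-reflected, parity-signed table, and its trapezoids are entry-for-entry the swapped trapezoids of $a$ with the factor $(-1)^k$ absorbed into the paper's ``sign corrections'' for the lower-left recurrence. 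No atomic decomposition via $P(j)$, no upper-index negation identity, and no separate Taylor-at-$(-1)$ bookkeeping is required; you can simply delete the final paragraph.
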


For example,
\begin{equation*}  
\begin{bmatrix*}[r]
 2  & 2   & 2   & 2 & {2} \\
  & -1 &   1  & 3   & {5}  \\
  &    & -3 & -2 & 1\\
   &    &    & -4  & -6 \\
    &    &    &   & {0}  \\
\end{bmatrix*}\quad\mapsto\quad
\begin{bmatrix*}[r]
 2  & 2   & 2   & 2 & {2} \\
  & -5 &   -3  & -1   & {1}  \\
  &    & 1 & -2 & -3 \\
   &    &    & 6  & 4  \\
    &    &    &   & {0}  \\
\end{bmatrix*}\
\end{equation*}

Next
\begin{proposition}
The involution $p(x) \mapsto p^*(x)=x^m p(1/x)$ inverts the columns of $B(p(x))$ with non-negative index.  The difference table for $B(p^*(x))$ has values on its left-hand side given by the Taylor coefficients of $p(x)$ at $x=-1$.
\end{proposition}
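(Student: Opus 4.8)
The plan is to reduce both assertions to a single polynomial identity together with the involution property $(p^*)^* = p$. First I would record that $p(x) \mapsto p^*(x) = x^m p(1/x)$ really is an involution on polynomials of formal degree $m$, by the direct computation $(p^*)^*(x) = x^m p^*(1/x) = x^m (1/x)^m p(x) = p(x)$. Everything else follows by applying known structure to $p^*$ in place of $p$.

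For the first statement, I would recall that for $n \ge 0$ the $n$-th column of $B(p(x))$ records the coefficients of the degree-$(m+n)$ polynomial $(1+x)^n p(x)$, with nonzero entries occupying rows $0$ through $m+n$. To \emph{invert} such a column is to reverse this finite block of coefficients, i.e.\ to apply the starring operation in degree $m+n$. The key computation is then
\[
x^{m+n}\left[(1+x)^n p(x)\right]_{x \to 1/x}
= x^{m+n}\left(1 + \tfrac{1}{x}\right)^n p\!\left(\tfrac{1}{x}\right)
= (1+x)^n \cdot x^m p\!\left(\tfrac{1}{x}\right)
= (1+x)^n p^*(x),
\]
using $x^{m+n}(1+1/x)^n = x^m(1+x)^n$. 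The right-hand side is exactly the polynomial whose coefficients form the $n$-th column of $B(p^*(x))$, so inverting the $n$-th column of $B(p(x))$ produces the $n$-th column of $B(p^*(x))$. This also explains the restriction to non-negative index: for $n < 0$ the column encodes the infinite series $(1+x)^{-|n|}p(x)$, for which the finite coefficient-reversal is not defined.

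For the second statement, I would invoke the description of the difference table already established for a general binomial array: the left-hand (hypotenuse) entries of the difference table of $B(q(x))$ are the coefficients of the Taylor expansion of $q^*(x)$ at $x=-1$, obtained by repeatedly dividing $q^*$ by $x+1$. Applying this with $q = p^*$ and using $(p^*)^* = p$, the left-hand side of the difference table of $B(p^*(x))$ consists of the Taylor coefficients of $(p^*)^*(x) = p(x)$ at $x=-1$, which is precisely the claim.

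I expect no deep obstacle; the delicate part is purely the bookkeeping of degrees and orientation conventions. In particular, the inversion in column $n$ must reverse exactly the $m+n+1$ entries in rows $0$ through $m+n$ while leaving the zero entries above and below untouched, and this is automatic from the displayed identity (the reversal is the degree-$(m+n)$ starring, not a fixed-length one). To pin down the orientation I would check the running example $p(x) = -6x^3 + x^2 + 5x + 2$, where column $0$ reverses from $(2,5,1,-6)$ to $(-6,1,5,2)$, matching $p^*(x) = 2x^3 + 5x^2 + x - 6$, and confirm that the Taylor data of $p$ at $x=-1$ then appear on the left of the difference table of $B(p^*(x))$.
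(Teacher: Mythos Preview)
Your proposal is correct and follows exactly the line of reasoning implicit in the paper. The paper states this proposition without a formal proof, relying on the preceding discussion of the difference-table structure (that the left side of the difference table for $B(q)$ records the Taylor coefficients of $q^*$ at $x=-1$) and on the column interpretation $[x^k]\,(1+x)^n p(x)$; you have made both steps explicit via the identity $x^{m+n}(1+1/x)^n p(1/x)=(1+x)^n p^*(x)$ and the involution $(p^*)^*=p$.
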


For example,
\begin{equation*}  
\begin{bmatrix*}[r]
 2  & 2   & 2   & 2 & {2} \\
  & -1 &   1  & 3   & {5}  \\
  &    & -3 & -2 & 1 \\
   &    &    & -4  & -6  \\
    &    &    &   & {0}  \\
\end{bmatrix*}\quad\mapsto\quad
\begin{bmatrix*}[r]
 {-6}  & -6   & -6   & -6 & -6 \\
  & 19 &   13  & 7   & 1  \\
  &    & -15 & -2 & 5\\
   &    &    & 4  & 2  \\
    &    &    &   & {0}  \\
\end{bmatrix*},\quad
\end{equation*}
One readily checks that 
\begin{equation*}p(x)=-6x^3+x^2+5x+2= -6(x+1)^3+19(x+1)^2-15(x+1)+4.\end{equation*}

Finally one reflects across the diagonal of the left trapezoid by passing to to the right side, reflecting, and passing back to the left.  To compute the right trapezoid, one inverts the left-hand side of the difference table with sign changes.

\section{Hockey Stick Rules}

We now describe the various hockey stick rules in any $B(p(x)).$ If $p(x)$ is not a polynomial, then there are two hockey stick rules: Proposition 2.4  and its reflection under the symmetry that interchanges trapezoids. If $p(x)$
is a polynomial, we may apply the four-fold symmetries to obtain six hockey stick rules, with two additional  rules for each lower edge of the proper region. In addition, each rule has a corresponding short version, yielding three short rules after multiplicity.

\begin{proposition}[Top Line to Lower Right] 
In addition to the top line rule (Proposition 2.4), one also has
\begin{equation*}a_{0, n}+a_{1, n+1} + \dots + a_{k, n+k} = a_{k, n+k+1}.\end{equation*}
With $0<k_1<k_2$, the short rule is given by
\begin{equation*}a_{k_1-1, n+k_1}+(a_{k_1, n+k_1}+a_{k_1+1, n+k_1+1} + \dots + a_{k_2, n+k_2}) = a_{k_2, n+k_2+1}.\end{equation*}
\end{proposition}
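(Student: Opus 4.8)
The plan is to establish the long rule by a direct induction on $k$ built on Pascal's recurrence, and then to deduce the short rule from it by subtracting two instances of the long rule. No generating-function machinery is needed; the argument is purely a telescoping of the defining recurrence, so it applies to an arbitrary sequence $a_i$ with no polynomial hypothesis on $p(x)$.

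For the long rule, the base case $k = 0$ asserts $a_{0, n} = a_{0, n+1}$, which holds because both entries lie on the top line and hence both equal $a_0$ by part (1) of Definition 2.1. For the inductive step I would assume the rule at $k-1$, namely
\[ a_{0, n} + a_{1, n+1} + \dots + a_{k-1, n+k-1} = a_{k-1, n+k}, \]
and add the single diagonal term $a_{k, n+k}$ to both sides. The right-hand side then becomes $a_{k-1, n+k} + a_{k, n+k}$, which collapses to $a_{k, n+k+1}$ upon applying Pascal's recurrence $a_{k, m+1} = a_{k-1, m} + a_{k, m}$ with $m = n+k$. This closes the induction. The only point demanding care is the index bookkeeping along the diagonal: the general summand is $a_{j, n+j}$, so the inductive hypothesis packages the first $k$ diagonal terms into the single entry $a_{k-1, n+k}$, which sits one step up and to the left of the new toe $a_{k, n+k+1}$, precisely the configuration Pascal's recurrence consumes.

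For the short rule I would apply the long rule twice. Evaluating it at $k_2$ gives $\sum\limits_{j=0}^{k_2} a_{j, n+j} = a_{k_2, n+k_2+1}$, and evaluating it at $k_1 - 1$ (legitimate since $k_1 > 0$) gives $\sum\limits_{j=0}^{k_1-1} a_{j, n+j} = a_{k_1-1, n+k_1}$. Subtracting the second from the first and transposing one term yields
\[ a_{k_1-1, n+k_1} + \left( a_{k_1, n+k_1} + a_{k_1+1, n+k_1+1} + \dots + a_{k_2, n+k_2} \right) = a_{k_2, n+k_2+1}, \]
which is exactly the stated short rule. Equivalently, one may simply rerun the induction above with its base case shifted to $k = k_1$, where the assertion reduces to the single instance $a_{k_1-1, n+k_1} + a_{k_1, n+k_1} = a_{k_1, n+k_1+1}$ of Pascal's recurrence; this version exhibits the ``continuation'' reading, in which the known entry $a_{k_1-1, n+k_1}$ already stores the accumulated partial sum down through row $k_1 - 1$.

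I expect no genuine obstacle here: the whole proof is a one-line collapse of the recurrence, and the only thing to get right is the alignment of the diagonal indices with the base case. I would close with the remark that this is the down-and-to-the-right diagonal analogue of the classical hockey stick identity, and that for polynomial $p(x)$ it can also be obtained by applying the trapezoid-interchange symmetry of Proposition 3.1 to the top line rule (Proposition 2.4); the direct induction is preferable precisely because it avoids any polynomial restriction and treats all sequences uniformly.
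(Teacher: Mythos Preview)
Your proof is correct but takes a different route from the paper. The paper derives the long rule by applying the trapezoid-interchange symmetry (Proposition~3.1) to the top line rule of Proposition~2.4, observing that for the Pascal triangle $P(j)$ the alternating signs disappear under this reflection, and then invoking the linearity $B(a_i)=\sum_i a_i P(i)$ to pass to arbitrary sequences. You instead run a direct telescoping induction on $k$ using nothing more than Pascal's recurrence and the constancy of the top row. Your approach is more elementary and self-contained: it never touches the symmetry framework, never decomposes into shifted Pascal triangles, and requires no special treatment of the polynomial versus non-polynomial cases. The paper's approach, on the other hand, buys conceptual unity---it exhibits this rule as the image of Proposition~2.4 under a symmetry of the array, which is the organizing theme of Section~4. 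You already note this alternative at the end of your write-up, and your remark that the direct induction avoids any polynomial restriction is exactly the right point of comparison.
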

\begin{proof}
The formula follows immediately from Proposition 2.4 after applying the symmetry that interchanges trapezoids.  We see  readily that the alternation is removed  in the case of Pascal's triangle; the case of general $B(a_i)$ follows from linearity. 
\end{proof}

The following subarrays indicate the pattern of each rule:
\begin{equation*}  
\begin{bmatrix*}[r]
 \fbox{1} & 1  & 1   & 1   & 1 & 1  \\
 6 & \fbox{7} & 8 &   9  & 10   & 11  \\
 15 & 21 &  \fbox{28}  & 36 & 45 & 55 \\
 20&  35   & 56   & \fbox{84}   & 120  & 165  \\
 29 &  49 &  84 & 140 & \fbox{224} & \fbox{344}\\
\end{bmatrix*},\quad
\begin{bmatrix*}[r]
 1 & 1  & 1   & 1   & 1 & 1  \\
 6 & 7 & \fbox{8} &   9  & 10   & 11  \\
 15 & 21 &  \fbox{28}  & 36 & 45 & 55 \\
  20 & 35   &  56   & \fbox{84}   & 120  & 165  \\
  29 &  49 &  84 & 140 & \fbox{224} & \fbox{344}\\
\end{bmatrix*}.
\end{equation*}
Of course,
\begin{equation*}224+84+36+28+7+1=344,\qquad (224+84+28)+8=344\end{equation*}

Now suppose $p(x)$ is a polynomial of degree $m$.  Using the symmetries, we obtain four additional rules, based off the diagonal on the right-hand side and the centerline of the left-hand side.  

\begin{proposition}[Right-hand Side Rules] Let $p(x)$ be a polynomial of degree $m$. Fix $n>0$, and suppose $k>m.$
\begin{enumerate}
\item Along rows from the central column of $B(p(x))$, 
\begin{equation*}a_{k, 1} + \dots + a_{k, n} = a_{k+1, n+1}.\end{equation*}
\item Along columns to the right-hand side diagonal of $B(p(x))$, 
\begin{equation*} a_{k, n}-a_{k+1, n}+\dots + (-1)^{m+n-k}a_{m+n, n} = a_{k-1, n-1}.\end{equation*}
\end{enumerate}
\end{proposition}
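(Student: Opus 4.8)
The plan is to derive both identities directly from Pascal's recurrence $a_{j,n}=a_{j-1,n-1}+a_{j,n-1}$ by telescoping, locating the vanishing of the boundary terms in the degree hypothesis $\deg p=m$. Throughout I will use the generating-function description from Section 2: column $n\ge 0$ of $B(p(x))$ records the coefficients of $(1+x)^n p(x)$, so that $a_{j,n}=0$ whenever $j>m+n$.

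For part (1) I would rewrite the recurrence as $a_{k,j}=a_{k+1,j+1}-a_{k+1,j}$ and sum over $j=1,\dots,n$. The right-hand side telescopes to $a_{k+1,n+1}-a_{k+1,1}$. Since $a_{k+1,1}$ is the coefficient of $x^{k+1}$ in $(1+x)p(x)$, a polynomial of degree $m+1$, the hypothesis $k>m$ forces $k+1>m+1$ and hence $a_{k+1,1}=0$, leaving exactly $a_{k+1,n+1}$. This is the precise point where strictness of $k>m$ is used. Conceptually this is the classical hockey stick $\sum_j \binom{j}{k}=\binom{n+1}{k+1}$ for the atomic triangle $P(0)$ transported to the right-hand diagonal, and the general case is immediate from linearity (Proposition 2.3 and the decomposition $B(a_i)=\sum a_i P(i)$), the condition $k>m$ killing precisely those components $P(i)$ for which the boundary term would survive.

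For part (2) I would substitute $a_{j,n}=a_{j-1,n-1}+a_{j,n-1}$ into the alternating sum $\sum_{j=k}^{m+n}(-1)^{j-k}a_{j,n}$ and reindex. The two resulting sums over column $n-1$ cancel term by term except at the two ends, leaving $a_{k-1,n-1}+(-1)^{m+n-k}a_{m+n,n-1}$. Here $a_{m+n,n-1}$ is the coefficient of $x^{m+n}$ in $(1+x)^{n-1}p(x)$, a polynomial of degree $m+n-1$, so it vanishes automatically; this is why part (2) needs no lower bound on $k$ beyond nonemptiness of the sum. Conceptually this rule is Proposition 2.4 reflected under the column inversion $p\mapsto p^*$ of Proposition 3.2, which interchanges the top line with the right-hand diagonal; equivalently, it is Proposition 2.4 combined with the identity $\sum_{j=0}^{m+n}(-1)^j a_{j,n}=0$, valid for $n>0$ because $(1+x)^n p(x)$ vanishes at $x=-1$.

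The routine part is the telescoping and reindexing; the only genuine care is bookkeeping the signs and summation ranges so that the surviving boundary terms are exactly $a_{k+1,1}$ in part (1) and $a_{m+n,n-1}$ in part (2), after which their vanishing is read off from the degree of the relevant polynomial. The main work, such as it is, is presentational: matching these two telescopes to the four-fold symmetry framing announced before the statement, so that the proposition reads as the images of the established row rule (Proposition 2.4) and the classical hockey stick under the symmetry group of Section 3 rather than as two unrelated computations.
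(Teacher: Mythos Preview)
Your argument is correct. Both telescopes are set up properly, and the identification of the two boundary terms $a_{k+1,1}$ and $a_{m+n,n-1}$ as the places where the degree hypothesis enters is exactly right; your side remark that part~(2) in fact needs no lower bound on $k$ is also accurate.

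The paper itself does not write out a proof of this proposition at all: it simply asserts that ``using the symmetries, we obtain four additional rules,'' meaning that parts~(1) and~(2) are to be read as the images of Proposition~4.1 and Proposition~2.4 under the column-inversion involution $p\mapsto p^*$ of Proposition~3.2 (which swaps the top line with the right-hand diagonal). Your direct telescoping of Pascal's recurrence is a genuinely different, more elementary route: it avoids invoking Section~3 entirely and makes the role of the degree hypothesis completely explicit, at the cost of not displaying the four-fold symmetry that organizes the six long hockey-stick rules into a single orbit. You already note this trade-off in your final paragraph, so nothing is missing; if anything, your write-up supplies the details the paper omits.
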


The following subarrays indicate the pattern of each rule with $p(x)=6x^2-6x+3$:
\begin{equation*}  
\begin{bmatrix*}[r]
 3 & 3  & 3   & 3   & 3 & 3  \\
 -6 & -3 & 0 &   3  & 6   & 9  \\
 6 & 0 &  -3  & -3 & 0 & 6 \\
 0 &  \fbox{6}   & \fbox{6}   & \fbox{3}   & 0  & 0  \\
 0  &  0 &  6 & 12 & \fbox{15} & 15\\
\end{bmatrix*},\quad
\begin{bmatrix*}[r]
 3 & 3  & 3   & 3    \\
 -6 & \fbox{$-3$} & 0 &   3    \\
 6 & 0 &  \fbox{$-3$}  & -3  \\
 0 &  6   & \fbox{6}   & 3   \\
 0  &  0 &  \fbox{6} & 12 \\
 0 & 0 & 0 & 6
\end{bmatrix*}
\end{equation*}

Of course,
\begin{equation*}6 + 6 + 3 = 15,\qquad -3+6-6=-3.\end{equation*}

\begin{proposition}[Left-hand Side Rules] Let $p(x)$ be a polynomial of degree $m$. Fix $n>0$ and suppose $k>m.$
\begin{enumerate}
\item Along rows from the centerline of $B(p(x))$,
\begin{equation*}  a_{k, -1}+\dots + a_{k, -n} = -a_{k+1, -n}.\end{equation*}
\item Along diagonals from the central column  to the upper left of $B(p(x))$, 
\begin{equation*} a_{k, -1}+a_{k-1, -2}+\dots + a_{k-n+1, -n} = -a_{k-n, -n}.\end{equation*}
\end{enumerate}
\end{proposition}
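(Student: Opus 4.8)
The plan is to derive both formulas by applying the four-fold symmetry machinery of Section 3 to rules we have already established on the right-hand side, exactly as Proposition 4.3 was obtained from the top-line rules. The governing principle throughout is linearity (Proposition 2.3): it suffices to verify each identity for the atomic arrays $P(j)$, since every $B(p(x))$ with $p(x)$ of degree $m$ is the finite linear combination $\sum_{j=0}^m a_j P(j)$, and the hypothesis $k>m$ guarantees we are operating strictly below the difference-table region where the trapezoidal structure is clean.

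For part (2), the diagonal rule to the upper left, I would recognize it as the mirror image of the alternating column rule in Proposition 4.2(2) under the reflection across the left trapezoid's diagonal described at the close of Section 3. Recall that passing from the right trapezoid to the left is implemented by Pascal's recurrence running in the lower-left direction with sign corrections: the corner of the L equals the right value plus the \emph{negation} of the top value. Reading that corrected recurrence repeatedly down a diagonal telescopes the left-hand entries, and the accumulated sign corrections collapse the alternating signs present on the right into the uniform sum $a_{k,-1}+a_{k-1,-2}+\dots+a_{k-n+1,-n}$ on the left, with the single surviving boundary term producing $-a_{k-n,-n}$. The negative sign is precisely the leftover of the one uncancelled sign correction at the toe of the final L.

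For part (1), the row rule along the centerline, the cleanest route is to combine the left-trapezoid recurrence with the already-proven right-hand row rule of Proposition 4.2(1), again by transporting through the interchange-of-trapezoids symmetry (Proposition 3.1), which reflects the difference table along rows with parity-based sign changes. Concretely, I would write each centerline entry $a_{k,-j}$ as a difference dictated by the lower-left recurrence, sum from $j=1$ to $n$, and observe the telescoping that leaves only $-a_{k+1,-n}$; the minus sign is inherited from the sign correction built into the leftward recurrence, in contrast to the sign-free right-hand version.

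The main obstacle I anticipate is bookkeeping the signs rather than any structural difficulty. The leftward Pascal recurrence carries a sign correction at every step, and these corrections interact with the parity-dependent signs introduced by the trapezoid-interchange symmetry; getting the single net minus sign in each conclusion to emerge correctly, while confirming that all intermediate alternations cancel, is where I would spend the most care. I would pin this down by first checking the $P(0)$ case explicitly against the signed Pascal triangle formula~(2.3), where $e_{k,n}=(-1)^k\binom{-n+k-1}{k}$ for $n<0$ makes the telescoping transparent, and then invoke linearity to finish.
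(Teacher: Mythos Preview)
Your proposal is correct and follows the same route as the paper, which offers no proof beyond the single sentence preceding the right- and left-hand side rules: ``Using the symmetries, we obtain four additional rules, based off the diagonal on the right-hand side and the centerline of the left-hand side.'' Your plan to transport the top-line and right-hand rules through the trapezoid-interchange and reflection symmetries of Section~3, with the sign corrections of the leftward recurrence producing the surviving minus sign, is exactly what that sentence gestures at; your fallback of checking $P(0)$ explicitly via equation~(2.3) and then invoking linearity is more detail than the paper itself supplies.
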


The following subarrays indicate the pattern of each rule with $p(x)=6x^2-6x+3$:
\begin{equation*}  
\begin{bmatrix*}[r]
 3 & 3  & 3   & 3   & 3 & 3  \\
 -21 & -18 & -15 &   -12  & -9   & -6  \\
 81 & 60 &  42  & 27 & 15 & 6 \\
  -225 & \fbox{-144} &  \fbox{ -84} &  \fbox{-42}  &  \fbox{-15} & 0  \\
 510 & \fbox{285}  & 141  & 57  & 15 & 0\\
\end{bmatrix*},\qquad 
\begin{bmatrix*}[r]
  3   & 3   & 3 & 3  \\
   \fbox{-15} &  -12  & -9   & -6  \\
   \fbox{42}  & 27 & 15 & 6 \\
    -84 & \fbox{-42}  & -15 & 0  \\
  141  & 57  & \fbox{15} & 0\\
    -213 &  -72 & -15 & 0\\
\end{bmatrix*}.
\end{equation*}
Of course,
\begin{equation*} -15-42-84-144=-285,\qquad 42-42+15=-(-15).\end{equation*}

In general,  the three short rules apply to any $B(a_i).$ For the remaining short rule,

\begin{proposition}[Third Short Rule]  Let $B(a_i)$ be any binomial array.   Suppose $n_1 < n_2.$ Then
\begin{equation*}a_{k+1, n_1}+(a_{k, n_1} +\dots + a_{k, n_2})=a_{k+1, n_2+1}.\end{equation*} 
\end{proposition}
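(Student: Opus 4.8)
The plan is to derive this identity directly from Pascal's recurrence by telescoping, which is the same mechanism that produces the long row rule of Proposition 4.2(1); indeed this proposition is precisely its short companion. Because the claim is asserted for an arbitrary binomial array, none of the polynomial-degree or symmetry hypotheses enter, and I will rely solely on the defining relation $a_{k, n+1}=a_{k-1, n}+a_{k, n}$, which holds at every entry of the lower half-plane array.

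First I would shift the first index of the recurrence up by one: replacing $k$ with $k+1$ gives $a_{k+1, n+1}=a_{k, n}+a_{k+1, n}$, and solving for the row-$k$ term rewrites each summand as a consecutive difference within row $k+1$,
\begin{equation*}a_{k, n}=a_{k+1, n+1}-a_{k+1, n}.\end{equation*}
Summing this over $n$ from $n_1$ to $n_2$, the right-hand side telescopes and all interior terms cancel, leaving
\begin{equation*}a_{k, n_1}+\dots+a_{k, n_2}=a_{k+1, n_2+1}-a_{k+1, n_1}.\end{equation*}
Transposing $a_{k+1, n_1}$ to the left-hand side yields exactly the asserted formula.

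There is no real obstacle here beyond index bookkeeping: one must confirm that the difference form is applied consistently across the full range $n_1\le n\le n_2$ so that the cancellation is clean, and note that since the recurrence is valid at every entry of the array, including columns of negative index, the hypothesis $n_1<n_2$ alone suffices, with no sign restriction on $n_1$ or $n_2$. As a consistency check, in the polynomial case with $k>m$ the entry $a_{k+1, 1}=a_k+a_{k+1}$ vanishes, so setting $n_1=1$ recovers the long rule $a_{k, 1}+\dots+a_{k, n_2}=a_{k+1, n_2+1}$ of Proposition 4.2(1), confirming that the short rule is the genuine generalization and that the telescoping has been carried out with the correct endpoints.
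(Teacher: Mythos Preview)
Your proof is correct and is exactly the argument the paper has in mind: the paper does not write out a proof of this proposition, but remarks afterward that ``the short rules may be derived by iterating Pascal's recurrence from a fixed entry in one of three directions,'' and your telescoping of $a_{k,n}=a_{k+1,n+1}-a_{k+1,n}$ is precisely the rightward iteration. Your consistency check against Proposition~4.2(1) is also accurate.
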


The following subarray indicates the pattern of this rule:
\begin{equation*}  
\begin{bmatrix*}[r]
 1 & 1  & 1   & 1   & 1 & 1  \\
 6 & 7 & 8 &   9  & 10   & 11  \\
 15 & \fbox{21} &  \fbox{28}  & \fbox{36} & 45 & 55 \\
 20&  \fbox{35}   & 56   & 84  & \fbox{120}  & 165  \\
 29 &  49 &  84 & 140 & 224 & 344\\
\end{bmatrix*},\quad
\end{equation*}
Of course,  $35+(21+28+36)=120.$

Alternatively, the short rules may be derived by iterating Pascal's recurrence from a fixed entry in one of three directions: upwards, to the right, and to the upper left. The usual hockey stick rules result when these iterations intersect with a border.

\section{Cauchy Product}
We now recall the background of the Cauchy algebra for Theorem 6 of \cite{EF}, recast to fit the present situation.  Here we use the binomial transform instead of finite differences; these models are equivalent by interchanging $1+x$ and $1-x$.

\begin{definition}[Convolution]  Suppose $a$ and $b$ are sequences with generating functions $p(x)$ and $q(x)$, respectively. Then the {\bf Cauchy product} (or {\bf discrete convolution} if polynomials) of $a$ and $b$ is a new sequence $a*b$ defined by 
\begin{equation}(a*b)_m=\sum\limits_{i+j=m} a_ib_j = \sum\limits_{i=0}^m a_ib_{m-i}=[x^m](p(x)q(x)),\end{equation}
where $[x^m](p(x))=a_m.$
\end{definition}
Properties of convolution follow naturally from algebraic properties of power series. For instance, we immediately note
\begin{proposition} Let $a$, $b$, and $c$  be sequences, and let $r$ be in $F.$ Then
\begin{enumerate}
\item $a*b=b*a,$
\item $(ra+b)*c=(ra)*c+b*c,$ and
\item $(a*b)*c=a*(b*c).$
\end{enumerate}
\end{proposition}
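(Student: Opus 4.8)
The plan is to transport all three identities to the algebra $V=F[[x]]$, where the Cauchy product is realized as ordinary multiplication of power series. Recall from Definition 5.1 that if $a$, $b$, and $c$ have generating functions $p(x)$, $q(x)$, and $s(x)$, respectively, then $(a*b)_m=[x^m](p(x)q(x))$. Since extracting the coefficient of $x^m$ is $F$-linear and, more to the point, since the correspondence between sequences and their generating functions is a ring isomorphism onto $V$ (as noted at the start of Section 2), each algebraic law for multiplication in the commutative, associative $F$-algebra $V$ pulls back to the corresponding law for $*$.

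Concretely, I would argue as follows. For commutativity, multiplication in $V$ satisfies $p(x)q(x)=q(x)p(x)$; comparing coefficients of $x^m$ gives $(a*b)_m=(b*a)_m$ for every $m$, hence $a*b=b*a$. For bilinearity, the sequence $ra+b$ has generating function $rp(x)+q(x)$, so distributivity and the $F$-bilinearity of multiplication in $V$ yield
\begin{equation*}(rp(x)+q(x))\,s(x)=r\,(p(x)s(x))+q(x)s(x),\end{equation*}
and reading off coefficients gives $(ra+b)*c=(ra)*c+b*c$. For associativity, the identity $(p(x)q(x))\,s(x)=p(x)\,(q(x)s(x))$ in $V$ gives $(a*b)*c=a*(b*c)$ upon extracting coefficients.

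There is essentially no obstacle here, as the statement is an immediate consequence of the ring structure of $V$. The only point that genuinely requires checking is the one already recorded in Definition 5.1, namely that the defining double sum $\sum_{i+j=m}a_ib_j$ coincides with $[x^m](p(x)q(x))$; this is just the Cauchy formula for the product of two power series. I would also remark that for each fixed $m$ the index set $\{(i,j):i+j=m,\ i,j\ge 0\}$ is finite, so all sums are well defined even when $a$, $b$, $c$ are infinite sequences, and no questions of convergence arise in $F[[x]]$. If one preferred to avoid generating functions, each identity could instead be verified directly by reindexing the finite sums --- for associativity one rewrites both sides as the triple sum $\sum_{i+j+\ell=m}a_ib_jc_\ell$ --- but the route through $V$ is cleaner and exhibits the three parts as simultaneous corollaries of a single structural fact.
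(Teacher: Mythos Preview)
Your proposal is correct and follows exactly the approach the paper indicates: the paper gives no detailed proof, merely noting that ``properties of convolution follow naturally from algebraic properties of power series,'' which is precisely the transport-to-$V=F[[x]]$ argument you wrote out. Your version is just a careful unpacking of that one sentence.
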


\begin{definition}  Define the sequence $e$ by ${e}_i=\delta_{i,0},$ the Kronecker delta function. 
\end{definition}

It will be convenient to use non-standard notation for binomial coefficients in this section. 

\begin{definition}  
For $k\ge 0$ and any integer $n$, define  $t^n_k =  B^ne_k=\begin{pmatrix} n \\ k\end{pmatrix}.$   
\end{definition}

It follows immediately that, for $n\ge 0$ and any integers $n_1$ and $n_2$,  
\begin{enumerate}
\item $\sum\limits_{i=0}^n t^n_ix^i = (1+x)^n$, 
\item $\sum\limits_{i=0}^\infty t^{-n}_i x^i = (1+x)^{-n},$ and
\item $t^{n_1}*t^{n_2}=t^{n_1+n_2}.$
\end{enumerate}

In the context of binomial transforms, 
\begin{equation}B^na_k = a_{k, n} = (t^n*a)_k\quad\text{and}\quad B^{-n}a_k = a_{k, -n} = (t^{-n}*a)_k.\end{equation}

We also have
\begin{enumerate}
\item $e*e=e$,
\item $e*a=a*e$ for any sequence $a$, and
\item $t^n*t^{-n}=e.$
\end{enumerate}

With the multiplication of convolution, the algebra of sequences has no zero divisors, has an identity $e$, and a sequence $a_i$ is invertible if and only if $a_0\ne 0.$

With respect to the binomial transform, we have
\begin{theorem}[Dwyer, Frankel]  Suppose $a_i$ and $b_j$ are sequences. Then, for all integers $n,$
\begin{equation}B^na*B^{-n}b = a*b.\end{equation}
\end{theorem}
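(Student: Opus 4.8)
The plan is to reduce the statement entirely to the convolution algebra assembled just above it, so that almost no new computation is required. First I would use equation~(5.3), which identifies the binomial transform with Cauchy product against the row sequences $t^n$: as sequences we have $B^n a = t^n * a$ and $B^{-n} b = t^{-n} * b$, where $t^n_k = \binom{n}{k}$ in the extended sense. Substituting these into the left-hand side turns the claim into a purely algebraic identity in the ring of sequences,
\begin{equation*}
(t^n * a) * (t^{-n} * b) = a * b .
\end{equation*}

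The second step is to rearrange the fourfold product using only the ring structure recorded in Proposition~5.2. By commutativity and associativity of the Cauchy product I would regroup
\begin{equation*}
(t^n * a) * (t^{-n} * b) = (t^n * t^{-n}) * (a * b).
\end{equation*}
Now I invoke the identity $t^n * t^{-n} = e$ (the third of the displayed facts about $e$ and $t^n$), which reduces the expression to $e * (a * b)$. Finally, since $e$ is the convolution identity, $e * (a*b) = a*b$, and the theorem follows. Linearity is not even needed here; the argument is just the statement that $t^n$ and $t^{-n}$ are mutually inverse units in a commutative ring, so conjugation/multiplication by them leaves the product $a*b$ unchanged.

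The main point requiring care is not any hard estimate but \emph{uniformity over all integers $n$}, including negative $n$, where $B^n$ is itself an inverse transform and $B^{-n}$ a forward one. I would check that the three enabling facts hold without sign restriction: that $B^{\pm n} = t^{\pm n} * (\cdot)$ remains valid for negative exponents under the extended binomial-coefficient convention~(1.1), that each Cauchy product is a finite sum for every coefficient (so associativity and commutativity from Proposition~5.2 genuinely apply to arbitrary sequences, not merely polynomials), and that $t^n * t^{-n} = e$ for every integer $n$. Each of these is already in hand from the preceding development, so the obstacle is essentially bookkeeping rather than substance. As an independent cross-check I would note the generating-function version: if $a,b$ have generating functions $p(x),q(x)$, then $B^n a$ and $B^{-n} b$ have generating functions $(1+x)^n p(x)$ and $(1+x)^{-n} q(x)$, whose product is $p(x)q(x)$, the generating function of $a*b$; this confirms the identity and makes transparent why the sign of $n$ is irrelevant.
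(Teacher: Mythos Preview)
Your argument is correct and mirrors the paper's proof essentially line for line: rewrite $B^{n}a*B^{-n}b$ as $(t^{n}*a)*(t^{-n}*b)$ via (5.2), regroup by commutativity and associativity, collapse $t^{n}*t^{-n}=e$, and finish with the generating-function remark $[x^{m}]\big((1+x)^{n}p(x)(1+x)^{-n}q(x)\big)=[x^{m}]\big(p(x)q(x)\big)$. No substantive difference.
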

\begin{proof}  Formally, 
\begin{align*}
B^na*B^{-n}b & = (t^n*a)*(t^{-n}*b)\\
&= (t^n*t^{-n})*(a*b)\\
&=e*(a*b) = a*b.
\end{align*}
Of course, if $p(x)$ and $q(x)$ represent $a$ and $b$, respectively, then
\begin{equation*}[x^m]\big( (1+x)^np(x)(1+x)^{-n}q(x)\big)=[x]^m\big(p(x)q(x)\big).\end{equation*}
\end{proof}
We recommend the interested reader to \text{MR0033265}, T. Fort's review of \cite{EF}.

In the following sections, we apply this theorem  to produce infinite families of summation formulas derived from binomial arrays.  In section 9, we express this theorem in terms of finite-dimensional representations of $SL(2, F).$

Using the binomial array construction, the following corollary describes an implementation of this theorem.  

\begin{corollary}[Generalized Vandermonde Identity]Fix  $m\ge 0,$ and let $a$ and $b$ be sequences.  For any $n$ in $\mathbb{Z},$ 
\begin{equation}\sum\limits_{i=0}^m a_ib_{m-i} = \sum\limits_{i=0}^m\ B^na_i\, B^{-n}b_{m-i}.\end{equation}
To compute using $B(a_i)$ and $B(b_i),$  
\begin{enumerate}
\item restrict $B(a_i)$ and $B(b_i)$ to the first $m+1$ rows,
\item rotate the restricted $B(b_i)$ by 180 degrees about column zero, and
\item dot products of columns in the same relative position have equal values.
\end{enumerate}
\end{corollary}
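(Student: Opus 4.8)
The plan is to derive the summation identity directly from Theorem 5.8 (the Dwyer--Frankel theorem) by reading off a single coefficient of the convolution. Since Theorem 5.8 states that $B^n a * B^{-n} b = a * b$ as sequences, two sequences are equal precisely when all their coefficients agree; in particular their $m$-th coefficients agree. I would therefore simply extract $[x^m]$ from both sides. By the definition of convolution in equation (5.1), the $m$-th coefficient of $a * b$ is $\sum_{i=0}^m a_i b_{m-i}$, while the $m$-th coefficient of $B^n a * B^{-n} b$ is $\sum_{i=0}^m (B^n a)_i (B^{-n} b)_{m-i}$. Equating these two expressions yields the displayed identity (5.4) immediately, and this works for every integer $n$ because Theorem 5.8 is stated for all $n \in \mathbb{Z}$.

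The remaining content is the geometric interpretation in terms of the arrays $B(a_i)$ and $B(b_i)$, and here the work is to verify that the three-step recipe computes exactly the right-hand sum. The key observation is equation (5.2): the entry $B^n a_i = a_{i, n}$ is precisely the entry in row $i$ and column $n$ of the array $B(a_i)$, and likewise $B^{-n} b_{m-i} = b_{m-i, -n}$ sits in row $m-i$ and column $-n$ of $B(b_i)$. So column $n$ of the first array holds the factors $(B^n a)_0, (B^n a)_1, \dots, (B^n a)_m$ reading down the first $m+1$ rows, which is why step (1) restricts to those rows.

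To match the indices in the product $(B^n a)_i (B^{-n} b)_{m-i}$, I would check that the $180$-degree rotation about column zero described in step (2) sends the entry of $B(b_i)$ in row $m-i$, column $-n$ to row $i$, column $n$. Rotating by $\pi$ about the vertical column-zero axis negates both the row offset (measured from the center of the restricted block) and the column index; with the restriction to rows $0$ through $m$, the row $m-i$ maps to row $i$ and column $-n$ maps to column $n$. After this rotation, the factor $(B^{-n} b)_{m-i}$ lands in the same row $i$ and same column $n$ as the factor $(B^n a)_i$ in the first array. Consequently the dot product of column $n$ of $B(a_i)$ with column $n$ of the rotated $B(b_i)$ is exactly $\sum_{i=0}^m (B^n a)_i (B^{-n} b)_{m-i}$, which by the identity equals $\sum_{i=0}^m a_i b_{m-i}$, a quantity independent of $n$. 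This explains step (3): the dot products of columns in the same relative position are all equal.

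The main obstacle is bookkeeping rather than mathematics: making the rotation precise so that the index shift is transparent. One must be careful that the $180$-degree rotation is taken about column zero (not about the center of the $(m+1)$-row block), and that the restriction to $m+1$ rows is what makes the row indices $m-i$ and $i$ complementary; without fixing the number of rows, the phrase ``same relative position'' in step (3) would be ambiguous. I would state the index correspondence $(m-i, -n) \mapsto (i, n)$ explicitly to remove any doubt, after which the computation of the dot product is routine and the equality with $\sum_{i=0}^m a_i b_{m-i}$ follows from (5.4).
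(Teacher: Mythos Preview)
Your proposal is correct and matches the paper's intent: the paper presents this corollary as an immediate consequence of the Dwyer--Frankel theorem (Theorem~5.5 in the paper's numbering, which you cite as 5.8) without giving a separate proof, so extracting the $m$-th coefficient of $B^na*B^{-n}b=a*b$ via (5.1) and then unpacking the array indexing via (5.2) is exactly the intended argument. Your explicit verification of the index correspondence $(m-i,-n)\mapsto(i,n)$ under the rotation is more detailed than anything in the paper, which simply illustrates the recipe with a worked example.
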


{\bf Example:} Consider the finite subarrays centered about column zero with $m=3$:
\begin{equation*}
\begin{bmatrix*}[r]
 3 & 3  & \fbox{3}   & 3   & 3  \\
 -2 & 1 & \fbox{4} &  7  & 10     \\
 0 &  -2 &  \fbox{$-1$}  & 3 & 10 \\
 0 & 0 &  \fbox{$-2$} &  -3  & 0 \\
\end{bmatrix*},\qquad 
\begin{bmatrix*}[r]
 2 & 2  & \fbox{2}   & 2   & 2  \\
 -2 & 0 & \fbox{2} &  4  & 6     \\
 1 &  -1 &  \fbox{$-1$}  & 1 & 5 \\
 -1 & 0 &  \fbox{$-1$} &  -2  & -1 \\
\end{bmatrix*}.
\end{equation*}
We rotate the second array to obtain
\begin{equation*}
\begin{bmatrix*}[r]
 3 & 3  & \fbox{3}   & 3   & 3  \\
 -2 & 1 & \fbox{4} &  7  & 10     \\
 0 &  -2 &  \fbox{$-1$}  & 3 & 10 \\
 0 & 0 &  \fbox{$-2$} &  -3  & 0 \\
\end{bmatrix*},\qquad 
\begin{bmatrix*}[r]
 -1 & -2  & \fbox{$-1$}   & 0   & -1  \\
 5 & 1 & \fbox{$-1$} &  -1  & 1     \\
 6 &  4 &  \fbox{2}  & 0 & -2 \\
 2 & 2 &  \fbox{2} &  2  & 2 \\
\end{bmatrix*}.
\end{equation*}
We see immediately that each of the five dot products between matching columns equal $-13.$

Without the rotation, the matched columns move about the initial columns in opposite directions by the same increment.  If both columns are from a single array, the matching is a telescoping effect. 
\vspace{10pt}

We note further corollaries to Theorem 5.5:
\begin{corollary}  Suppose $a^j_i$ are sequences for $1\le j \le k$. Then 
\begin{equation*}B^{n_1}a^1*B^{n_2}a^2*\dots* B^{n_k}a^k = B^{n_1+\dots +n_k}(a^1*a^2*\dots*a^k).\end{equation*}
\end{corollary}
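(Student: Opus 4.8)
The statement to prove is Corollary 5.7, which generalizes Theorem 5.5 (the Dwyer–Frankel identity $B^n a * B^{-n} b = a*b$) to a product of $k$ sequences with arbitrary exponents $n_1, \dots, n_k$:
$$B^{n_1}a^1 * B^{n_2}a^2 * \cdots * B^{n_k}a^k = B^{n_1+\dots+n_k}(a^1 * a^2 * \cdots * a^k).$$

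The plan is to reduce everything to the two facts already established in this section: first, the characterization $B^n a = t^n * a$ from equation (5.2), valid for all integers $n$; and second, the convolution law $t^{n_1} * t^{n_2} = t^{n_1+n_2}$ for the binomial sequences $t^n$, which holds for arbitrary integers by combining the three numbered properties following Definition 5.4 together with $t^n * t^{-n} = e$. Granting these, the proof is essentially the same formal manipulation used in Theorem 5.5, just iterated $k$ times.

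First I would rewrite each factor on the left using (5.2), turning $B^{n_j} a^j$ into $t^{n_j} * a^j$, so the left side becomes
$$(t^{n_1} * a^1) * (t^{n_2} * a^2) * \cdots * (t^{n_k} * a^k).$$
Next I would invoke the commutativity and associativity of convolution (Proposition 5.2, parts (1) and (3)) to regroup all the $t^{n_j}$ factors together and all the $a^j$ factors together, obtaining
$$(t^{n_1} * t^{n_2} * \cdots * t^{n_k}) * (a^1 * a^2 * \cdots * a^k).$$
Then I would collapse the first parenthesized product by repeatedly applying $t^{n_i} * t^{n_j} = t^{n_i + n_j}$, which yields $t^{n_1 + \dots + n_k}$. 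Applying (5.2) once more in the reverse direction identifies $t^{n_1+\dots+n_k} * (a^1 * \cdots * a^k)$ with $B^{n_1+\dots+n_k}(a^1 * \cdots * a^k)$, which is exactly the right side.

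The only genuine point requiring care — and the closest thing to an obstacle — is that the $n_j$ are allowed to be arbitrary integers, possibly of mixed sign, whereas the convolution law $t^{n_1} * t^{n_2} = t^{n_1+n_2}$ was highlighted only for nonnegative indices in the numbered list. I would therefore note that this law extends to all integer indices: for mixed signs it follows by writing the relevant sequences as power series $(1+x)^{n}$ and $(1+x)^{-m}$ and using that multiplication of power series adds exponents, with $t^n * t^{-n} = e$ serving as the cancellation identity. With this extension in hand, the regrouping and collapsing steps go through verbatim. A clean way to package the whole argument is by induction on $k$: the base case $k=1$ is trivial and $k=2$ is Theorem 5.5 with one factor untransformed, while the inductive step peels off the last factor $B^{n_k}a^k$, applies the $k=2$ case, and uses the associativity of $*$ — but the direct regrouping above is short enough that induction is more bookkeeping than necessity.
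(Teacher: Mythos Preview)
Your proof is correct and is exactly the natural extension of the paper's proof of Theorem~5.5; the paper itself states Corollary~5.7 without proof, so there is nothing further to compare. One small correction: the concern you flag as ``the only genuine point requiring care'' is not actually an issue, since the paper explicitly states item~(3) following Definition~5.4 --- namely $t^{n_1} * t^{n_2} = t^{n_1+n_2}$ --- for \emph{any integers} $n_1$ and $n_2$, not just nonnegative ones.
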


\begin{corollary}  Suppose $(a*b)_m=0.$ Then, for $n$ in $\mathbb{Z},$
\begin{equation*}(B^{n}a*B^{-n}b)_m =0.\end{equation*}
In particular, if $a_{k, n}=0,$ then $\sum\limits_{i=0}^k \begin{pmatrix} l \\ i\end{pmatrix} a_{k-i, n-l}=0$ for all $l$ in $\mathbb{Z}$.
\end{corollary}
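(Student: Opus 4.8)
The plan is to read the first assertion off directly from Theorem 5.5 and then to obtain the ``in particular'' statement by specializing that assertion to a convolution with the identity sequence $e$. For the displayed identity there is essentially nothing to do beyond invoking Theorem 5.5: since $B^n a * B^{-n} b = a * b$ as sequences, comparing the $m$-th coordinates gives $(B^n a * B^{-n} b)_m = (a * b)_m = 0$. The only content is that a single vanishing coordinate of $a * b$ survives every balanced pair of transforms, and this costs one line.

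For the second assertion I would first recast the hypothesis $a_{k,n} = 0$ as the vanishing of a convolution coordinate. Using the relation $a_{k,n} = B^n a_k = (t^n * a)_k$ established in section 5, set $c = t^n * a$, the sequence recording the $n$-th column of $B(a_i)$. Then $a_{k,n} = c_k = (e * c)_k$, so the pair $(e, c)$ together with the index $m = k$ satisfies the hypothesis of the first part. Applying that part with transform parameter $l \in \mathbb{Z}$ yields $(B^l e * B^{-l} c)_k = 0$ for every integer $l$.

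It then remains to evaluate the two transformed sequences and extract the coordinate. Using $t^{n_1} * t^{n_2} = t^{n_1 + n_2}$ and $t^n_i = \binom{n}{i}$, I compute $B^l e = t^l * e = t^l$, so that $(B^l e)_i = \binom{l}{i}$, and $B^{-l} c = t^{-l} * (t^n * a) = t^{n-l} * a$, whose $(k-i)$-th entry is $(t^{n-l} * a)_{k-i} = a_{k-i,\, n-l}$. Substituting into $(B^l e * B^{-l} c)_k = \sum_{i=0}^k (B^l e)_i (B^{-l} c)_{k-i}$ produces $\sum_{i=0}^k \binom{l}{i} a_{k-i,\, n-l} = 0$, which is exactly the asserted identity.

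I expect the only delicate point — and the main, if minor, obstacle — to be the index and sign bookkeeping in the last step: one must convolve $c$ against the \emph{inverse} transform $B^{-l}$, which is what produces the downward shift $n \mapsto n-l$ rather than $n+l$, and one must use the two-sided validity of $t^{n_1} * t^{n_2} = t^{n_1+n_2}$ for negative exponents so that $B^{-l} c = t^{n-l} * a$ holds for all $l \in \mathbb{Z}$, not merely for $l \ge 0$. Since each coordinate is a finite sum of $k+1$ well-defined array entries, no convergence question arises, and the standing binomial convention absorbs the vanishing coefficients that permit the sum to stop at $i = k$.
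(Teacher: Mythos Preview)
Your proof is correct and is exactly the argument the paper intends: the corollary is stated without proof because the first line is an immediate coordinate-wise reading of Theorem 5.5, and the ``in particular'' clause is the specialization $a\to e$, $b\to B^n a$ together with the group law $B^{-l}B^n=B^{n-l}$, which is precisely what you carry out. The bookkeeping with $t^{n_1}*t^{n_2}=t^{n_1+n_2}$ for arbitrary integer exponents is handled correctly.
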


Finally we may apply the symmetries from section 3 to obtain further  Vandermonde identities. For general binomial arrays $B(a_i)$ and $B(b_i)$, a second convolution occurs along the top line, now along diagonals to the right.

\begin{corollary}Fix  $m\ge 0,$ and let $B(a_i)$ and $B(b_i)$ be binomial arrays.  For any $r, s, t$ in $\mathbb{Z},$ 
\begin{equation*}\sum\limits_{i=0}^m a_{i, r+i}b_{m-i,s-i} = \sum\limits_{i=0}^m\ a_{i, r+i+t}b_{m-i,s-i-t}.\end{equation*}
\end{corollary}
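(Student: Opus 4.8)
The plan is to recognize that the two sides of the claimed identity are the same object evaluated at two values of a single parameter, and then to prove invariance under a unit shift of that parameter by induction on $m$. Since $(r+t)+(s-t)=r+s$, I would fix the total $N=r+s$ and set, for $u\in\mathbb{Z}$,
\begin{equation*} F_m(u)=\sum_{i=0}^m a_{i,\,u+i}\,b_{m-i,\,N-u-i}. \end{equation*}
The left-hand side is then $F_m(r)$ and the right-hand side is $F_m(r+t)$, so it suffices to show that $F_m(u)$ is independent of $u$; equivalently that $F_m(u+1)=F_m(u)$ for every $u$. This reframing is the whole point: the diagonal convolution along the top line is a one-parameter family indexed by the common shift, and the identity is just its constancy.

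First I would apply Pascal's recurrence to the $a$-factor, writing $a_{i,\,u+1+i}=a_{i-1,\,u+i}+a_{i,\,u+i}$, which splits $F_m(u+1)$ into two sums. In the sum carrying $a_{i,\,u+i}$ I would then use Pascal's recurrence on the $b$-factor in the form $b_{m-i,\,N-u-1-i}=b_{m-i,\,N-u-i}-b_{m-i-1,\,N-u-1-i}$; the term with $b_{m-i,\,N-u-i}$ reassembles exactly into $F_m(u)$. What remains is the difference of two residual sums,
\begin{equation*} A=\sum_{i=0}^m a_{i-1,\,u+i}\,b_{m-i,\,N-u-1-i},\qquad B=\sum_{i=0}^m a_{i,\,u+i}\,b_{m-i-1,\,N-u-1-i}, \end{equation*}
so that $F_m(u+1)-F_m(u)=A-B$.

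Next I would clean up the endpoints and reindex. In $A$ the $i=0$ term vanishes because $a_{-1,\,\cdot}=0$ (all entries above row $0$ are zero), and the shift $i\mapsto i+1$ turns $A$ into $\sum_{i=0}^{m-1} a_{i,\,(u+1)+i}\,b_{(m-1)-i,\,(N-1)-(u+1)-i}$, which is precisely $F_{m-1}(u+1)$ for total $N-1$. In $B$ the $i=m$ term vanishes because $b_{-1,\,\cdot}=0$, and the remaining sum is $F_{m-1}(u)$ for the same total $N-1$. Hence $F_m(u+1)-F_m(u)=F_{m-1}(u+1)-F_{m-1}(u)$: the unit-shift difference at level $m$ equals the unit-shift difference at level $m-1$. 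Descending to the base case $F_0(u)=a_{0,\,u}\,b_{0,\,N-u}=a_0b_0$, which is constant in $u$ because the top line of any binomial array is constant, gives $F_m(u+1)-F_m(u)=0$ for all $m$, and the corollary follows.

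The step I expect to demand the most care is the bookkeeping in the second and third paragraphs: one must apply Pascal's recurrence to the two factors in the correct order (to $a$ first, then to $b$), keep the column and antidiagonal indices aligned through the substitution $N\mapsto N-1$, and verify that the two discarded boundary terms really are the row-$(-1)$ entries so that they vanish. Once the residual sums $A$ and $B$ are correctly reindexed to level $m-1$, the descent is automatic. I would also remark that this realizes the promised diagonal companion to Corollary 5.6: there the rows are frozen while the columns slide oppositely, whereas here the antidiagonals slide oppositely, and the resulting invariance is exactly the analogue produced by the trapezoid-interchange symmetry of Section 3.
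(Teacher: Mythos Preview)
Your proof is correct. The reduction to showing $F_m(u+1)=F_m(u)$ and the two-step use of Pascal's recurrence are clean, the boundary terms $a_{-1,\cdot}$ and $b_{-1,\cdot}$ do vanish by the paper's convention that all entries above row $0$ are zero, and the descent $F_m(u+1)-F_m(u)=F_{m-1}(u+1)-F_{m-1}(u)$ (with total $N\mapsto N-1$) lands on the constant base case $F_0(u)=a_0b_0$ exactly as you claim.

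Your route, however, is genuinely different from the paper's. The paper does not argue inductively; it simply declares that this identity is obtained from Corollary~5.6 by applying the trapezoid-interchange symmetry of Section~3 (the same reflection that, in Proposition~4.1, converts the vertical hockey-stick rule into the diagonal one). In other words, the paper treats Corollary~5.9 as the image of the column-convolution invariance under the symmetry that sends columns to diagonals, with no further computation. What your approach buys is a self-contained argument using only Pascal's recurrence, one that applies directly to arbitrary sequences; the paper's symmetry is formulated only for polynomial $p(x)$, so your proof in fact handles the ``general binomial arrays'' hypothesis more transparently than the symmetry argument does. What the paper's approach buys is the conceptual picture you mention in your final remark: the identity is literally the reflected twin of Corollary~5.6, and once one accepts the symmetry there is nothing left to prove.
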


In the case  $B(a_i)$ and $B(b_i)$ correspond to polynomials, we obtain four additional convolution formulas along the lower edges of the proper regions, two per side.  We leave the indexing in these cases to the reader.

\section{Examples of Sequential Zeros}
We use the following strategy for generating families of summation formulas:
\begin{enumerate}
\item choose a binomial array with an interesting sequence near a regular progression of zeros in the array, 
\item pair with a binomial array with a simple initial condition, and
\item expand a simple convolution using Corollary 5.6.
\end{enumerate}

\begin{definition}
Suppose the entries of $B(a_i)$ are denoted $a_{k, n}.$  If $w, v,$ (resp. $u, v'$) are non-negative (resp. non-zero) integers, we define a {\bf progression of arithmetic type $(v, v')$} in $B(a_i)$ to be a sequence of the form $c_t=a_{u+tv, w+tv'}$ for all $t\ge 0.$  Such a progression $c^*_t$  is called a {\bf sequence of near zeros} if $c_t=0$ always, $c_t$ and $c^*_t$ have the same type, and $max(|u-u^*|,|w-w^*|)=1.$
\end{definition}

Thus we first identify elementary rules for generating binomial arrays with progressions of zeros.  Of course, when $p(x)$ is a polynomial, there is always an infinite  triangular region of zeros in $B(p(x))$ with a side on column 0.  We note two further types of specific interest:  those with initial condition
\begin{equation*}p(x)=sx+r\end{equation*}
for some nonzero integers $r$ and $s$, and  those with a skew-palindromic initial condition.

Consider the first case with initial condition $p(x)=sx+r.$  Using scalar multiplication, this case reduces further to two cases:   $r$ is positive and $s$ is negative, or both $r$ and $s$ are positive and $r<s.$  In the first case, the progression of zeros occurs on the right-hand side of $B(p(x))$, the second on the left-hand side.  It also happens that these are the only proper zeros in $B(p(x)).$ 

\begin{proposition}  
Suppose $r$ and $s$ are non-zero integers.  If $(|r|, |s|)=d>1,$ then $B(sx+r)$ and $B(\frac{sx+r}{d})$ have the same zeros.

Otherwise suppose $r$ and $s$ are both positive with $(r, s)=1.$ Then
\begin{enumerate}
\item  $a_{k, n}$ is a proper zero of $B(-sx+r)$ if and only if  $n=l(r+s)-1$ and $k=lr$ for $l\ge 1,$ and
\item $a_{k, n}$ is a proper zero of $B(sx+r)$ if and only if  $n=l(r-s)+1$, $k=lr$, and $r<s$ for $l\le -1.$
\end{enumerate}
\end{proposition}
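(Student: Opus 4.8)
My plan is to reduce every assertion to a single linear Diophantine equation, after first recording the entries of $B(sx+r)$ in closed form. Since $a_{k,n}$ is the coefficient of $x^k$ in $(1+x)^n(sx+r)$, the generating-function description of Section 2 gives $a_{k,n} = r\binom{n}{k} + c\binom{n}{k-1}$, where $c$ denotes the coefficient of $x$, so $c = \pm s$. The two reduction claims are then immediate from linearity: because $rB(a_i) = B(ra_i)$ (Definition 2.5), the arrays $B(sx+r)$ and $B\!\left((sx+r)/d\right)$ differ entrywise by the nonzero scalar $d$ and hence have identical zero sets, and scaling by $-1$ likewise lets me assume $r>0$. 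This leaves the two sign cases $c=-s$ and $c=+s$ with $r,s>0$ and $\gcd(r,s)=1$.

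The key algebraic step is a factorization valid for all integers $n$ simultaneously, so that I never separate the $n\ge 0$ and $n<0$ branches of (1.1). Using the falling-factorial form $\binom{n}{k}=n(n-1)\cdots(n-k+1)/k!$, which agrees with both branches of (1.1), the identity $k\binom{n}{k}=(n-k+1)\binom{n}{k-1}$ holds for every integer $n$ and every $k\ge 1$. Multiplying the entry formula by $k$ and substituting yields $k\,a_{k,n}=\binom{n}{k-1}\big[\,r(n+1)-(r-c)k\,\big]$. Consequently, for $k\ge 1$ with $\binom{n}{k-1}\ne 0$, one has $a_{k,n}=0$ if and only if $r(n+1)=(r-c)k$.

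Next I would dispose of the borders and the degenerate locus where the prefactor vanishes. The top line $k=0$ carries $a_{0,n}=r\ne 0$, and $\binom{n}{k-1}=0$ (with $k\ge 1$) forces $0\le n\le k-2$; in that range $\binom{n}{k}$ vanishes as well, so $a_{k,n}=0$ there only as an entry of the trivial triangular region of zeros noted in Section 2. Thus the proper zeros are exactly the solutions of $r(n+1)=(r-c)k$ with $k\ge 1$ lying outside that triangle, and the ``only proper zeros'' claim drops out of the iff rather than needing a separate argument. I expect this separation of the improper triangular zeros from the genuine ones to be the one place demanding real care; once it is in place, the remaining work is arithmetic and sign bookkeeping.

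Finally I would parametrize. For $c=-s$ the equation reads $r(n+1)=(r+s)k$; since $\gcd(r,r+s)=\gcd(r,s)=1$, divisibility forces $k=lr$ and $n+1=l(r+s)$, and $k\ge 1$ pins $l\ge 1$, giving the right-hand family $n=l(r+s)-1,\ k=lr$. For $c=+s$ the equation becomes $r(n+1)=(r-s)k$ with $\gcd(r,r-s)=1$, so again $k=lr,\ n+1=l(r-s)$; positivity of $k$ fixes the range of $l$, and since $r-s<0$ the whole family lands in the left half-plane $n<0$, matching the stated left-hand family after the obvious reindexing of $l$. A last check that $\binom{n}{k-1}\ne 0$ at these points (automatic on the left, and verified by $k\le n+1$ on the right) confirms that the listed entries are genuine proper zeros.
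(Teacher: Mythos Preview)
Your argument is correct and follows essentially the same route as the paper: write the entry as $r\binom{n}{k}+c\binom{n}{k-1}$, factor out a binomial coefficient, and solve the resulting linear Diophantine condition. The paper factors as $\frac{r(n+1)-(r+s)k}{n-k+1}\binom{n}{k}$ on the right half ($0<k<n$) and then treats $n<0$ separately by a sign-parity observation (for $B(-sx+r)$ with $n<0$ the two summands have the same sign, so no cancellation). Your choice to multiply by $k$ and use $k\binom{n}{k}=(n-k+1)\binom{n}{k-1}$, pulling out $\binom{n}{k-1}$ instead, is a genuine improvement: it is valid for all integers $n$ at once and neatly identifies the improper triangle as exactly the vanishing locus of the prefactor, so no separate left-half argument is needed.

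One point to tighten: in the $c=+s$ case you write ``since $r-s<0$'' and proceed, which silently assumes $r<s$. You should also dispose of $r>s$ explicitly---there the parametrized family $k=lr,\ n=l(r-s)-1$ with $l\ge 1$ satisfies $0\le n\le k-2$, so it lies entirely in the trivial triangle where $\binom{n}{k-1}=0$ and your equation does not apply; hence no proper zeros. The borderline $r=s$ (forcing $r=s=1$) deserves a word as well, since then the equation degenerates to $n=-1$ and the vanishing column is the left boundary rather than the proper interior. Finally, your ``obvious reindexing of $l$'' does not literally match the indices printed in the proposition; your parametrization $k=lr,\ n=l(r-s)-1,\ l\ge 1$ is the correct one and agrees with the paper's own later usage (Proposition~8.3).
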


\begin{proof} The first statement follows from the scalar multiplication property.

Next suppose $r$ and $s$ are both positive and relatively prime. Then the general term of $B(-sx+r)$ is given by, with $n$ any integer and $0< k< n$,
\begin{equation}a_{k, n}=r\begin{pmatrix}n \\ k\end{pmatrix}-s\begin{pmatrix}n \\ k-1\end{pmatrix}.\end{equation}
When $n>0,$ this difference equals 
\begin{equation*}\frac{r(n+1)-(r+s)k}{n-k+1}\begin{pmatrix}n \\ k\end{pmatrix}.\end{equation*}
and all zeros on the right-hand side of $B(-sx+r)$ are found.  When $n<0,$ the difference becomes a sum of two terms with the same parity, so no zeros occur on the left-hand side.

The remaining case follows a similar argument.  Now the zeros occur on the left-hand side only when $r<s$, and no zeros occur on the right.
\end{proof}

Examples of each case are given in Figures 5 and 6, respectively. In the general case, $B(-sx+r)$ and $B((r+s)x+r)$ interchange trapezoids.  Furthermore, we may consider the right-hand side of $B(-sx+r)$ to be a direct limit of hexagons of Clebsch-Gordan coefficients (\cite{Do}, \cite{D2}) under inclusion:
\begin{equation}M(r, s, 1)\hookrightarrow \frac{1}{2}M(2r, 2s, 1) \hookrightarrow \frac13 M(3r, 3s, 1) \hookrightarrow \frac14 M(4r, 4s, 1)\hookrightarrow \dots.\end{equation}

We now identify some values adjacent to these zeros. 
\begin{definition}[Figure 3]  Fix positive integers $r$ and $s$.  Define the integer sequence
\begin{equation*}C_t^{(r, s)}=\frac{1}{t}\frac{(rt+st)!}{(rt+1)!(st-1)!}=\frac{1}{t}\begin{pmatrix} rt+st\\ rt+1\end{pmatrix}\end{equation*}
for $t\ge 1.$
\end{definition}
Alternatively, one has
\begin{equation*}C_t^{(r, s)}=\frac{r+s}{rt+1}\begin{pmatrix} rt+st-1\\ rt\end{pmatrix}\end{equation*}
This definition gives another generalization of the Catalan numbers $C_t$, which occur when $r=s=1.$  When $s=1$, we obtain the Fu{\ss}-Catalan numbers.

\begin{figure}
\includegraphics[scale=.65]{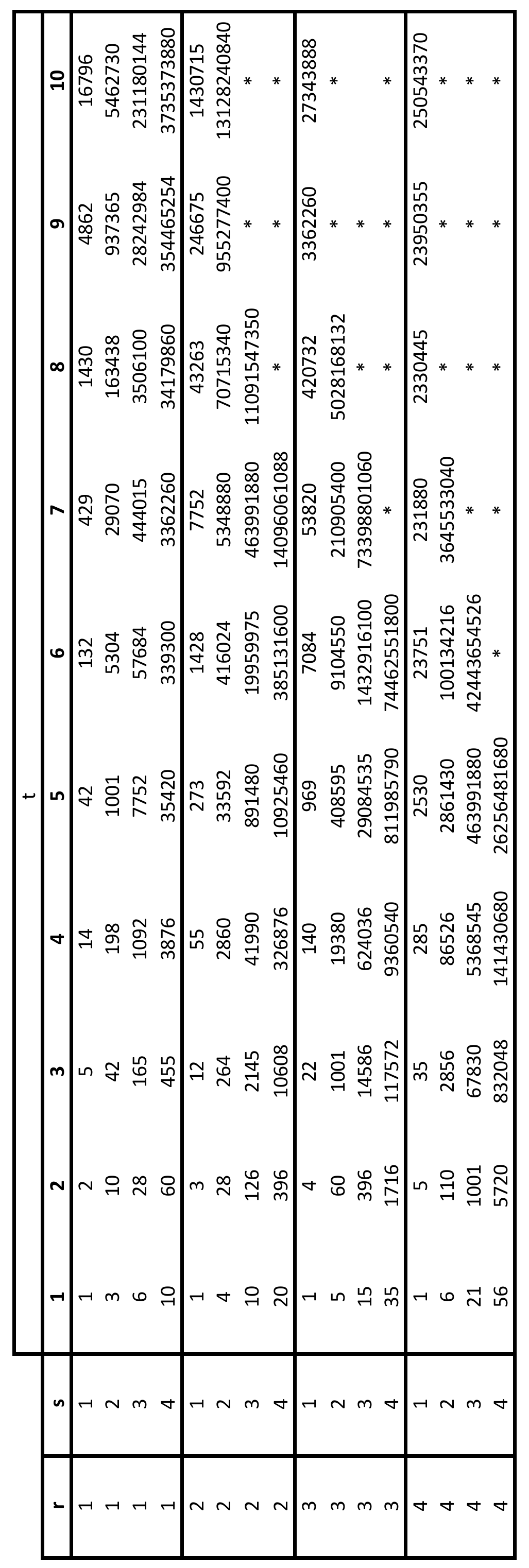}
\caption{Near-zero sequences $C_t^{(r, s)}$ for small parameters}
\label{}
\end{figure}

Up to sign, the sequence $C_t^{(r, s)}$ follows the zeros of $B(-sx+r)$ from below and to the lower right, while the sequence $C_t^{(s, r)}$ follows from above and to the right.  When $r=s$, $C_t^{(r, r)}=rC_{rt}$. This case is somewhat degenerate, as $rC_t$ follows the zeros of $B(-rx+r)=rB(-x+1).$

\vspace{10pt}

Next we define the palindromic conditions:
\begin{definition}
Let $p(x)$ be a polynomial of degree $m$.  $p(x)$ is  called {\bf palindromic} if $p(x)=x^mp(1/x)$.  On the other hand, if $p(x)=-x^mp(1/x),$ we call $p(x)$ {\bf skew-palindromic}. 
That is, $p(x)$ is palindromic (resp. skew-palindromic) if, 
\begin{equation*}\text{for}\quad 0\le k\le m,\quad a_{k}=a_{m-k}\ \text{(resp.}\ a_k=-a_{m-k}\text{)}.\end{equation*}
\end{definition}

If $p(x)$ is palindromic or skew-palindromic, then so is $(1+x)^np(x)$ for all $n\ge 0$; that is, a palindromic (resp. skew-palindromic) initial condition produces a binomial array with palindromic (resp. skew-palindromic) columns on the right. If $m$ is even and $p(x)$ is skew-palindomic, then the middle entry equals zero, and the following proposition holds immediately.
\begin{proposition} Suppose $p(x)$ is skew-palindromic of degree $m$.
\begin{enumerate}
\item If $m=2l,$ then $a_{l}=0.$ There is a diagonal of zeros in $B(p(x))$ to the right in even-numbered columns; specifically, for all $k$ in $\mathbb{N},$  $a_{l+k, 2k}=0.$
\item If $m=2l+1,$ then $a_{l+1, 1}=0.$  There is a diagonal of zeros in $B(p(x))$ to the right in odd-numbered columns; specfically, for all $k$ in $\mathbb{N}$, $a_{l+k+1, 2k+1}=0.$
\end{enumerate}
\end{proposition}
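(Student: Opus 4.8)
The plan is to leverage the fact, recorded just above the statement, that skew-palindromicity of the initial condition is inherited by every column of nonnegative index. Concretely, column $n$ of $B(p(x))$ records the coefficients of $(1+x)^n p(x)$, a polynomial of degree $m+n$; since $p(x)$ is skew-palindromic and this property is preserved under multiplication by $(1+x)$, the sequence $(a_{0,n}, a_{1,n}, \dots, a_{m+n,n})$ is skew-palindromic as well. Thus for each $n\ge 0$ I would start from the coefficient relation
\begin{equation*}
a_{k,n} = -a_{\,m+n-k,\,n}, \qquad 0 \le k \le m+n.
\end{equation*}

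The first step is the elementary observation driving the whole statement: a skew-palindromic polynomial of even degree $M$ has a vanishing central coefficient. Indeed, setting $k = M/2$ in the coefficient relation gives $b_{M/2} = -b_{M/2}$, so $b_{M/2}=0$ because $F$ has characteristic zero. I would then apply this to exactly those columns of $B(p(x))$ whose associated polynomial has even degree, that is, those $n$ for which $m+n$ is even, equivalently $n \equiv m \pmod 2$.

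It then remains only to carry out the index bookkeeping in the two parity cases. When $m = 2l$, the even-degree columns are those with $n$ even; writing $n = 2k$ for $k \in \mathbb{N}$, the central coefficient sits in row $(m+n)/2 = l+k$, giving $a_{l+k,\,2k}=0$, and the case $k=0$ recovers $a_l = 0$. When $m = 2l+1$, the even-degree columns are those with $n$ odd; writing $n = 2k+1$, the central row is $(m+n)/2 = l+k+1$, giving $a_{l+k+1,\,2k+1}=0$, with $k=0$ recovering $a_{l+1,1}=0$. Each asserted entry is a genuine coefficient of the corresponding column polynomial, since $0 \le (m+n)/2 \le m+n$, so it lies in the proper region of the array.

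I expect no serious obstacle here; the substantive content is entirely the inherited skew-palindromicity, which is already available. The only points requiring care are the index arithmetic — correctly matching the parity of $n$ to that of $m$ and locating the central coefficient's row — together with the trivial but necessary appeal to characteristic zero to pass from $b_{M/2} = -b_{M/2}$ to $b_{M/2}=0$. If desired, one could also remark that the statement asserts only the vanishing of these specific diagonal entries and makes no claim that they exhaust the zeros of the array.
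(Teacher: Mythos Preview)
Your proposal is correct and follows exactly the approach the paper intends: the paper states just before the proposition that skew-palindromicity is inherited by every column $(1+x)^n p(x)$ and that the middle coefficient of an even-degree skew-palindromic polynomial vanishes, then declares the proposition ``holds immediately.'' You have simply written out the parity bookkeeping that the paper leaves implicit, and your index computations are correct.
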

The simplest non-trivial example of the skew-palindromic case is given by the Catalan triangle array in Figure 5.  Vertical symmetry guarantees that four values about a zero on the diagonal have the same absolute value (below, above, to the right, to the lower right).  Using the symmetry that switches the initial condition to the trapezoid on the left, we obtain an unbroken diagonal of zeros on the left.  See Figure 6.

 \begin{figure}
 \includegraphics[scale=.6]{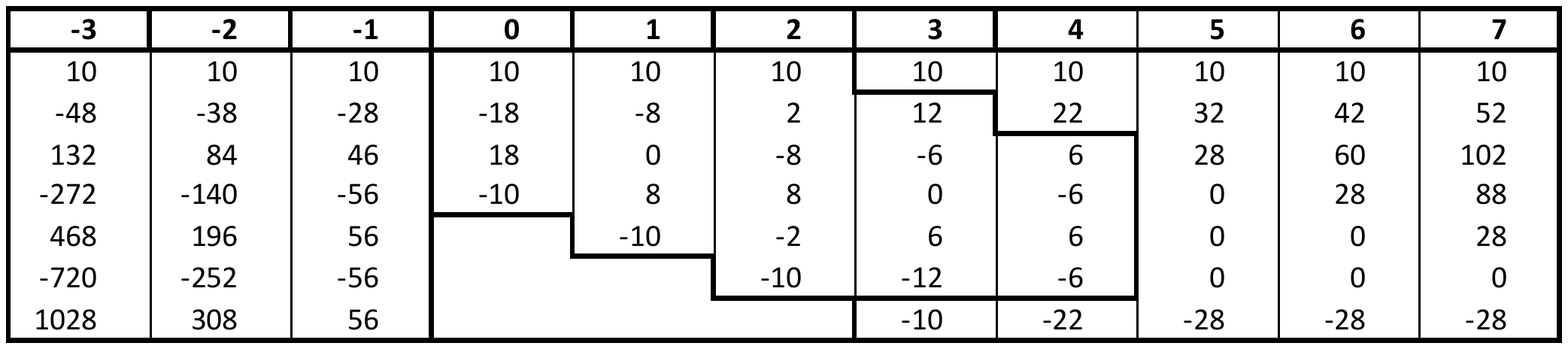}
\caption{$M(5, 5, 3)$ as a subarray followed by triangle of zeros}
\label{}
\end{figure}

We note two general sources of skew-palindromic initial conditions:
\begin{enumerate}
\item polynomials of the form $p_{r, s}(x)=(1-x^r)^s$ for positive integers $r$ and $s$ with odd $s$, and 
\item hexagons of Clebsch-Gordan coefficients (\cite{Do}, \cite{D2}) of the form $M(m, m, k)$ with $k$ odd and $0< k < m$.
\end{enumerate}
The first case is just the aeration of the coefficients of $(1-x)^s$ with strings of $r-1$ zeros.  Then $p_{r, s}$ is palindromic (resp. skew-palindromic) if and only if $s$ is even (resp. odd). 

For the second case, the initial condition is given by 
\begin{equation*}a_i = (-1)^i \begin{pmatrix}m-i\\ k-i\end{pmatrix}\begin{pmatrix}m-k+i\\ i\end{pmatrix}\end{equation*}
for $0\le i \le k,$ and $a_i=0$ otherwise.  Since the hexagon $M(r, r, 1)$ is a subarray of $B(-rx+r),$ we obtain yet another generalization of the Catalan numbers. 

Finally we note another source of zeros from hexagons of Clebsch-Gordan coefficients.  If the hexagon $M(m, n, k)$ is subsection of the binomial array $B(a_i)$, then there is a right triangle of zeros of height $k$ to the right of the hexagon. See Figure 4.  For a survey of vanishing rules for Clebsch-Gordan coefficients, see \cite{D2}.

\section{Example: Pascal's Triangle}

To set up a general framework, we review the classical case of Pascal's triangle.  Recall that the extended Pascal's triangle is $B(e)$ with entries $e_{k, n}.$ For fixed $n$ and  $k>0,$ we may section off the rectangle from the origin to $e_{k, n}$ to obtain three cases:
\begin{equation*}  
\begin{bmatrix}
  e_{0, -n} & \dots &  \fbox{$1$}      \\
  e_{1, -n} &  \dots &   0  \\
 \dots   & \dots  & \dots \\
  \fbox{$e_{k, -n} $}  & \dots & 0  \\
 \end{bmatrix},\qquad 
 \begin{bmatrix}
    \fbox{$1$}      \\
     0  \\
 \dots   \\
  \fbox{$e_{k, 0} $}   \\
 \end{bmatrix},\qquad
 \begin{bmatrix}
 \fbox{$1$}      & \dots & e_{0, n}  \\
 0 &  \dots &    e_{1, n}   \\
 \dots   & \dots  & \dots \\
  0    & \dots   & \fbox{$e_{k, n} $} \\
 \end{bmatrix}.
\end{equation*}
Applying Corollary 5.6, we obtain
\begin{equation*}\sum\limits_{i=0}^k e_{i, 0-l} e_{k-i, n+l}=e_{k, n},\end{equation*}
which summarizes as 
 \begin{proposition} [Chu-Vandermonde] Fix integers $m$ and $n$ and positive integer $k$.    Then
\begin{equation*}\sum\limits_{i=0}^k \begin{pmatrix} m\\ i\end{pmatrix} \begin{pmatrix} n \\ k-i\end{pmatrix}= \begin{pmatrix} m+n \\ k \end{pmatrix}.\end{equation*}
\end{proposition}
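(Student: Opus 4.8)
The plan is to recognize the Chu--Vandermonde identity as nothing more than the coefficient form, in degree $k$, of the convolution rule $t^{m}*t^{n}=t^{m+n}$ recorded as property (3) after Definition 5.4, and hence as a direct specialization of Theorem 5.5 to the extended Pascal's triangle $B(e)$. First I would rewrite the left-hand side as a Cauchy product. By Definition 5.4 we have $t^{r}_{i}=B^{r}e_{i}=\binom{r}{i}$ for every integer $r$, so the definition of convolution (Definition 5.1) gives
\begin{equation*}\sum_{i=0}^{k}\binom{m}{i}\binom{n}{k-i}=\sum_{i=0}^{k}t^{m}_{i}\,t^{n}_{k-i}=(t^{m}*t^{n})_{k}.\end{equation*}
Thus the whole assertion reduces to the single equation $(t^{m}*t^{n})_{k}=\binom{m+n}{k}=(t^{m+n})_{k}$. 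This is precisely the three-rectangle sectioning displayed just before the statement: the left, middle, and right rectangles realize the sequences whose convolution, read along the top line, is being equated.

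Next I would evaluate $t^{m}*t^{n}$ by the shifting theorem. Applying Theorem 5.5 with $a=t^{m}$ and $b=t^{n}$, and using the semigroup property $B^{t_{1}}B^{t_{2}}=B^{t_{1}+t_{2}}$ (so that $B^{r}t^{s}=t^{r+s}$) together with $t^{0}=e$, I obtain
\begin{equation*}t^{m}*t^{n}=B^{n}t^{m}*B^{-n}t^{n}=t^{m+n}*t^{0}=t^{m+n}*e=t^{m+n},\end{equation*}
since $e$ is the convolution identity. Extracting the coefficient of $x^{k}$ then yields the claim. Equivalently, and perhaps more transparently, I could invoke the generating-function dictionary at the end of Definition 5.1: because $\sum_{i\ge 0}t^{r}_{i}x^{i}=(1+x)^{r}$ holds for every integer $r$ (the Binomial Theorem for $r\ge 0$ and the binomial series for $r<0$), multiplication of series gives $(t^{m}*t^{n})_{k}=[x^{k}]\big((1+x)^{m}(1+x)^{n}\big)=[x^{k}](1+x)^{m+n}=\binom{m+n}{k}$. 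Either route finishes the proof in one line once the left-hand side has been identified as a convolution.

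The only point requiring care --- and the one I would flag as the main obstacle --- is uniformity across the sign cases. When $m$ or $n$ is negative, $\binom{m}{i}$ and $\binom{n}{k-i}$ are nonzero for infinitely many $i$, so the sum cannot be viewed as a truncated polynomial product cut off by vanishing terms, and a naive case analysis would split into four regimes. In the present framework this difficulty simply dissolves: the Cauchy product $(t^{m}*t^{n})_{k}=\sum_{i=0}^{k}t^{m}_{i}t^{n}_{k-i}$ is by definition the finite sum over $0\le i\le k$, and $(1+x)^{r}$ is a bona fide element of $F[[x]]$ for every integer $r$, so both Theorem 5.5 and the generating-function computation apply verbatim for all integers $m,n$ with no case distinction. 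I would therefore present the argument uniformly, observing that only $k\ge 0$ is needed, and that the familiar positive-integer Chu--Vandermonde identity is recovered by restricting to $m,n\ge 0$.
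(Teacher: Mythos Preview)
Your proposal is correct and takes essentially the same approach as the paper: both reduce the identity to the convolution rule $t^{m}*t^{n}=t^{m+n}$ (item (3) before (5.2)), equivalently to the generating-function product $(1+x)^{m}(1+x)^{n}=(1+x)^{m+n}$, with the paper phrasing the setup via Corollary~5.6 applied to $B(e)$ and you via Theorem~5.5 directly. Your remark that the formal power series framework handles all sign cases of $m,n$ uniformly matches the paper's brief comment about the four non-vanishing cases and the binomial series expansion.
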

Of course, we obtain four non-vanishing cases based on sign parity, and a vanishing sum occurs when $0 < m+n < k.$ Of course, all cases may be obtained directly and simply using a product of binomial series expansions.  In terms of Section 5, this is just item (3) before (5.2):
\begin{equation*}(B^me*B^ne)_k=B^{m+n}e_k.\end{equation*}

Now suppose we choose $l$ in Corollary 5.6 such that the columns translate to either the same or adjacent columns and sum over the length of the right-hand column.  Then we obtain the classical special formulas:
\begin{equation}\sum\limits_{i=0}^n \begin{pmatrix} n\\ i\end{pmatrix}^2= \begin{pmatrix} 2n \\ n \end{pmatrix},\qquad  \sum\limits_{i=0}^{n} \begin{pmatrix} n\\ i\end{pmatrix}\begin{pmatrix} n+1\\ i\end{pmatrix}= \begin{pmatrix} 2n+1 \\ n \end{pmatrix}.\end{equation}
Two principles here serve as a model for the general case.  We note the first as another corollary to Theorem 5.5. 
\begin{corollary} For sequences $a_i$ and $b_i$ and integer $n$, we have
\begin{equation*}B^na*B^nb=a*B^{2n}b\quad \text{and}\quad B^na*B^{n+1}b=a*B^{2n+1}b.\end{equation*}
\end{corollary}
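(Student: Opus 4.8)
The plan is to recognize both identities as immediate consequences of the convolution-semigroup property $t^{n_1}*t^{n_2}=t^{n_1+n_2}$ (item (3) preceding (5.2)) together with the commutativity and associativity of the Cauchy product (Proposition 5.2), in exactly the spirit of the proof of Theorem 5.5. The only difference from that theorem is that we now combine two $t$-factors whose exponents have the \emph{same} sign, so that the pair collapses to $t^{2n}$ (or $t^{2n+1}$) rather than to the identity $e$.

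First I would rewrite each binomial transform as a convolution with the appropriate $t$-sequence, using $B^n a = t^n * a$ from (5.1). For the first identity this gives
\begin{equation*}
B^n a * B^n b = (t^n * a)*(t^n * b).
\end{equation*}
Next I would regroup this four-fold product, using associativity and commutativity to bring the two $t$-factors together, then invoke the semigroup law with $n_1=n_2=n$ and a final reassociation onto $b$:
\begin{equation*}
(t^n * a)*(t^n * b) = (t^n * t^n)*(a * b) = t^{2n}*(a*b) = a*(t^{2n}*b) = a*B^{2n}b.
\end{equation*}
The second identity follows by the identical chain with one exponent raised by one: writing $B^n a * B^{n+1} b = (t^n*a)*(t^{n+1}*b)$, regrouping to $(t^n*t^{n+1})*(a*b)$, and applying $t^n*t^{n+1}=t^{2n+1}$ to obtain $t^{2n+1}*(a*b)=a*(t^{2n+1}*b)=a*B^{2n+1}b$.

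I do not anticipate a genuine obstacle, since the content is purely the semigroup law for the $t^n$ under convolution. The single point deserving a moment's care is that item (3) before (5.2) holds for \emph{all} integer exponents, not merely nonnegative ones, so that the regrouping and collapsing steps are legitimate for every $n$ (in particular for negative $n$, where $t^{2n}$ and $t^{2n+1}$ are read off the inverse binomial transform). As that property is already established, each identity reduces to the displayed one-line formal computation, completely parallel to the proof of Theorem 5.5.
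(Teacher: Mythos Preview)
Your proposal is correct and follows exactly the approach the paper intends: the corollary is stated as an immediate consequence of Theorem~5.5, and your chain of equalities is the same formal regrouping used there, merely replacing $t^n*t^{-n}=e$ by $t^n*t^n=t^{2n}$ (respectively $t^n*t^{n+1}=t^{2n+1}$). One minor slip: the identity $B^na=t^n*a$ is equation~(5.2), not~(5.1).
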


 In general, if $p(x)$  has degree $m$, then the convolution of a column of $B(p(x))$ with itself is given by
\begin{equation*}\sum\limits_{i=0}^{n+m} a_{i, n}a_{n+m-i, n}=\sum\limits_{i=0}^m \begin{pmatrix}2n\\ n+m-2i \end{pmatrix}a_i^2 + 2\sum\limits_{ i<j} \begin{pmatrix}2n\\ n+m-i-j \end{pmatrix}a_ia_j, \end{equation*}
and the convolution of adjacent columns of $B(p(x))$ is given by
\begin{align*}
\sum\limits_{i=0}^{n+m+1} a_{i, n}a_{n+m+1-i, n+1} &=\sum\limits_{i=0}^m \begin{pmatrix}2n+1\\ n+m+1-2i \end{pmatrix}a_i^2\\ 
&\qquad\qquad\qquad\qquad + 2\sum\limits_{ i<j} \begin{pmatrix}2n+1\\ n+m+1-i-j \end{pmatrix}a_ia_j.\end{align*}
These equalities follow immediately when expressed as
\begin{equation*}[x^{n+m}]((1+x)^{2n}(p(x))^2)\quad\text{and}\quad [x^{n+m+1}]((1+x)^{2n+1}(p(x))^2).\end{equation*}

We obtain simple closed formulas when $m=1$:

\begin{theorem}
Fix non-zero integers $r$ and $s$.  For $n>0,$ the columns of $B(sx+r)$ satisfy
\begin{equation*}\sum\limits_{i=0}^{n+1} a_{i, n}a_{n+1-i, n}=\big[(r+s)^2n+2rs\big] C_n\end{equation*}
and
\begin{equation*}\sum\limits_{i=0}^{n+2} a_{i, n}a_{n+2-i, n+1}=\frac{1}2\big[(r+s)^2n+4rs+2s^2\big] C_{n+1}.\end{equation*}
\end{theorem}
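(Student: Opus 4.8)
The plan is to specialize the two general convolution formulas displayed immediately before the statement to the degree-one case $p(x) = sx + r$, where $a_0 = r$, $a_1 = s$, and $a_i = 0$ for $i \geq 2$. Equivalently, and more transparently, I would work from the generating-function descriptions recorded just after those formulas: column $n$ of $B(sx+r)$ represents $(1+x)^n(sx+r)$, so the self-convolution of column $n$ is the coefficient $[x^{n+1}]\big((1+x)^{2n}(sx+r)^2\big)$ and the convolution of columns $n$ and $n+1$ is $[x^{n+2}]\big((1+x)^{2n+1}(sx+r)^2\big)$.

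First I would expand $(sx+r)^2 = r^2 + 2rs\,x + s^2 x^2$ and extract the relevant coefficients against $(1+x)^{2n}$ and $(1+x)^{2n+1}$. This reduces the first sum to $r^2\binom{2n}{n+1} + 2rs\binom{2n}{n} + s^2\binom{2n}{n-1}$ and the second to $r^2\binom{2n+1}{n+2} + 2rs\binom{2n+1}{n+1} + s^2\binom{2n+1}{n}$. Each sum is now a fixed $F$-linear combination of three binomial coefficients whose values cluster near the center of their rows.

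The remaining step is to rewrite these central binomial coefficients as rational multiples of Catalan numbers. For the even row I would use $\binom{2n}{n} = (n+1)C_n$ together with $\binom{2n}{n+1} = \binom{2n}{n-1} = nC_n$; substituting and factoring out $C_n$ collapses the bracket to $n(r^2 + s^2) + 2(n+1)rs = n(r+s)^2 + 2rs$, which is the first claim. For the odd row I would use the symmetry $\binom{2n+1}{n} = \binom{2n+1}{n+1}$, the Pascal/symmetry identity $\binom{2n+2}{n+1} = 2\binom{2n+1}{n+1}$ to get $\binom{2n+1}{n+1} = \tfrac12(n+2)C_{n+1}$, and the single ratio $\binom{2n+1}{n+2}/\binom{2n+1}{n+1} = n/(n+2)$ to get $\binom{2n+1}{n+2} = \tfrac{n}{2}C_{n+1}$. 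Factoring out $\tfrac12 C_{n+1}$ then leaves $nr^2 + 2(n+2)rs + (n+2)s^2$, which rearranges to $(r+s)^2 n + 4rs + 2s^2$, the second claim.

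There is no genuine obstacle here; the content is entirely the Catalan normalization of the coefficients produced by the general formula. The only place demanding care is the second, odd-row identity: unlike the even case, the three binomial coefficients are not all equal (here $\binom{2n+1}{n+2} \neq \binom{2n+1}{n+1}$), so the $r^2$-term carries a different Catalan multiple than the $rs$- and $s^2$-terms, and the asymmetric bracket $(r+s)^2n + 4rs + 2s^2$ is easy to misassemble if one wrongly assumes the near-symmetry of the even case persists.
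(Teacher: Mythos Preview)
Your proposal is correct and follows exactly the route the paper intends: the theorem is stated immediately after the general column-convolution formulas and their generating-function descriptions $[x^{n+m}]((1+x)^{2n}p(x)^2)$ and $[x^{n+m+1}]((1+x)^{2n+1}p(x)^2)$, with the phrase ``We obtain simple closed formulas when $m=1$'' signaling that one merely substitutes $p(x)=sx+r$. Your Catalan normalizations of the resulting binomial coefficients are all correct, so there is nothing to add.
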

Of course, choosing odd $r$ and even $s$ in the second equality implies, for positive $n$, $C_n$ odd implies $n$ odd. In fact (\cite{AC}; \cite{Ko}, Theorem 13.1), $C_n$ is odd if and only if $n=2^k-1.$
\vspace{10pt}

Next the palindromic or skew-palindromic conditions allow further simplification of our sums.

\begin{proposition}
Suppose $p(x)=a_0+\dots+a_mx^m$ and $q(x)=b_0+\dots+b_mx^m$ have degree $m,$ and denote the entries of $B(p(x))$ $($resp. $B(q(x)))$ by $a_{k, n}$ $($resp. $b_{k, n})$.  If $p(x)$ and $q(x)$ are palindromic, then, for all $l$ in $\mathbb{Z}$,
\begin{align*} \sum_{k=0}^m a_{k, -l}b_{m-k, l}= \sum_{k=0}^{m} a_kb_{m-k}  &=2\sum_{k=0}^{\frac{m-1}2} a_kb_k\quad (m\ \text{odd}), \\
&=a_{\frac{m}2}b_{\frac{m}2}+2\sum_{k=0}^{\frac{m-2}2} a_kb_k\quad (m\ \text{even}).\end{align*}
On the other hand, if $p(x)$  and $q(x)$ are both skew-palindromic, then
\begin{equation*} \sum_{k=0}^m a_{k, -l}b_{m-k, l}= \sum_{k=0}^m a_{k}b_{m-k}=-2\sum_{k=0}^{\lfloor \frac{m}2\rfloor} a_kb_k. \end{equation*}
Finally, if $p(x)$  is palindromic and $q(x)$ is skew-palindromic, then
\begin{equation*} \sum_{k=0}^m a_{k, -l}b_{m-k, l}= \sum_{k=0}^m a_{k}b_{m-k}=0.\end{equation*}
\end{proposition}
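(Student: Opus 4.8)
The plan is to treat the two equalities in each case separately, because they have completely different origins. The first equality, $\sum_{k=0}^m a_{k,-l}b_{m-k,l}=\sum_{k=0}^m a_k b_{m-k}$ for every $l\in\mathbb{Z}$, does not use the palindromic hypotheses at all: it is exactly the Generalized Vandermonde Identity (Corollary 5.6) specialized to $n=-l$. Recalling the identifications $B^n a_k=a_{k,n}$ and $B^{-n}b_k=b_{k,-n}$ from (5.2), Corollary 5.6 reads $\sum_{i=0}^m a_i b_{m-i}=\sum_{i=0}^m a_{i,-l}\,b_{m-i,l}$, which is the claim. So I would state this reduction first and thereby isolate the genuine content, namely evaluating the single fixed sum
\[
S:=\sum_{k=0}^m a_k b_{m-k}=[x^m]\bigl(p(x)q(x)\bigr)
\]
under each symmetry hypothesis via an elementary pairing argument.

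For the palindromic case I would first use $b_{m-k}=b_k$ to rewrite $S=\sum_{k=0}^m a_k b_k$, and then observe that palindromicity of both factors gives $a_k b_k=a_{m-k}b_{m-k}$, so the summand is invariant under the involution $k\mapsto m-k$ of $\{0,1,\dots,m\}$. Pairing $k$ with $m-k$ finishes it: when $m$ is odd every index is matched with a distinct partner, yielding $2\sum_{k=0}^{(m-1)/2}a_k b_k$; when $m$ is even the single fixed point $k=m/2$ is left unpaired, yielding the isolated term $a_{m/2}b_{m/2}$ together with $2\sum_{k=0}^{(m-2)/2}a_k b_k$.

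The skew-palindromic case runs in parallel, the only change being a sign. Here $b_{m-k}=-b_k$ gives $S=-\sum_{k=0}^m a_k b_k$, and the two minus signs cancel in $a_{m-k}b_{m-k}=(-a_k)(-b_k)=a_k b_k$, so the summand is again $k\mapsto m-k$ invariant. The point worth flagging is the even case: the middle term vanishes because $a_{m/2}=-a_{m/2}$ forces $a_{m/2}=0$ in characteristic zero (and likewise $b_{m/2}=0$), which is precisely what lets the single formula $-2\sum_{k=0}^{\lfloor m/2\rfloor}a_k b_k$ cover both parities without splitting into subcases. I would make that vanishing explicit.

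The mixed case I would settle by a global symmetry argument rather than term-by-term pairing. Using skew-palindromicity of $q$ gives $S=-\sum_{k=0}^m a_k b_k$, while re-indexing $S$ by $k\mapsto m-k$ and then using palindromicity of $p$ gives $S=\sum_{k=0}^m a_{m-k}b_k=\sum_{k=0}^m a_k b_k$; adding the two expressions yields $2S=0$, hence $S=0$ since $F$ has characteristic zero. The main obstacle throughout is not conceptual — there is no deep step — but bookkeeping: keeping the floor/ceiling index ranges consistent across the two parities and invoking characteristic zero at exactly the point where a middle term must drop out. That characteristic-zero input, already standing in the paper's conventions, is the one hypothesis one must be careful not to omit.
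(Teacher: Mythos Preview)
Your proof is correct. The paper states this proposition without proof, evidently regarding it as a direct consequence of Corollary~5.6 together with the elementary symmetry $k\mapsto m-k$, which is exactly the argument you give; your explicit handling of the middle term in the even skew-palindromic case and the characteristic-zero observation are the only points needing care, and you address both.
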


\begin{proposition}
Suppose $p(x)=a_0+\dots+a_mx^m$ has degree $m$ and $q(x)=b_0+\dots+b_{m+1}x^{m+1}$ has degree $m+1$.  If  $p(x)$ is palindromic, then, for all $l$ in $\mathbb{Z}$,
\begin{equation*} \sum_{k=0}^m a_{k, -l}b_{m-k+1, l+1}= \sum_{k=0}^{m} a_kb_{m-k+1}  =\sum_{k=0}^m a_kb_{k+1}.\end{equation*}
If $p(x)$ is palindromic and $b_i=Ba_i,$ then
\begin{equation*}(B^{-l}a*B^{l+1}a)_{m+1}=(a*a)_m+(a*a)_{m-1}.\end{equation*}
On the other hand, if $p(x)$ is skew-palindromic, then
\begin{equation*} \sum_{k=0}^m a_{k, -l}b_{m-k+1, l+1}= \sum_{k=0}^m a_{k}b_{m-k+1}=-\sum_{k=0}^m a_kb_{k+1}. \end{equation*}
If $p(x)$ is skew-palindromic and $b_i=Ba_i$, then
\begin{equation*}(B^{-l}a*B^{l+1}a)_{m+1}=-(a*a)_{m}+(a*a)_{m-1}.\end{equation*}
\end{proposition}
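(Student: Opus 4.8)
The common engine for all four identities is Theorem 5.5 (equivalently Corollary 5.7) together with the reflection symmetry forced by the (skew-)palindromic hypothesis, exactly as in the preceding equal-degree proposition. The only new wrinkle is that $\deg q=\deg p+1$, so every convolution is read off one index beyond the common degree; controlling the resulting top term is where the argument will concentrate. I would treat the two specializations $b_i=Ba_i$ first, since there the product is taken in full and no truncation intervenes, and then return to the general-$q$ statements.

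For the identities with $b_i=Ba_i$ I would not expand anything by hand. Since $B^{l+1}a=B^{l}(B^{1}a)$ and the exponents $-l$ and $l+1$ sum to $1$, Corollary 5.7 gives at once
\[
B^{-l}a*B^{l+1}a=B^{1}(a*a),\qquad\text{so}\qquad (B^{-l}a*B^{l+1}a)_{m+1}=(a*a)_{m+1}+(a*a)_{m},
\]
which is already independent of $l$. The key structural observation is that the self-convolution $a*a$ is palindromic about index $m$: if $a_i=\varepsilon\,a_{m-i}$ with $\varepsilon=\pm1$, then the two sign flips cancel ($\varepsilon^{2}=1$) and $(a*a)_{m+j}=(a*a)_{m-j}$ for every $j$, whether $p$ is palindromic or skew-palindromic. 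Taking $j=1$ replaces $(a*a)_{m+1}$ by $(a*a)_{m-1}$ and collapses the right-hand side to $(a*a)_{m}+(a*a)_{m-1}$. This settles the palindromic case immediately; for the skew-palindromic case the same computation applies verbatim, and the one point I would scrutinize is the sign attached to the $(a*a)_m$ term, since the reflection of $a*a$ is insensitive to the sign of $\varepsilon$.

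For the general-$q$ statements I would pass to generating functions. Because $a_{k,-l}=[x^{k}]\bigl((1+x)^{-l}p\bigr)$ and $b_{j,l+1}=[x^{j}]\bigl((1+x)^{l+1}q\bigr)$, the full Cauchy product satisfies
\[
(B^{-l}a*B^{l+1}b)_{m+1}=[x^{m+1}]\bigl((1+x)^{-l}p\,(1+x)^{l+1}q\bigr)=[x^{m+1}]\bigl((1+x)\,p(x)q(x)\bigr),
\]
which is manifestly free of $l$; this is the generating-function form of Theorem 5.5 and supplies the invariance in one line. The stated sum runs only to $k=m$, so it equals this full product minus its top term $a_{m+1,-l}\,b_{0}$, and the real task is to match the truncated sum to the stated middle expression $\sum_k a_kb_{m-k+1}$. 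Here the degree hypothesis $\deg p=m$ (so that $a_{m+1}=0$ in column $0$) and the palindromic relation $a_k=a_{m-k}$ must interact to account for that boundary contribution; granting the reconciliation, the final equality $\sum_k a_kb_{m-k+1}=\sum_k a_kb_{k+1}$ is pure palindrome symmetry (the substitution $k\mapsto m-k$), and the skew relation $a_k=-a_{m-k}$ inserts the global minus sign in the third displayed identity.

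The step I expect to be the genuine obstacle is precisely this boundary bookkeeping created by the degree gap. In the equal-degree proposition the two columns' indices sum to $0$, Dwyer–Frankel applies verbatim, and the truncated sum is automatically the entire convolution; here the columns sum to $1$, so one must either peel off a single instance of Pascal's recurrence $b_{m-k+1,l+1}=b_{m-k,l}+b_{m-k+1,l}$ to return to the balanced situation, or show directly that the omitted term $a_{m+1,-l}b_0$ is absorbed by the degree and symmetry hypotheses. Getting this top term—and the accompanying sign in the skew-palindromic case—exactly right is the delicate part; everything else is the linearity of the binomial transform and the reflection symmetries already in hand from Section 5 and the preceding proposition.
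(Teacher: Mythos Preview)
The paper states this proposition without proof, so there is nothing to compare against; what matters is whether your approach goes through. For the specializations $b_i=Ba_i$ it does: $B^{-l}a*B^{l+1}a=B^{1}(a*a)$ by Corollary~5.7, Pascal's recurrence gives $(a*a)_{m+1}+(a*a)_m$, and the palindromy of $a*a$ about index $m$ (valid for either sign of $\varepsilon$, as you note) replaces $(a*a)_{m+1}$ by $(a*a)_{m-1}$. This proves the second display.

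Your hesitation about the fourth display is justified and should be upgraded to a counterexample. Since the palindromy of $a*a$ is insensitive to $\varepsilon$, your argument yields $(a*a)_m+(a*a)_{m-1}$ in the skew case as well; with $p(x)=1-x$ one has $a*a=(1,-2,1)$ and $(B^{-l}a*B^{l+1}a)_2=-1=(a*a)_1+(a*a)_0$, not $-(a*a)_1+(a*a)_0=3$. The printed sign on $(a*a)_m$ is in error.

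Your worry about the boundary term in the general-$q$ identities is likewise not cosmetic. The full sum $\sum_{k=0}^{m+1}a_{k,-l}\,b_{m+1-k,l+1}$ is $l$-invariant by Theorem~5.5 and equals $(a*b)_{m+1}+(a*b)_m$, but truncating at $k=m$ drops $a_{m+1,-l}\,b_0$, which is generically nonzero for $l\neq0$ (column $-l$ of $B(p)$ has degree $m-l$ when $l<0$ and is a genuine power series when $l>0$). With $p(x)=1+x$, $q(x)=1+x+x^2$ the truncated sum is $4,4,3$ at $l=0,1,-1$ respectively, while the printed middle $\sum a_kb_{m-k+1}$ equals $2$; so neither the $l$-invariance of the truncated sum nor its equality with the middle holds as stated. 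The natural repair sums to $m+1$ and replaces the middle by $(a*b)_{m+1}+(a*b)_m$; only the final palindromic reindexing $k\mapsto m-k$ survives unchanged. Your instinct that the degree gap is ``the genuine obstacle'' was correct---it is a defect in the statement, not in your method.
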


\section{Example: Catalan Numbers}

In this section, we consider various binomial arrays concerning the Catalan numbers, both directly and indirectly.  Following the extended Pascal's triangle, the next simplest initial conditions are $1\pm x$.  Now $1+x$ is the second column of the extended Pascal's triangle, so we consider the array $B(1-x)$ and its reflection $B(1+2x).$  For this section, we reserve $a_{k, n}$ (resp. $b_{k, n}$) for the values in the former (resp. latter).

\begin{figure}
\includegraphics[scale=.6]{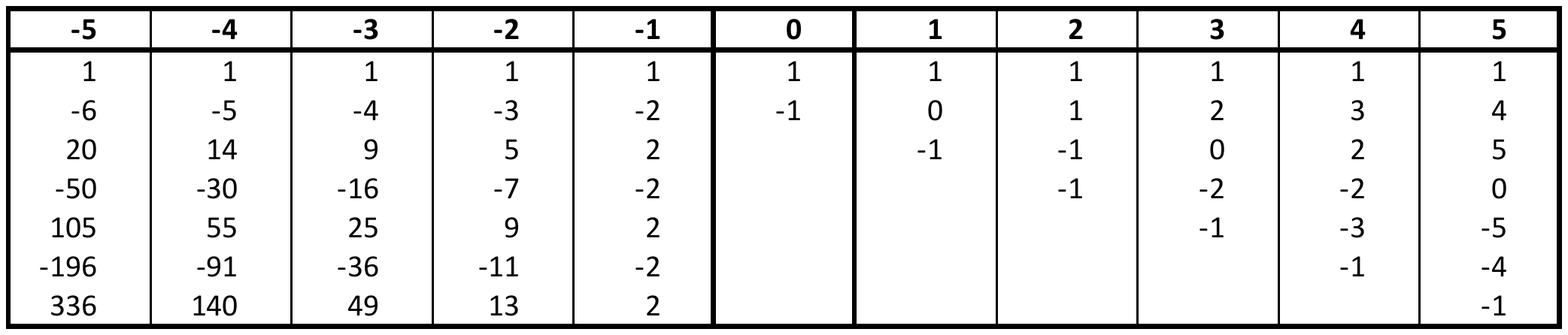}
\caption{Catalan number array $B(1-x)$: $C_{\frac{n+1}2}$ on the right, below/above zeros}
\label{}
\end{figure}

One has immediately that 
\begin{proposition}[Figure 5]  
For integers $k$ and $n$, 
\begin{equation}a_{k, n}  = \begin{pmatrix} n\\ k \end{pmatrix}- \begin{pmatrix} n\\ k-1 \end{pmatrix} \end{equation}
Specifically, for $n> 0$ and $0<k\le n,$ 
\begin{equation}a_{k, n} = \frac{n-2k+1}{n-k+1} \begin{pmatrix} n\\ k \end{pmatrix}\end{equation}
and, for $n>1$ and $k>0,$
\begin{equation}a_{k, -1}=(-1)^{k+1}2,\quad a_{k, -n} = (-1)^k\ \frac{-n-2k+1}{n-1} \begin{pmatrix} n+k-2\\ k \end{pmatrix}\end{equation}
\end{proposition}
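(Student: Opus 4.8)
The three displayed formulas are really one result together with its specializations: formula (8.1) expresses every entry of $B(1-x)$ as a difference of two binomial coefficients, while (8.2) and (8.3) are the reductions of (8.1) in the regimes $n>0$ and $n<0$ respectively. The plan is therefore to establish (8.1) once and for all integers $k$ and $n$, and then to simplify in each regime.

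For (8.1) I would use the convolution description of the array. The polynomial $1-x$ corresponds to the sequence $a$ with $a_0=1$, $a_1=-1$, and $a_i=0$ otherwise, i.e. $a=e-e^1$. By (5.2) we have $a_{k,n}=(t^n*a)_k$ for every integer $n$, where by Definition 5.4 the coefficients $t^n_i=\binom{n}{i}$ are defined for all integers $n$ via convention (1.1). Since only $a_0$ and $a_1$ are nonzero, the convolution collapses to two terms,
$$a_{k,n}=t^n_k\,a_0+t^n_{k-1}\,a_1=\binom{n}{k}-\binom{n}{k-1},$$
which is (8.1), valid for all integers $k,n$. Equivalently, one may invoke linearity: by Proposition 2.7, $B(1-x)=P(0)-P(1)$, whose entries are $\binom{n}{k}$ and $\binom{n}{k-1}$ by (2.3).

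Formula (8.2) is then a routine reduction of (8.1) in the range $n>0$, $0<k\le n$, where every binomial coefficient is ordinary. I would factor $\binom{n}{k}$ out of the difference using the ratio $\binom{n}{k-1}=\frac{k}{\,n-k+1\,}\binom{n}{k}$; collecting over the common denominator $n-k+1$ produces the prefactor $\frac{(n-k+1)-k}{n-k+1}=\frac{n-2k+1}{n-k+1}$, which is exactly (8.2).

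The negative columns in (8.3) are where the real work lies. Setting $n\ge 1$ and expanding (8.1) gives $a_{k,-n}=\binom{-n}{k}-\binom{-n}{k-1}$, and the decisive step is the negative-index rule from (1.1), namely $\binom{-n}{j}=(-1)^j\binom{n+j-1}{j}$. Applying it converts the \emph{difference} into a \emph{sum} once $(-1)^k$ is pulled out: $a_{k,-n}=(-1)^k\bigl[\binom{n+k-1}{k}+\binom{n+k-2}{k-1}\bigr]$. For $n=1$ both binomials reduce to $1$ and the first formula of (8.3) follows by direct evaluation; for $n\ge 2$ I would rewrite both terms as rational multiples of the single coefficient $\binom{n+k-2}{k}$ (using $\binom{n+k-1}{k}=\frac{n+k-1}{k}\binom{n+k-2}{k-1}$ and $\binom{n+k-2}{k-1}=\frac{k}{n-1}\binom{n+k-2}{k}$) and collect the prefactor into a single rational function of $n$ and $k$. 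The main obstacle is precisely this sign-and-prefactor bookkeeping: the passage from a difference to a sum, together with the collapse to one binomial coefficient, is exactly where a stray factor of $-1$ is easiest to introduce. I would guard against this by checking the final closed form against a directly computed column, for instance column $-2$, which the recurrence $a_{k,-2}=a_{k,-1}-a_{k-1,-2}$ produces as $(1,-3,5,-7,9,\dots)$.
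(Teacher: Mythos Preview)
Your approach is correct and is exactly the sort of argument the paper has in mind: the text introduces the proposition with the words ``One has immediately that'' and gives no proof, so there is nothing further to compare. The derivation of (8.1) via $B(1-x)=P(0)-P(1)$ (or equivalently the two-term convolution $(t^n*a)_k$) is the intended one, and (8.2) is the obvious factorization.

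One remark worth making: your promised sanity check on column $-2$ is not merely prudent, it is decisive, because the printed (8.3) carries a global sign error. Following your own steps gives
\[
a_{k,-n}=(-1)^k\Bigl[\tbinom{n+k-1}{k}+\tbinom{n+k-2}{k-1}\Bigr]
=(-1)^k\,\frac{n+2k-1}{n-1}\binom{n+k-2}{k},
\]
so that $a_{k,-1}=(-1)^k\cdot 2$ and $a_{k,-2}=(-1)^k(2k+1)$, matching your computed column $(1,-3,5,-7,\dots)$. The paper's $(-1)^{k+1}2$ and $(-1)^k\frac{-n-2k+1}{n-1}\binom{n+k-2}{k}$ are off by exactly this sign. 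So your ``main obstacle'' paragraph is well placed: carry the computation through as you describe, trust your check against the recurrence, and note the typo.
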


By the skew-palindromic property or by direct substitution, we obtain a progression of zeros of type $(1, 2)$ starting at $a_{1, 1}$. If we consider the near zero sequence at points $(t, 2t)$, we have 
\begin{equation*}a_{t, 2t}=\frac{1}{t+1} \begin{pmatrix} 2t\\ t \end{pmatrix}=C_t,\end{equation*}
the sequence of Catalan numbers.  Alternatively, we note
\begin{definition}[Segner's Recurrence]
The Catalan numbers are defined recursively as follows:
\begin{equation}C_0=1,\qquad C_{n+1}=\sum\limits_{i=0}^n C_iC_{n-i}.\end{equation}
\end{definition}
See the first rows of Figures 3 or 8 for $C_1$ through $C_{10},$ or see \cite{Ko}, Appendix B, for the first 100 Catalan numbers. Interpretations of the Catalan numbers by way of enumeration are vast in number;  see  \cite{St2} for over 200 examples.   

In Segner's recurrence, we see that the Catalan numbers are defined by iterated convolution. For comparison with Definition 6.3, we also note the alternative formula for $t\ge 1,$
\begin{equation}C_t=\frac{1}{t}\frac{(2t)!}{(t+1)!(t-1)!}=\frac{1}{t}\begin{pmatrix} 2t\\ t+1 \end{pmatrix}.\end{equation}

In its right-hand side trapezoid, $B(1-x)$ is subject to all rules governing Clebsch-Gordan coefficients from \cite{Do} and \cite{D2}; that is, we may consider this trapezoid as a direct limit of hexagons of Clebsch-Gordan coefficients under inclusion:
\begin{equation}\frac{1}{r}  M(r, r, 1) \hookrightarrow \frac{1}{r+1}M(r+1, r+1, 1) \hookrightarrow \frac{1}{r+2}M(r+2, r+2, 1)\hookrightarrow \dots.\end{equation}

If we now consider $B(1+2x)$,  one has 
\begin{proposition}[Figure 6] For positive integers $n$ and $k$,
\begin{equation*}b_{k, n}= \begin{pmatrix} n\\ k \end{pmatrix}+2 \begin{pmatrix} n\\ k-1 \end{pmatrix} = \frac{n+k+1}{n-k+1} \begin{pmatrix} n\\ k \end{pmatrix},\end{equation*}
and
\begin{align*}b_{k, -n}&= (-1)^{k}\bigg[\begin{pmatrix} n+k-1 \\ k \end{pmatrix}-2 \begin{pmatrix} n+k-2\\ k-1 \end{pmatrix}\bigg]\\
& = (-1)^{k}\ \frac{n-k-1}{n+k-1}\begin{pmatrix} n+k-1\\ k \end{pmatrix}\end{align*}
\end{proposition}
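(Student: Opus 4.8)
The plan is to read the two nonzero initial values directly off $q(x)=1+2x$, namely $b_0=1$ and $b_1=2$ with $b_i=0$ for $i>1$, and then evaluate the binomial transform and its inverse term by term. For the positive index I would use the generating-function characterization $b_{k,n}=[x^k]\big((1+x)^n(1+2x)\big)$; writing $(1+x)^n(1+2x)=(1+x)^n+2x(1+x)^n$ and extracting the coefficient of $x^k$ yields $b_{k,n}=\binom{n}{k}+2\binom{n}{k-1}$, which is the first asserted equality. Equivalently one substitutes the surviving values into $B^n b_k=\sum_{i=0}^n\binom{n}{i}b_{k-i}$, keeping only the terms $i=k$ and $i=k-1$.

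Next I convert this to the closed form. Using the absorption identity $\binom{n}{k-1}=\tfrac{k}{\,n-k+1\,}\binom{n}{k}$, I factor $\binom{n}{k}$ out of $\binom{n}{k}+2\binom{n}{k-1}$ to get $\binom{n}{k}\big(1+\tfrac{2k}{n-k+1}\big)=\tfrac{n+k+1}{\,n-k+1\,}\binom{n}{k}$, which completes the first line.

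For the negative index I would repeat the computation with the inverse binomial transform $B^{-n}b_k=\sum_{i=0}^n(-1)^i\binom{n+i-1}{i}b_{k-i}$ (equivalently, extract $[x^k]$ from $(1+x)^{-n}(1+2x)$ via the binomial series). Only $i=k$ and $i=k-1$ contribute, giving $(-1)^k\binom{n+k-1}{k}+2(-1)^{k-1}\binom{n+k-2}{k-1}$; factoring out $(-1)^k$ produces the bracketed form $(-1)^k\big[\binom{n+k-1}{k}-2\binom{n+k-2}{k-1}\big]$. The closed form then follows from $\binom{n+k-2}{k-1}=\tfrac{k}{\,n+k-1\,}\binom{n+k-1}{k}$, since $1-\tfrac{2k}{n+k-1}=\tfrac{n-k-1}{\,n+k-1\,}$.

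The computation is routine, so the only real care is the sign bookkeeping in the negative case: the value $b_1=2$ enters with weight $(-1)^{k-1}$ while $b_0=1$ enters with weight $(-1)^k$, and one must factor $(-1)^k$ out correctly to match the stated sign. As an alternative that sidesteps this, I note that $B(1+2x)$ is exactly the trapezoid-interchange reflection of $B(1-x)$ (the case $r=s=1$ of $B(-sx+r)\leftrightarrow B((r+s)x+r)$), so the formulas could instead be deduced from Proposition 8.1 by applying the symmetry of Proposition 3.1; tracking that reflection's sign changes is itself fiddly, however, so I would favor the direct evaluation above as the cleaner self-contained argument.
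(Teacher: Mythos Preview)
Your argument is correct and is exactly the direct computation the paper has in mind; the proposition is stated in the paper without proof as an immediate consequence of the binomial transform formulas, and your evaluation of $[x^k]\big((1+x)^{\pm n}(1+2x)\big)$ together with the absorption identities carries this out cleanly. Your remark on the alternative via the trapezoid-interchange symmetry with $B(1-x)$ is also apt, though, as you note, the direct route is tidier.
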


Since the initial condition is not skew-palindromic, we use Proposition 8.3 to see that the only proper zeros in this array are at points $(t, -t-1)$ for $t\ge 1.$ To the left of these zeros, we obtain
\begin{equation*}B_{t, -t-2}=(-1)^t \frac{1}{2t+1}\begin{pmatrix}2t+1\\ t\end{pmatrix}=(-1)^t C_{t}\end{equation*}

Now we apply Theorem 7.3 to extend Shapiro's formulas for Catalan triangle numbers in Corollary 8.4 and 8.5.  The skew-palindromic property with truncation yields the dot product of rows in Shapiro's Catalan triangle.

\begin{corollary}  For $n\ge 0,$ the sum of squares the values of the  $n$-th column of $B(1-x)$ is $2C_n$. That is, 
\begin{equation} C_n =\sum\limits_{i=0}^{\lfloor\frac{n}2\rfloor} a_{i, n}^2,\end{equation}
and, for all $l$ in $\mathbb{Z},$
\begin{equation} C_n= -\frac{1}{2}\sum\limits_{i=0}^{n+1} a_{i, n-l}a_{n+1-i, n+l}.
\end{equation}
\end{corollary}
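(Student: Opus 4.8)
The plan is to establish the second (convolution) identity first, since it is the genuine content of Theorem 5.5, and then to deduce the sum-of-squares formula from it by exploiting the skew-palindromic symmetry of the columns of $B(1-x)$.

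Here the sequence $a$ is the coefficient sequence of $1-x$, so $a_{k,n}=B^n a_k$ and the quantity in the second identity is the convolution $\left(B^{n-l}a * B^{n+l}a\right)_{n+1}$. By Corollary 5.7 (with $k=2$, $n_1=n-l$, $n_2=n+l$),
\[
B^{n-l}a * B^{n+l}a = B^{2n}(a*a),
\]
which is manifestly independent of $l$; this alone accounts for the $l$-invariance asserted in the statement. Since $a*a$ is the coefficient sequence of $(1-x)^2 = 1-2x+x^2$, the transform $B^{2n}(a*a)$ is column $2n$ of $B((1-x)^2)$, whose entry at index $n+1$ is
\[
\binom{2n}{n+1} - 2\binom{2n}{n} + \binom{2n}{n-1}.
\]

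Next I would evaluate this entry. Using the symmetry $\binom{2n}{n+1}=\binom{2n}{n-1}$ together with $C_n = \binom{2n}{n}-\binom{2n}{n-1}$, it collapses to $2\left(\binom{2n}{n-1}-\binom{2n}{n}\right) = -2C_n$, which is exactly the second identity. Specializing to $l=0$ then yields $\sum_{i=0}^{n+1} a_{i,n}\,a_{n+1-i,n} = -2C_n$. To pass to the first identity I would invoke that $1-x$ is skew-palindromic, so each column $n$ is a skew-palindromic polynomial of degree $n+1$, giving $a_{n+1-i,n} = -a_{i,n}$. Substituting this into the $l=0$ convolution turns it into $-\sum_{i=0}^{n+1} a_{i,n}^2$, whence $\sum_{i=0}^{n+1} a_{i,n}^2 = 2C_n$. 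Finally, the same relation gives $a_{i,n}^2 = a_{n+1-i,n}^2$, so the full column sum folds in half: when $n$ is odd the central term $a_{(n+1)/2,\,n}$ is forced to vanish (being its own negative), and in either parity one is left with $2\sum_{i=0}^{\lfloor n/2\rfloor} a_{i,n}^2 = 2C_n$, which is the claimed formula.

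The computations are all routine; the conceptual heart is simply that the self-convolution of a column reduces, via Theorem 5.5, to reading a single entry off $B((1-x)^2)$. The one place demanding care is the folding step, where the parity of $n$ determines whether there is a vanishing central term, so the upper limit $\lfloor n/2\rfloor$ must be checked separately in the even and odd cases.
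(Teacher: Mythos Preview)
Your proof is correct and follows essentially the same route as the paper: the paper cites Theorem~7.3 (the general self-convolution formula for $B(sx+r)$) with $r=1$, $s=-1$ to obtain $\sum_i a_{i,n}a_{n+1-i,n}=-2C_n$, invokes Theorem~5.5/Corollary~5.6 for the $l$-invariance, and then uses the skew-palindromic reduction of Proposition~7.4 to fold the convolution into the half-sum of squares. Your direct evaluation of $[x^{n+1}]\big((1+x)^{2n}(1-x)^2\big)$ is exactly the specialization of Theorem~7.3 carried out by hand, and your folding argument is Proposition~7.4 applied to the column polynomial $(1+x)^n(1-x)$.
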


\begin{corollary}  For $n\ge 0,$ the convolution of the  $n$-th and $(n+1)$-st columns of $B(1-x)$ is $-C_{n+1}$. That is, 
\begin{equation} C_{n+1} =-\sum\limits_{i=0}^{n+1} a_{i, n}a_{n+1-i, n+1}\end{equation}
and, for all $l$ in $\mathbb{Z},$
\begin{equation} 
C_{n+1} =-\sum\limits_{i=0}^{n+1} a_{i, n-l}a_{n+1-i, n+l+1}.
\end{equation}
\end{corollary}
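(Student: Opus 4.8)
The plan is to recognize both displayed sums as a single Cauchy-product coefficient and to evaluate it via Theorem 5.5. Write $a$ for the sequence with generating function $1-x$, so that the columns of $B(1-x)$ are the sequences $B^na$ with entries $a_{k,n}=(B^na)_k$; note that $a$ is skew-palindromic of degree $1$, with explicit entries given by Proposition 8.1. Observe first that (8.17) is exactly the $l=0$ instance of (8.18). In the notation of Section 5, the right-hand sum in (8.18) is the $(n+1)$-st coefficient of the Cauchy product $B^{n-l}a * B^{n+l+1}a$, so proving the corollary amounts to evaluating this one coefficient and showing it is independent of $l$.

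I would dispose of the $l$-dependence first. By Theorem 5.5 (equivalently Corollary 5.6), a convolution is unchanged when the two exponents are shifted in opposite directions by the same amount: since $t^{n-l}*t^{n+l+1}=t^{2n+1}=t^n*t^{n+1}$, the associativity and commutativity of Proposition 5.2 give
\[ B^{n-l}a * B^{n+l+1}a = t^{2n+1}*(a*a) = B^{2n+1}(a*a) = B^na * B^{n+1}a \]
for every $l\in\mathbb{Z}$. Thus the entire product sequence, and in particular its $(n+1)$-st coefficient, does not depend on $l$, and (8.18) follows from (8.17) once the latter is established.

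It remains to compute a single coefficient. Since $a*a$ has generating function $(1-x)^2=1-2x+x^2$, Definition 5.4 yields
\[ (B^{2n+1}(a*a))_{n+1} = \begin{pmatrix}2n+1\\n+1\end{pmatrix} - 2\begin{pmatrix}2n+1\\n\end{pmatrix} + \begin{pmatrix}2n+1\\n-1\end{pmatrix}. \]
Using $\binom{2n+1}{n+1}=\binom{2n+1}{n}$ together with the ratio $\binom{2n+1}{n-1}=\tfrac{n}{n+2}\binom{2n+1}{n}$, this collapses to $-\tfrac{2}{n+2}\binom{2n+1}{n}=-\tfrac{1}{n+2}\binom{2n+2}{n+1}=-C_{n+1}$ by formula (8.14). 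This proves (8.17), and with the preceding paragraph it gives (8.18) for all $l$.

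The computation is routine, so the only real subtlety is bookkeeping, and this is where I would be most careful. One must confirm that the bounds $0\le i\le n+1$ capture every nonzero term: column $n$ has top entry at row $n+1$ and column $n+1$ at row $n+2$, forcing $0\le i\le n+1$. If one prefers to invoke Theorem 7.3 rather than compute directly, a minor reconciliation is needed, since its adjacent-column formula is indexed at $n+2$ whereas the corollary uses index $n+1$. These two coefficients agree because the product column $(1-x)^2(1+x)^{2n+1}$ is palindromic of degree $2n+3$, so its coefficients at $n+1$ and $n+2$ coincide; both equal the value $\tfrac12\big[(r+s)^2n+4rs+2s^2\big]C_{n+1}=-C_{n+1}$ delivered by Theorem 7.3 at $r=1$, $s=-1$.
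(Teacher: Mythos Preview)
Your proof is correct and essentially follows the paper's route. The paper derives this corollary from Theorem~7.3 with $r=1$, $s=-1$ (together with Corollary~5.6 for the $l$-independence); your direct coefficient computation of $[x^{n+1}]\big((1+x)^{2n+1}(1-x)^2\big)$ is exactly the computation underlying Theorem~7.3 specialized to this case, and your palindromicity remark correctly reconciles the paper's index $n+2$ in Theorem~7.3 with the index $n+1$ in the corollary. One wording slip: where you say ``column $n$ has top entry at row $n+1$'' you mean its \emph{last nonzero} entry is at row $n+1$.
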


\begin{figure}
\includegraphics[scale=.6]{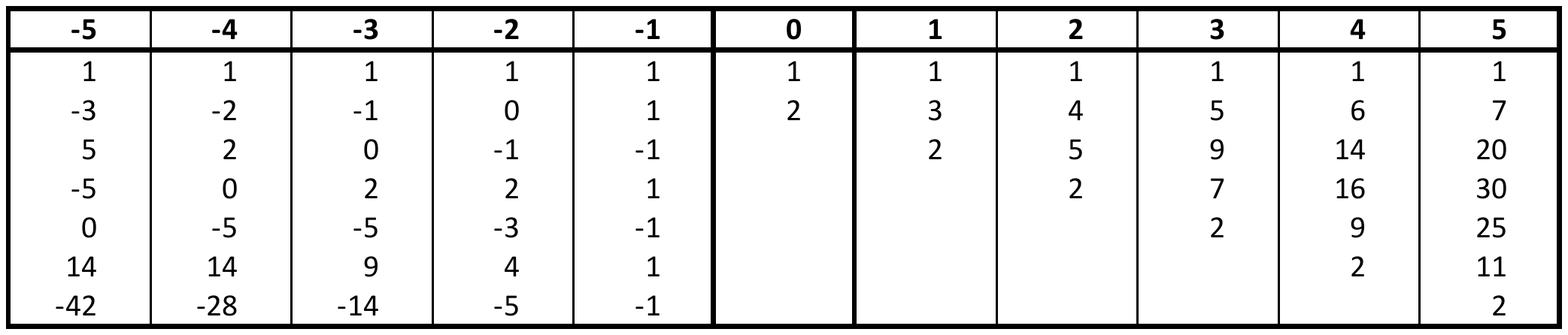}
\caption{Catalan number array $B(1+2x)$:  $C_{-n-1}$ on the left, below zeros}
\label{}
\end{figure}

Likewise, for $B(1+2x)$,

\begin{corollary}  For $n\ge 0,$ the sum of squares the values of the  $n$-th column of $B(1+2x)$ is $(9n+4)C_n$. That is, 
\begin{equation} (9n+4)C_{n} =\sum\limits_{i=0}^{n+1} b_{i, n}^2,\end{equation}
and, for all $l$ in $\mathbb{Z},$
\begin{equation} (9n+4)C_{n}= \sum\limits_{i=0}^{n+1} b_{i, n-l}b_{n+1-i, n+l}.
\end{equation}
\end{corollary}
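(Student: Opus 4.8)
The plan is to read this corollary as the $B(1+2x)$ specialization of the column-convolution machinery, in direct parallel with Corollaries 8.4 and 8.5 for $B(1-x)$. Here $1+2x=sx+r$ with $r=1$ and $s=2$, so $(r+s)^2=9$ and $2rs=4$. For the second identity I would first take $l=0$ and apply Theorem 7.3 verbatim: the convolution of the $n$-th column with itself equals $[(r+s)^2n+2rs]\,C_n=(9n+4)C_n$. To reach arbitrary $l$ I would invoke the Generalized Vandermonde Identity (Corollary 5.6): shifting the two argument columns in opposite directions, to $n-l$ and $n+l$, leaves the convolution unchanged, since $B^{l}b*B^{-l}b=b*b$ by Theorem 5.5. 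This yields $\sum_{i=0}^{n+1}b_{i,n-l}b_{n+1-i,n+l}=(9n+4)C_n$ for every $l\in\mathbb{Z}$.

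For the first identity I would work from the closed form $b_{k,n}=\binom{n}{k}+2\binom{n}{k-1}$ of Proposition 8.3. The point is that $\sum_{k}b_{k,n}^2$ is a sum of squares rather than a single convolution coefficient; to package it as one I would pass to the reversal. Writing $q(x)=(1+x)^n(1+2x)$ for the generating polynomial of the $n$-th column and $q^*(x)=x^{n+1}q(1/x)=(1+x)^n(2+x)$ for its reverse, one has
\[
\sum_{k=0}^{n+1}b_{k,n}^2=[x^{n+1}]\big(q(x)\,q^*(x)\big)=[x^{n+1}]\big((1+x)^{2n}(2+5x+2x^2)\big),
\]
and the three surviving coefficients $2\binom{2n}{n+1}$, $5\binom{2n}{n}$, $2\binom{2n}{n-1}$ may be read off by the Chu--Vandermonde identity (Proposition 7.1) and collapsed through $\binom{2n}{n\pm1}=\tfrac{n}{n+1}\binom{2n}{n}$ into a single multiple of $C_n$.

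The step I expect to be the main obstacle is reconciling the two identities. In Corollary 8.4 the array $B(1-x)$ is skew-palindromic, so $a_{k,n}=-a_{n+1-k,n}$ forces $a_{k,n}^2=-a_{k,n}a_{n+1-k,n}$ \emph{termwise}, tying the sum of squares to the self-convolution up to sign. For $B(1+2x)$ no such symmetry is available, since $1+2x$ is neither palindromic nor skew-palindromic, so the two sums must be evaluated independently. Carrying out the reversal above, the middle coefficient of $(1+2x)(2+x)=2+5x+2x^2$ is $5$ rather than the value a symmetric self-pairing would contribute, and the sum of squares comes out to $(9n+5)C_n$, exceeding the self-convolution value $(9n+4)C_n$ by exactly $C_n$. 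Determining the correct constant in the sum-of-squares identity, and tracing the discrepancy to this asymmetric cross term, is therefore the delicate part of the argument.
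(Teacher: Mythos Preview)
Your handling of the second displayed identity is exactly what the paper intends: Theorem~7.3 with $r=1$, $s=2$ gives the $l=0$ case, and Corollary~5.6 (equivalently Theorem~5.5) propagates it to all $l\in\mathbb{Z}$. The paper offers no separate proof for this corollary beyond its placement after Theorem~7.3, so there is nothing more to match.

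On the first displayed identity you have not failed to find the argument---you have found that the statement is wrong. Your reversal computation is correct: with $q(x)=(1+x)^n(1+2x)$ and $q^*(x)=(1+x)^n(2+x)$ one has
\[
\sum_{i=0}^{n+1} b_{i,n}^2=[x^{n+1}]\bigl((1+x)^{2n}(2+5x+2x^2)\bigr)
=2\binom{2n}{n+1}+5\binom{2n}{n}+2\binom{2n}{n-1}=(9n+5)\,C_n,
\]
and one checks this directly at $n=0$ (column $1,2$, squares summing to $5$) and $n=1$ (column $1,3,2$, squares summing to $14$), whereas $(9n+4)C_n$ gives $4$ and $13$. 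The discrepancy of $C_n$ that you isolated is real and comes precisely from the middle coefficient of $(1+2x)(2+x)$ being $5$ rather than the $4$ appearing in $(1+2x)^2$. In Corollary~8.4 the skew-palindromic relation $a_{n+1-i,n}=-a_{i,n}$ forces the self-convolution and the sum of squares to agree up to sign termwise; for $B(1+2x)$ there is no such relation, and the paper's text ``sum of squares'' together with the summand $b_{i,n}^2$ is a slip---the intended first line is simply the $l=0$ instance of the second, namely $\sum_{i=0}^{n+1} b_{i,n}\,b_{n+1-i,n}=(9n+4)C_n$. So your instinct that this was the obstacle is right, but the resolution is a correction to the statement rather than a missing idea in your proof.
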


\begin{corollary}  For $n\ge 0,$ the convolution of the  $n$-th and $(n+1)$-st columns of $B(1+2x)$ is $\frac12 (9n+16)C_{n+1}$. That is, 
\begin{equation} (9n+16)C_{n+1} =2\sum\limits_{i=0}^{n+1} b_{i, n}b_{n+1-i, n+1}\end{equation}
and, for all $l$ in $\mathbb{Z},$
\begin{equation} 
(9n+16)C_{n+1} =2\sum\limits_{i=0}^{n+1} b_{i, n-l}b_{n+1-i, n+l+1}.
\end{equation}
\end{corollary}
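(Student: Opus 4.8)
The plan is to derive the identity as a specialization of the adjacent-column formula in Theorem 7.3, combined with the shift-invariance of Theorem 5.5 (equivalently Corollary 5.7). The array $B(1+2x)$ is precisely $B(sx+r)$ with $r=1$ and $s=2$, so I would substitute these values into the second equation of Theorem 7.3. The bracketed constant becomes $(r+s)^2 n + 4rs + 2s^2 = 9n + 8 + 8 = 9n+16$, and multiplying the resulting $\tfrac12(9n+16)C_{n+1}$ by $2$ gives the first displayed equality at once.

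For the $l$-parametrized family, I would write $a$ for the sequence with generating function $1+2x$, so that the $n$-th column of $B(1+2x)$ is $B^na$. The convolution in the second equation is then $B^{n-l}a * B^{n+l+1}a$, and by Corollary 5.7 this equals $B^{(n-l)+(n+l+1)}(a*a) = B^{2n+1}(a*a)$, which is visibly independent of $l$; at the level of generating functions the two columns multiply to $(1+x)^{2n+1}(1+2x)^2$ no matter what $l$ is. Hence the relevant coefficient is the same for every $l$, equal to its value at $l=0$, and the second equality reduces to the first.

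Two points require care rather than ingenuity. First, Theorem 7.3 assumes $n>0$, so the boundary case $n=0$ must be checked directly: the product is $(1+x)(1+2x)^2$ and one verifies that $2$ times the governing coefficient equals $16 = (9\cdot 0 + 16)C_1$. Second, one must identify the correct coefficient of $(1+x)^{2n+1}(1+2x)^2$; expanding $(1+2x)^2 = 1 + 4x + 4x^2$ produces a sum $\binom{2n+1}{\,\cdot\,} + 4\binom{2n+1}{\,\cdot\,} + 4\binom{2n+1}{\,\cdot\,}$, which I would match to $\tfrac12(9n+16)C_{n+1}$ using $C_{n+1} = \tfrac{1}{n+2}\binom{2n+2}{n+1}$. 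I expect no serious obstacle, since the statement is a corollary of Theorem 7.3 by substitution; the only delicate step is that, unlike the skew-palindromic array $B(1-x)$ of Corollaries 8.4 and 8.5, the product $(1+x)^{2n+1}(1+2x)^2$ carries no palindromic symmetry, so its two central coefficients differ and one must track the correct one rather than rely on symmetry.
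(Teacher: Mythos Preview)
Your approach is exactly the paper's: Corollaries 8.6 and 8.7 are presented immediately after the line ``Now we apply Theorem 7.3\dots'', and the intended proof is precisely the substitution $r=1$, $s=2$ into the second formula of Theorem 7.3 together with the shift-invariance of Theorem 5.5/Corollary 5.7. Your caution about tracking the correct central coefficient is well placed---indeed it catches a slip: Theorem 7.3 produces $\sum_{i=0}^{n+2} b_{i,n}\,b_{n+2-i,n+1}$ (the coefficient of $x^{n+2}$ in $(1+x)^{2n+1}(1+2x)^2$, which for $n=0$ gives $8$ and hence $2\cdot 8=16$), whereas the displayed sum with upper limit $n+1$ appears to be copied from the palindromic Corollary 8.5, where the two central coefficients coincide.
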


On the other hand, we may directly match the initial condition to Catalan numbers adjacent to zeros in three ways, as noted below:
\begin{align*}
C_{2n-1} &= -\sum\limits_{i=0}^{2n} b_{i, -n} b_{2n-i, -n} = -\frac12\sum\limits_{i=0}^{2n} b_{i, -n-1} b_{2n-i, -n}\\ &= \sum\limits_{i=0}^{2n-1} b_{i, -n-1} b_{2n-i-1, -n}.\\
\end{align*}
\begin{align*}
C_{2n} &= \sum\limits_{i=0}^{2n+1} b_{i, -n} b_{2n-i+1, -n-1} = \frac12\sum\limits_{i=0}^{2n+1} b_{i, -n-1} b_{2n-i+1, -n-1}\\ &= -\sum\limits_{i=0}^{2n} b_{i, -n-1} b_{2n-i, -n-1}.\\
\end{align*}
Now we consider the near zero sequence of the binomial array associated to $M(m, m, k)$ with $k$ odd and $0<k<m$.  

\begin{theorem}[Figure 7] Suppose $1<k<m$ with $k=2k'+1.$  Define $C_t(m, k)$ to be the entry to the right of  zero in column $2t-1$ on the diagonal of the extended Clebsch-Gordan hexagon $M(m, m, k)$.  For $t\ge 1,$
\begin{equation*}C_t(m, k)=(m-k')\begin{pmatrix} m-k'-1\\ k'\end{pmatrix}\begin{pmatrix} t+2k'-m\\ k'\end{pmatrix}\begin{pmatrix} t+k'+1\\ k'\end{pmatrix}^{-1}C_t,\end{equation*}
where $C_t$ is the $t$-th Catalan number. When $k=1,$ $C_t(m, 1)=mC_t.$
\end{theorem}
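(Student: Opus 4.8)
The plan is to isolate the single near-zero entry, rewrite it as a coefficient in a product of power series, factor out the Catalan structure, and close with a hypergeometric evaluation.

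First I would pin down which entry $C_t(m,k)$ is. The given initial condition $a_i=(-1)^i\binom{m-i}{k-i}\binom{m-k+i}{i}$ (for $0\le i\le k$, and $0$ otherwise) generates a polynomial $p(x)$ that is skew-palindromic of \emph{odd} degree $k=2k'+1$. Applying the skew-palindromic diagonal-of-zeros result (Proposition 6.5) to $p$, whose degree plays the role of ``$m$'' there and hence ``$l$''$=k'$, places a zero in column $2t-1$ at row $k'+t$. The entry immediately to its right is thus $C_t(m,k)=a_{k'+t,\,2t}$, and since the vanishing entry is the middle summand in Pascal's recurrence, $a_{k'+t,\,2t}=a_{k'+t-1,\,2t-1}$.

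Next, using the generating-function description of the binomial transform from Section 2, I would write the entry as a coefficient extraction:
\[
C_t(m,k)=a_{k'+t,\,2t}=[x^{k'+t}]\big((1+x)^{2t}p(x)\big)=\sum_{i}(-1)^i\binom{2t}{k'+t-i}\binom{m-i}{k-i}\binom{m-k+i}{i}.
\]
The key reduction comes from the skew-palindromy: odd degree forces $p(1)=0$, so $(1-x)\mid p(x)$, and writing $p(x)=(1-x)q(x)$ the cofactor $q$ is \emph{palindromic} of degree $2k'$. Substituting $(1+x)^{2t}p(x)=(1-x)(1+x)^{2t}q(x)$ recasts the entry as a $q$-weighted sum of Catalan-triangle (ballot) numbers,
\[
C_t(m,k)=\sum_{i=0}^{2k'}q_i\Big(\binom{2t}{k'+t-i}-\binom{2t}{k'+t-i-1}\Big).
\]
The central term $i=k'$ contributes exactly $q_{k'}\,C_t$, while the palindromy $q_i=q_{2k'-i}$ pairs the remaining terms symmetrically about the center; this is the step that exposes the global factor $C_t$, since each ballot number is $C_t$ times a rational function of $t$.

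It remains to evaluate the sum in closed form, and this is where I expect the real work to lie. In hypergeometric form the original single sum is $\binom{2t}{t+k'}\binom{m}{k}$ times a terminating ${}_3F_2(1)$ with upper parameters $-(t+k'),\,-k,\,m-k+1$ and lower parameters $t-k'+1,\,-m$. I would finish by one of two routes. Route (i): transform (e.g.\ reverse the terminating series) into balanced form and invoke a classical summation theorem, collapsing the series to $(m-k')\binom{m-k'-1}{k'}\binom{t+2k'-m}{k'}\binom{t+k'+1}{k'}^{-1}$. Route (ii): establish by creative telescoping the first-order recurrence $\tfrac{C_{t+1}(m,k)}{C_t(m,k)}=\tfrac{2(2t+1)(t+2k'-m+1)}{(t+k'-m+1)(t+k'+2)}$, check it against the $t$-ratio of the claimed product multiplied by $C_{t+1}/C_t=\tfrac{2(2t+1)}{t+2}$, and induct upward from the base case $k=1$, where $k'=0$, $p(x)=m(1-x)$, and the entry is $m\,C_t$ (recovering the stated specialization). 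The main obstacle is precisely this evaluation: the ${}_3F_2(1)$ is neither manifestly balanced nor well-poised, so the crux is identifying the correct classical identity after a transformation, or verifying that the Zeilberger-certified recurrence matches the claimed ratio. Once that single hypergeometric fact is secured, the product formula and the $k=1$ case follow at once.
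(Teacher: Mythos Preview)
Your route (ii) is essentially the paper's approach: derive the first-order recurrence in $t$ and match it to the ratio of the claimed closed form. The paper, however, does not use creative telescoping at all. It reads the recurrence directly off the local $4\times 4$ subarray surrounding two consecutive diagonal zeros: the skew-palindromic symmetry forces the pattern
\[
\begin{bmatrix}
C_t & * & * & * \\
0 & C_t & C_{t+1} & * \\
-C_t & -C_t & 0 & C_{t+1} \\
* & * & -C_{t+1} & -C_{t+1}
\end{bmatrix},
\]
and applying the explicit ``reverse'' Pascal recurrence from \cite{D2} at the boxed positions $(i,j)=(k'+t,\,k'+t+1)$ immediately yields
\[
C_{t+1}(m,k)=\frac{2(2t+1)(m-t-k)}{(t+k'+2)(m-t-k'-1)}\,C_t(m,k),
\]
which is your ratio after the substitution $k=2k'+1$. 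So the recurrence is free from the array geometry; no Zeilberger certificate is needed. The paper also makes explicit why one may treat entries as polynomials in $m$ (so the recurrence valid inside some $M(m,m,k)$ extends to the whole array), a point your sketch omits.

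One genuine slip: your ``base case $k=1$'' does not anchor the induction. The theorem fixes $k>1$ and varies $t$, so the base case is $t=1$, i.e.\ the value $C_1(m,k)$. The paper computes this by summing the two central initial entries $a_{k'-1}+a_{k'}$ to get
\[
C_1(m,k)=(-1)^{k'}\frac{2}{k'+2}\binom{m-k'-2}{k'}\binom{m-k'}{k'+1},
\]
which one then checks against the claimed product at $t=1$. The $k=1$ specialization $C_t(m,1)=mC_t$ is a separate remark, not an inductive seed.

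Your route (i) --- factoring $p=(1-x)q$ and evaluating the resulting ${}_3F_2(1)$ --- is a genuinely different and heavier path that the paper avoids entirely; the array-based recurrence makes it unnecessary.
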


\begin{figure}
\includegraphics[scale=.75]{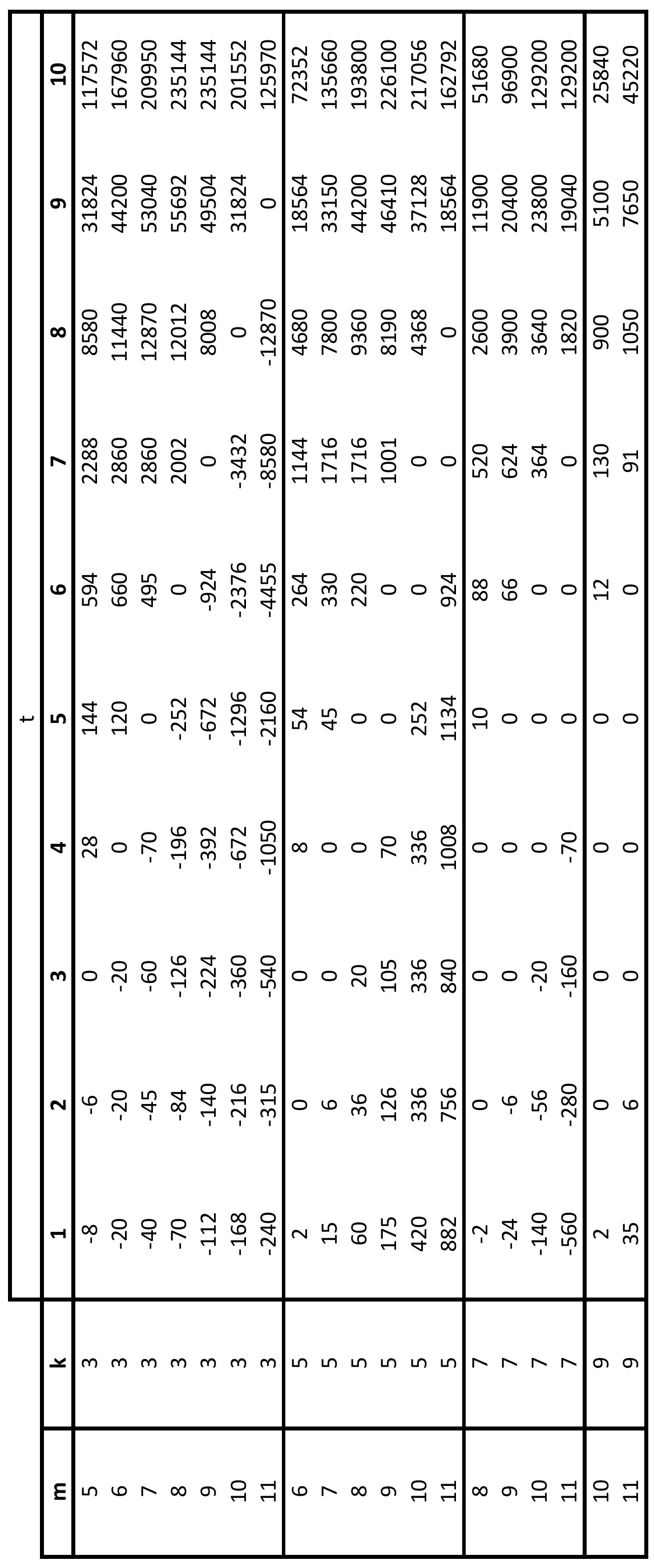}
\caption{Near-zero sequences $C_t(m, k)$ with $1<k<m$ and $k$ odd}
\label{}
\end{figure}

\begin{proof}
For an elementary proof, we follow the methods of Proposition 4.2 of \cite{D2}, but two potential issues require attention: use of the reverse recurrence inside $M(m, m, k)$, and failure of recursion if we encounter the triangle of zeros on the right-hand side of the $M(m, m, k).$  To circumvent these issues, we proceed as follows.  

First fix odd $k>1;$  if $k=1,$ then $M(m, m, 1)$ is a subarray of $B(m-mx).$  The initial condition is a column of length $k+1,$ so the shape of the array is fixed as $m$ varies.  Writing binomial coefficients as Pochhammer symbols, we may consider the entries of the initial condition as polynomials in $m$ of degree $k.$ Since Pascal's recurrence is linear, all proper values of the array may be considered as polynomials in $m$ of degree less than or equal to $k.$ Since the proper entries of $M(m, m, k)$ are nested by shape in $B(a_i)$ as $m$ increases, the polynomial for a fixed entry in some $M(m, m, k)$ is fixed in the entire array.  That is, we may restrict our calculations within some $M(m, m, k).$

Now the proof follows by calculating the  recurrence relation and the initial condition.  By the position of zeros and the skew-palindromic property, we have the following subarray relating $C_t(m, k)$ to $C_{t+1}(m, k):$

\begin{equation*}  
\begin{bmatrix*}[r]
  C_t  & * & * & *  \\
  0 & \fbox{$C_t$}  & \fbox{$C_{t+1}$}   &  * \\
  -C_t & -C_t &  \fbox{$0$} & C_{t+1}    \\
 * & * & -C_{t+1} & -C_{t+1} \\
 \end{bmatrix*}.
\end{equation*}
\vspace{10pt}

Using the reverse recurrence on the boxed entries with $i=k'+t$ and $j=k'+t+1,$ we obtain
\begin{equation}C_{t+1}(m, k)=\frac{2(2t+1) (m-t-k)}{(t+k'+2)(m-t-k'-1)} C_{t}(m, k).\end{equation}

For the initial condition $C_1$, we sum the initial values $a_{k'-1}$ and $a_{k'}$ 
\begin{equation*}(-1)^{k'-1}\begin{pmatrix}m-k'+1\\ k'+2\end{pmatrix} \begin{pmatrix}m-k'-2\\ k'-1\end{pmatrix}+(-1)^{k'}\begin{pmatrix}m-k'\\ k'+1\end{pmatrix}\begin{pmatrix}m-k'-1\\ k'\end{pmatrix}\end{equation*}
to get
\begin{equation}C_1(m, k)=(-1)^{k'}\frac{2}{k'+2}\begin{pmatrix}m-k'-2\\ k'\end{pmatrix} \begin{pmatrix}m-k'\\ k'+1\end{pmatrix}.\end{equation}
\end{proof}

Note that  $C_t(m, k)$ is a polynomial in $m$ of degree less than or equal to $k$, and that $C_t(m, k)=0$ for $m-2k'\le t \le m-k'-1$.  Otherwise, the $t$-th zero on the diagonal of $M(m, m, k)$ occurs at coordinates $(k'+t, 2t-1)$ in $B(a_i).$ So $C_t(m, k)$ is the value at coordinates $(k'+t, 2t),$ which in the notation of \cite{D2} is given by 
\begin{align}c_{m, m, k}(k'+t, k'+t+1)&=\sum\limits_{l=0}^k (-1)^l\begin{pmatrix} 2t \\ k'+t-l\end{pmatrix} \begin{pmatrix}m-l\\ k-l\end{pmatrix}\begin{pmatrix} m-k+l\\ l\end{pmatrix}.\end{align}

Now we consider the near zero sequence of a general skew-palindromic binomial array $B(d_i)$ with degree $m=2m'+1$;  if $m$ is even, we may replace the initial condition $d_i$ with $B^1d_i.$ We decompose $B(d_i)$ into a sum of skew-palindromic binomial arrays of the form $B(1-x^{2i+1}).$
\begin{lemma}[Figure 8]
Fix $r\ge 0.$ The diagonal of entries of $B(-x+1)$ at coordinates $(l, r+2l)$ are given by the sequence 
\begin{equation*}a_{l, r+2l}=\frac{r+1}{r+l+1}\begin{pmatrix} r+2l\\ l\end{pmatrix}\end{equation*}
\end{lemma}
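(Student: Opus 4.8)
The array $B(-x+1)$ is nothing other than the array $B(1-x)$ whose entries $a_{k,n}$ were computed in Proposition 8.1: its initial condition is $p(x)=1-x$, so $a_0=1$, $a_1=-1$, and $a_i=0$ for $i\ge 2$. The plan is therefore to read off the diagonal $(l,r+2l)$ directly from the closed form already established for these entries, rather than to re-derive anything from the recurrence.

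The key step is to substitute $k=l$ and $n=r+2l$ into the closed form (8.2) of Proposition 8.1, namely
\begin{equation*}a_{k,n}=\frac{n-2k+1}{n-k+1}\begin{pmatrix}n\\k\end{pmatrix}\qquad (n>0,\ 0<k\le n).\end{equation*}
With these values the numerator collapses to $n-2k+1=(r+2l)-2l+1=r+1$ and the denominator to $n-k+1=(r+2l)-l+1=r+l+1$, giving
\begin{equation*}a_{l,r+2l}=\frac{r+1}{r+l+1}\begin{pmatrix}r+2l\\l\end{pmatrix},\end{equation*}
exactly as claimed. Before invoking (8.2) I would first confirm that its hypotheses hold along this diagonal: since $r\ge 0$ and $l\ge 1$ we have $n=r+2l>0$ and $0<l\le r+2l=n$, so the formula applies at every interior point.

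The only remaining bookkeeping is the endpoint $l=0$, which lies on the top line and is not covered by (8.2). There the defining property that every top-line entry equals $a_0$ gives $a_{0,r}=1$, while the proposed formula returns $\tfrac{r+1}{r+1}\binom{r}{0}=1$, so the two agree. This boundary case is the only point requiring attention; there is no genuine obstacle, since the substitution does all the work. One could alternatively verify the formula by induction on $l$ using Pascal's recurrence $a_{l,r+2l+1}=a_{l-1,r+2l}+a_{l,r+2l}$ together with the standard ratio identity for the two adjacent binomial coefficients, but this is unnecessary once (8.2) is in hand.
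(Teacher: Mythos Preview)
Your proof is correct and follows exactly the paper's approach: the paper simply states that the lemma ``follows immediately from substitution into the formula for $a_{k,n}$ above,'' i.e., into (8.2). Your write-up is actually more careful than the paper's, since you verify the hypotheses of (8.2) along the diagonal and separately check the boundary case $l=0$.
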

\begin{proof}
This follows immediately from substitution into the formula for $a_{k, n}$ above.
\end{proof}

\noindent {\it Remark.}  The $r$-th sequence is the $(r+1)$-th convolution power of $C_t.$

\begin{proposition}  (a)  For $t\ge 1,$ let $c_{2i+1}(t)$ be the value to the right of the zero in column $2t-1$ on the diagonal of $B(1-x^{2i+1}).$  
Then $c_1(t)=C_t$, and, for $i\ge 1,$ $c_{2i+1}(t)$ is given by the values of Lemma 8.9 with $r=2i$ and $l=t-i$.  That is, for $i \ge 1,$
\begin{equation}c_{2i+1}(t)=\frac{2i+1}{t+i+1}\begin{pmatrix}2t\\ t-i\end{pmatrix}.\end{equation}
This sequence is  the $2i$-th row  of Figure 8, preceded by $i-1$ zeros.

(b)   For $t\ge 0,$ let $c_{2i}(t)$ be the value to the right of the zero in column $2t$ on the diagonal of $B(1-x^{2i}).$  
Then $c_2(t)=C_{t+1}$, and, for $i\ge 1,$ $c_{2i}(t)$ is given by the values of Lemma 8.9 with $r=2i-1$ and $l=t-i+1$.  That is, for $i \ge 1,$
\begin{equation}c_{2i}(t)=\frac{2i}{t+i+1}\begin{pmatrix}2t+1\\ t-i+1\end{pmatrix}.\end{equation}
This sequence is  the $(2i-1)$-th row  of Figure 8, preceded by $i-1$ zeros.
\end{proposition}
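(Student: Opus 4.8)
The plan is to reduce both parts to a single direct computation of the relevant near-zero entry, after which the closed form is simply read off from Lemma 8.9. The guiding observation is that the general entry of $B(1-x^r)$ is completely explicit: since the $n$-th column represents $(1+x)^n(1-x^r)$, the binomial transform (multiplication by $1+x$) gives
\begin{equation*}a_{k, n} = [x^k]\big((1+x)^n(1-x^r)\big) = \binom{n}{k} - \binom{n}{k-r}\end{equation*}
for all integers $k$ and $n$. Both formulas in the proposition will then follow by substituting the coordinates of the near-zero entry, applying the symmetry $\binom{n}{k} = \binom{n}{n-k}$ to the first term, and recognizing the result as a diagonal entry of $B(1-x)$ already evaluated in Lemma 8.9.

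First I would pin down the location of each zero and of its right-hand neighbor using the skew-palindromic structure. For part (a) the polynomial $1-x^{2i+1}$ is skew-palindromic of odd degree $2i+1$, so by Proposition 8.3 the diagonal of zeros sits at coordinates $(i+k+1, 2k+1)$; taking $2k+1 = 2t-1$ places the zero in column $2t-1$ at row $i+t$, and the value immediately to its right is $c_{2i+1}(t) = a_{i+t, 2t}$. For part (b) the polynomial $1-x^{2i}$ is skew-palindromic of even degree $2i$, so the zeros lie at $(i+k, 2k)$; taking $2k = 2t$ places the zero in column $2t$ at row $i+t$, with right neighbor $c_{2i}(t) = a_{i+t, 2t+1}$.

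Then I would carry out the computation. For part (a),
\begin{equation*}a_{i+t, 2t} = \binom{2t}{i+t} - \binom{2t}{i+t-(2i+1)} = \binom{2t}{t-i} - \binom{2t}{t-i-1},\end{equation*}
where the last equality uses $\binom{2t}{i+t} = \binom{2t}{t-i}$. This is precisely the entry of $B(1-x)$ on the diagonal $(l, r+2l)$ with $r=2i$ and $l=t-i$, so Lemma 8.9 yields $c_{2i+1}(t) = \frac{2i+1}{t+i+1}\binom{2t}{t-i}$. For part (b),
\begin{equation*}a_{i+t, 2t+1} = \binom{2t+1}{i+t} - \binom{2t+1}{t-i} = \binom{2t+1}{t-i+1} - \binom{2t+1}{t-i},\end{equation*}
using $\binom{2t+1}{i+t} = \binom{2t+1}{t-i+1}$; this matches the diagonal of $B(1-x)$ with $r=2i-1$ and $l=t-i+1$, and Lemma 8.9 gives $c_{2i}(t) = \frac{2i}{t+i+1}\binom{2t+1}{t-i+1}$. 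The base cases fall out by specialization: $i=0$ gives $c_1(t) = \frac{1}{t+1}\binom{2t}{t} = C_t$, while $i=1$ gives $c_2(t) = \frac{2}{t+2}\binom{2t+1}{t} = C_{t+1}$, the latter via $\binom{2t+2}{t+1} = 2\binom{2t+1}{t}$.

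The only genuine obstacle is the index bookkeeping: correctly reading off from Proposition 8.3 which zero lies in the prescribed column and at which row, and then ensuring that the symmetry applied to the first binomial coefficient aligns the two-term expression \emph{exactly} with the diagonal entry of $B(1-x)$ recorded in Lemma 8.9. Once the coordinates are fixed, every remaining step is a one-line binomial identity, so no machinery beyond Lemma 8.9 and the explicit column formula is needed.
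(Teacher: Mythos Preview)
Your argument is correct and matches the paper's own proof almost exactly: both locate the near-zero entry at $(i+t,2t)$ (resp.\ $(i+t,2t+1)$), write the general entry of $B(1-x^r)$ as $\binom{n}{k}-\binom{n}{k-r}$, and substitute. The paper then simplifies the resulting binomial difference directly to $\frac{(2t)!(2i+1)}{(t+i+1)!(t-i)!}$, while you apply the symmetry $\binom{2t}{i+t}=\binom{2t}{t-i}$ and invoke Lemma~8.9; since the statement itself says the value ``is given by the values of Lemma~8.9,'' your route is arguably the more natural one. One small correction: the skew-palindromic zero locations you cite come from Proposition~6.5, not Proposition~8.3 (the latter concerns $B(1+2x)$).
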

\begin{proof}
(a) Suppose the entries of $B(1-x^{2i+1})$  are denoted by $c_{k, n}$ and $i\ge 1.$ Then the entry to the right of the $t$-th zero has coordinates $(i+t, 2t)$. Recall that $B(x^{2i+1})=P(2i+1)$, the extended Pascal's triangle shifted down by $2i+1$ rows.  Then 
\begin{equation*}c_{k, n} = \begin{pmatrix} n\\ k \end{pmatrix}-\begin{pmatrix}n \\ k-2i-1 \end{pmatrix}.\end{equation*}
Thus 
\begin{equation*}c_{i+t, 2t} = \begin{pmatrix} 2t\\ i+t \end{pmatrix}-\begin{pmatrix} 2t \\ t-i-1 \end{pmatrix}
= \frac{(2t)!(2i+1)}{(t+i+1)!(t-i)!},\end{equation*}
and the lemma holds.

A similar proof holds for part (b).
\end{proof}

\begin{figure}
\includegraphics[scale=.65]{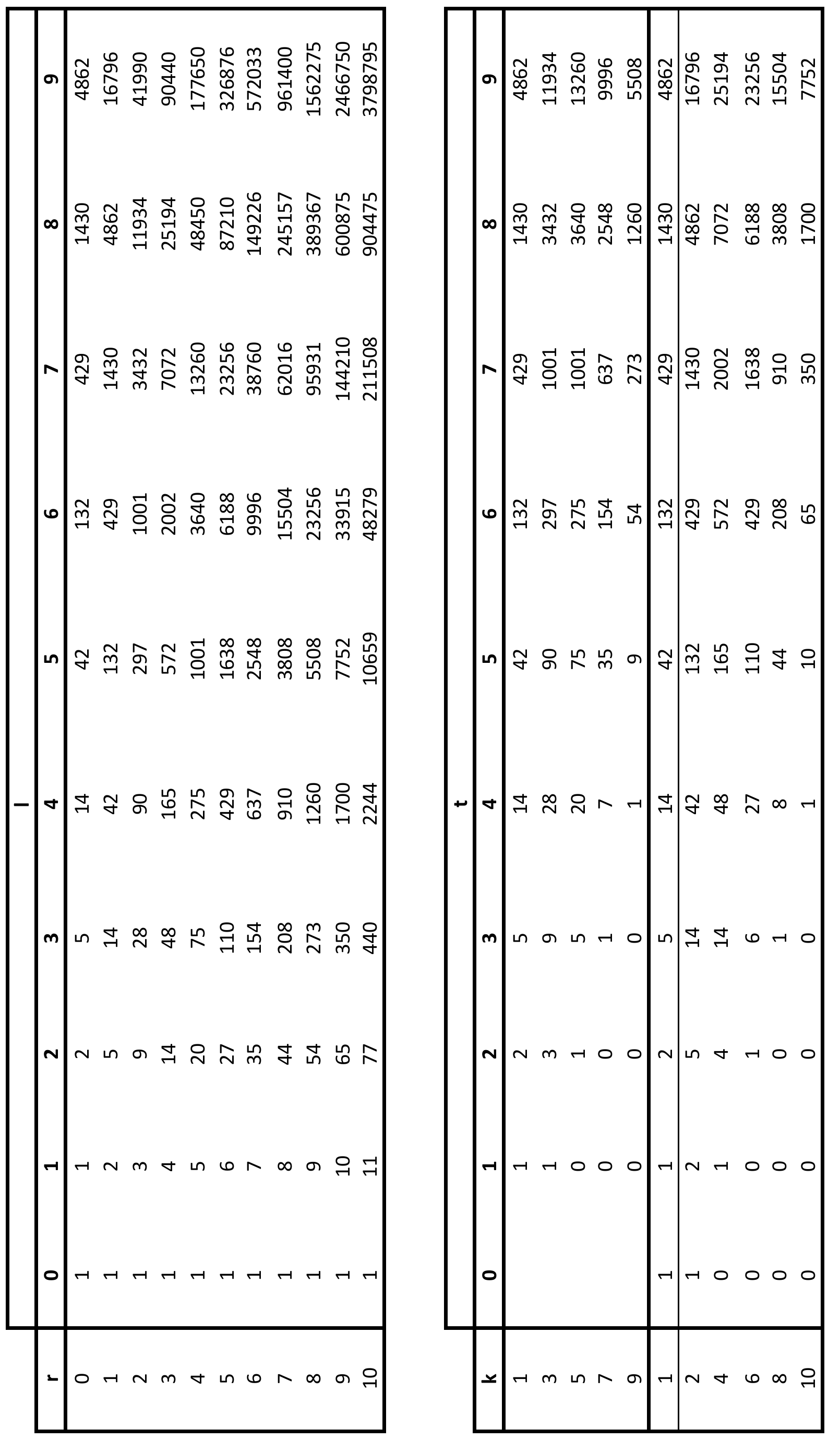}
\caption{$(1, 2)$-progressions $a_{l, r+2l}$ in the array $B(1-x)$(left);
sequences $c_k(t)$(right)}
\label{}
\end{figure}

 By linearity and the skew-palindromic property of  the first column of $B(d_i)$, we may express the corresponding binomial array as
\begin{equation*}\sum\limits_{i=0}^{m'} d_i B(x^{i}(1-x^{m-2i})).\end{equation*} 
The diagonal zeros occur at coordinates $(m'+t, 2t-1)$, so the $t$-th value of interest has coordinates $(m'+t, 2t)$.
Combining the initial condition with Proposition 8.10, we obtain
\begin{theorem}  Suppose  $p(x)$ is skew-palindromic of degree $m=2m'+1$ with corresponding sequence $d_i$, and $c_k(t)$ is defined as in Proposition 8.10. 
Then, for $t\ge 1,$  the near zero sequence $D_t$ of the binomial array $B(d_i)$ satisfies
\begin{equation}D_t=d_{m'+t, 2t}=\sum\limits_{i=0}^{m'}\ d_i\, c_{m-2i}(t),
\end{equation} 
\end{theorem}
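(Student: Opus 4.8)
The plan is to reduce the theorem to the single-term case already settled in Proposition 8.10 by decomposing the skew-palindromic initial condition into elementary pieces and then invoking linearity together with a vertical shift. First I would verify the decomposition $p(x)=\sum_{i=0}^{m'}d_i\,x^i(1-x^{m-2i})$ recorded just before the statement. Writing $p(x)=\sum_{i=0}^{m}d_ix^i$, splitting the sum at its midpoint, and applying the skew-palindromic relation $d_i=-d_{m-i}$ together with the substitution $j=m-i$ in the upper half collapses the two halves:
\begin{equation*}
p(x)=\sum_{i=0}^{m'}d_i\bigl(x^i-x^{m-i}\bigr)=\sum_{i=0}^{m'}d_i\,x^i\bigl(1-x^{m-2i}\bigr),
\end{equation*}
where each exponent $m-2i=2(m'-i)+1$ is odd and positive for $0\le i\le m'$. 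By Proposition 2.7, $B(d_i)=\sum_{i=0}^{m'}d_i\,B\bigl(x^i(1-x^{m-2i})\bigr)$, so the entry $d_{m'+t,2t}$ is the same linear combination of the corresponding entries of the component arrays.

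Next I would locate those entries by tracking the diagonal zeros through the vertical shift induced by the factor $x^i$. As noted in Section 2, multiplying the initial condition by $x^i$ shifts the whole array down by $i$ rows, so $B(x^i(1-x^{m-2i}))$ is $B(1-x^{m-2i})$ with every entry relocated $i$ rows lower. Since $1-x^{m-2i}$ is skew-palindromic of odd degree $2(m'-i)+1$, Proposition 8.10(a), read with its internal parameter equal to $m'-i$, places the zero in column $2t-1$ at $((m'-i)+t,\,2t-1)$ and its right neighbor at $((m'-i)+t,\,2t)$, the latter having value $c_{m-2i}(t)$ (and value $C_t$ in the base case $i=m'$). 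The downward shift by $i$ rows then carries this zero and its neighbor to $(m'+t,\,2t-1)$ and $(m'+t,\,2t)$, positions independent of $i$ that coincide with the zero diagonal of $B(d_i)$ provided by Proposition 6.5. Hence every summand contributes the value $c_{m-2i}(t)$ at the common coordinate $(m'+t,2t)$, and summing against the weights $d_i$ yields $D_t=d_{m'+t,2t}=\sum_{i=0}^{m'}d_i\,c_{m-2i}(t)$.

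The routine parts are the sign bookkeeping in the decomposition and the reindexing of Proposition 8.10. The one point demanding genuine care is the simultaneous alignment of the zero diagonals: I must confirm that the shift by $i$ rows sends the zero diagonal of each component array onto the single diagonal $(m'+t,2t-1)$ of $B(d_i)$, so that the phrase ``the value to the right of the $t$-th zero'' refers to the entry at the same coordinate $(m'+t,2t)$ in every summand. Once this is checked, the theorem follows at once from linearity.
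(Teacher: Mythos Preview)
Your proposal is correct and follows exactly the approach sketched in the paper: decompose the skew-palindromic initial condition as $\sum_{i=0}^{m'}d_i\,x^i(1-x^{m-2i})$, invoke linearity of the binomial-array construction, and then read off each summand's contribution at $(m'+t,2t)$ via Proposition~8.10 after the $i$-row vertical shift. You have in fact supplied more detail than the paper---the explicit verification of the decomposition and the careful check that all the shifted zero diagonals align at the common coordinate---so nothing further is needed.
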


To implement this formula as a dot product, the left-hand side of Figure 8 is separated into rows by parity to give the table on the right-hand side.  Note that  the columns of the even rows are  the rows of Shapiro's Catalan triangle.

\begin{corollary} (a) For $t\ge 1$ and $m=2m'+1,$
\begin{equation}c_{m+1}(t+1)=\sum\limits_{i=0}^{m'}\ (-1)^i\, c_{m-2i}(t).
\end{equation} 
(b) For $t\ge 1$ and $m=2m'$,
\begin{equation}c_{m+1}(t)=(-1)^{m'}C_t+\sum\limits_{i=0}^{m'-1}\ (-1)^i\, c_{m-2i}(t).
\end{equation} 
\end{corollary}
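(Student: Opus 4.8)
The plan is to read both identities off Theorem 8.11 by dividing $1-x^{m+1}$ by $1+x$ and applying the near-zero decomposition to the quotient. Throughout, recall that the sequence $c_{m+1}$ on the left is the diagonal near-zero sequence of the aeration $B(1-x^{m+1})$ (Proposition 8.10), and that Theorem 8.11 evaluates the near-zero sequence of a skew-palindromic initial condition $B(d_i)$ as the weighted sum $\sum_i d_i\,c_{\deg-2i}(t)$ of aeration near-zeros. The right-hand sides are precisely such sums with alternating weights $d_i=(-1)^i$, so the whole task is to produce the correct initial condition by peeling off one factor of $1+x$.

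For part (a), with $m=2m'+1$ odd, the degree $m+1$ is even, so $1+x$ divides $1-x^{m+1}$ exactly and the quotient is $p(x)=\sum_{i=0}^m(-1)^i x^i$, a skew-palindromic polynomial of odd degree $m$. First I would apply Theorem 8.11 to $p$; since its coefficients are $d_i=(-1)^i$, this returns the near-zero sequence $\sum_{i=0}^{m'}(-1)^i c_{m-2i}(t)$, which is the right-hand side. Then, because $B(1-x^{m+1})=B^1B(p)$ is the single-column binomial transform of $B(p)$, its diagonal of zeros is that of $B(p)$ advanced by one column; reading the near-zero value across this shift identifies the sequence with $c_{m+1}$ and completes the case.

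For part (b), with $m=2m'$ even, $1+x$ no longer divides $1-x^{m+1}$, and the remainder is the source of the extra Catalan term. Here I would use the exact polynomial identity
\[
1-x^{m+1}=(1+x)\,q(x)+(-1)^{m'}x^{m'}(1-x),
\]
where $q(x)$ is the even-degree skew-palindromic polynomial whose lower-half coefficients are $(-1)^i$; that $q$ is a genuine polynomial follows from the numerator vanishing at $x=-1$. Taking the near-zero value of $B(1-x^{m+1})$ at the diagonal coordinate $(m'+t,2t)$ and splitting by linearity, the $(1+x)q$ summand is $B^1B(q)$ and contributes the even-aeration sum $\sum_{i=0}^{m'-1}(-1)^i c_{m-2i}(t)$ --- obtained by running the decomposition in the proof of Theorem 8.11 on $q$, now pairing $x^i$ with $x^{m-i}$ into the even aerations $1-x^{m-2i}$ --- while the correction term $(-1)^{m'}x^{m'}(1-x)$ is the Catalan array $B(1-x)$ lowered by $m'$ rows and evaluates to $(-1)^{m'}C_t$ at that coordinate. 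Summing the two contributions yields the right-hand side, while the left side of the identity is $c_{m+1}(t)$.

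The main obstacle is parity and index bookkeeping rather than any hard estimate. The delicate points are: establishing the displayed polynomial identity and confirming that the quotient $q$ carries exactly the alternating coefficients needed; tracking the one-column $B^1$ shift, which moves the diagonal of zeros and hence re-indexes $t$, so that both sides are compared at matching positions; and, in part (b), checking that the remainder evaluates to a bare Catalan number $C_t=c_1(t)$ (rather than a shifted $c_{2k}$) with the sign $(-1)^{m'}$. Should the symbolic shift be awkward to pin down directly, the clean fallback is to substitute the closed forms of Lemma 8.9 and Proposition 8.10 into each side and reduce the claim to a single binomial identity.
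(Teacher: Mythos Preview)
Your approach is essentially the same as the paper's. For (a), both you and the paper take $p(x)=\sum_{i=0}^m(-1)^ix^i$, observe $(1+x)p(x)=1-x^{m+1}$, and invoke Theorem 8.11; for (b), the paper writes $p(x)=\sum_{i=0}^{m'-1}(-1)^ix^i(1-x^{m-2i})$ and computes $(1+x)p(x)=1-(-1)^{m'}(x^{m'}-x^{m'+1})-x^{m+1}$, which is exactly your displayed identity $1-x^{m+1}=(1+x)q(x)+(-1)^{m'}x^{m'}(1-x)$ rearranged, with $q=p$. Your proposal is simply a more explicit version of the paper's terse ``and the result follows,'' and you are right to flag the $B^1$ column shift and the resulting re-indexing of $t$ as the one place requiring care.
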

\begin{proof}
(a)  Suppose $d_i=(-1)^i$ for $0 \le i\le m$ and zero otherwise.  If $p(x)$ corresponds to $d_i$, then $(1+x)p(x)=1-x^{m+1},$ and the result follows.

(b)  Suppose $p(x)=\sum\limits_{i=0}^{m'-1} (-1)^i x^i(1-x^{m-2i}).$ Then $$(1+x)p(x)=1-(-1)^{m'}(x^{m'} -x^{m'+1})-x^{m+1},$$ and the result follows.
\end{proof}

Of course, this corollary may be realized as a special case of the vertical short hockey stick rule (Proposition 2.5) for $B(1-x).$  Additionally, part (a) is the case of $C_t(m, m)$ when $m$ is odd;  compare with Theorem 8.8.

\section{Realization through representation theory of  $SL(2, F)$}

In this section, we recast Corollary 5.6 in terms of the finite-dimensional representation theory of $SL(2, F).$  Recall that $F$ is a field of characteristic zero; in general, we only consider irreducible representations of $SL(2, F)$ with a highest weight $n$.  When $F=\mathbb{R}$ or $\mathbb{C}$, see, for instance, the earlier chapters of \cite{Kn} for definitions and a convenient model (Chapter 2).

Define $M(2, F)$ to be the set of square matrices of size 2 with coefficients in $F$.  Let $GL(2, F)$ be the group of invertible matrices in $M(2, F)$ under matrix multiplication, and let $G=SL(2, F)$ be the subgroup of matrices with determinant 1.  Let $\mathfrak{g}_0=\mathfrak{sl}(2, F)$ be the Lie algebra of traceless matrices in $M(2, F)$ under the bracket $[X, Y]=XY-YX,$ and let $U(\mathfrak{g}_0)$ be the universal enveloping algebra of $\mathfrak{g}_0.$  We express the model of previous sections in terms of the representation theory of $SL(2, F)$ and $\mathfrak{sl}(2, F).$

We choose the usual basis for $\mathfrak{g}_0$ with
\begin{equation}e=\begin{bmatrix} 0 & 1\\ 0 & 0 \end{bmatrix},\quad  f=\begin{bmatrix} 0 & 0\\ 1 & 0 \end{bmatrix}, \quad h=\begin{bmatrix*}[r] 1 & 0\\ 0 & -1 \end{bmatrix*}.\end{equation}

Fix $n\ge 0,$ and let $(\pi_n, V(n))$ be the irreducible representation of $G$ with highest weight $n.$ When extending $\pi_n$ to elements of $U(\fg_0),$ we typically omit $\pi_n$ from the notation.
If we choose a highest weight vector $\phi$, then $V(n)$ has dimension $n+1$ with basis given by
\begin{equation}\mathcal{B}_n=\{ \phi, f\phi, \dots,  f^n\phi\}.\end{equation}
Up to scalar, $V(n)$ admits a unique non-degenerate invariant bilinear form $\langle\cdot, \cdot\rangle_n,$ and we fix this form by the normalization
\begin{equation*}\langle f^n\phi, \phi\rangle_n = 1. \end{equation*}
If $X$ is in $\fg_0,$  invariance implies that 
\begin{equation*}\langle Xu, v\rangle_n=\langle u, -Xv\rangle_n.\end{equation*}
Thus we have for all $0\le k, l\le n$,
\begin{equation}\langle f^{l} \phi, f^k\phi\rangle_n = \begin{cases}(-1)^k & l=n-k, \\ 0 & \text{otherwise}. \end{cases}\end{equation}
Now this form is symmetric when $n$ is even, alternating when $n$ is odd, and the dual basis to $\mathcal{B}_n$ with respect to $\langle\cdot, \cdot\rangle_n$ is given by
\begin{equation}\mathcal{B}'_n=  \{(-f)^n\phi,  \dots, (-f)\phi,  \phi\}.\end{equation}

Next let 
\begin{equation*}s=\begin{bmatrix*}[r]  1 & 0\\ 0 & -1\end{bmatrix*} \end{equation*}
and consider the involution $S: SL(2, F)\rightarrow SL(2, F)$  defined by
\begin{equation*} Sg=sgs^{-1}\quad\text{or}\quad S\begin{bmatrix}a & b\\ c & d\end{bmatrix} = \begin{bmatrix*}[r]a & -b\\ -c & d\end{bmatrix*}.\end{equation*}
The induced involution on $\mathfrak{sl}(2, F)$ is given by extending
\begin{equation*}e\mapsto -e, \quad f\mapsto -f, \quad h\mapsto h\end{equation*}
This involution induces an equivalent representation $(\pi'_n, V(n))$ by 
$$\pi'_n(g)v=\pi_n(Sg)v$$
with induced involution $S$ given on $V(n)$ by extending $S\phi=\phi$ and $f^k\phi\mapsto (-f)^k\phi.$
That is, we have an equivalence by $S\pi_n(g)=\pi'_n(g)S.$

Consider the form on $V(n)$ defined by 
\begin{equation*}\langle u, v\rangle'_n=\langle u, Sv\rangle_n.\end{equation*}
Now we have for all $0\le k, l\le n$,
\begin{equation}\langle f^{l} \phi, f^k\phi\rangle'_n = \begin{cases}1 & l=n-k, \\ 0 & \text{otherwise}, \end{cases}\end{equation}
and we shall see that this form implements the convolution pairing on polynomials.  It is no longer invariant under the action by $\pi_n$ but rather
\begin{align*}
\langle\pi_n(g)u, \pi'_n(g)v \rangle'_n &= \langle \pi_n(g)u, S\pi'_n(g)v\rangle_n\\
&=\langle \pi_n(g)u, \pi_n(g)Sv\rangle_n\\
&=\langle u, Sv\rangle_n\\
& = \langle u, v\rangle'_n.
\end{align*}

For $v$ in $V(n)$, define 
\begin{equation}B_f: V(n)\rightarrow V(n)\qquad  \text{by} \qquad B_fv=(1+f)v.\end{equation}
Since $\pi_n(f)$ is nilpotent, $B_f$ is invertible on $V(n),$ and 
\begin{equation}B_f^{-1}v = \frac{1}{1+f} v = (1 + (-f) + \dots + (-f)^n)v.\end{equation}
From above,
\begin{equation*}
\langle \pi_n(f)u, v \rangle'_n  = \langle u, \pi_n(f)v\rangle'_n.
\end{equation*}
Thus $B_f$ is symmetric with respect to $\langle\cdot, \cdot\rangle'_n$, and it follows immediately that
\begin{equation*}\langle B_fu, B^{-1}_fv\rangle'_n = \langle u, v\rangle'_n.\end{equation*}
If $p(X)$ and $q(X)$ are polynomials with coefficients in $F$ of degree less than or equal to $n$, we define
\begin{equation*}\langle\langle p(X), q(X)\rangle\rangle_n = \langle p(f)\phi, q(f)\phi\rangle'_n\end{equation*}
and reacquire
\begin{equation}\langle\langle B^1p(X), B^{-1}q(X)\rangle\rangle_n = \langle\langle p(X), q(X)\rangle\rangle_n = [X^n](p(X)q(X)).\end{equation}

\bibliographystyle{amsplain}

\end{document}